\numberwithin{equation}{section}
\numberwithin{figure}{section}
\theoremstyle{definition}
\newtheorem {definition}{Definition}
\theoremstyle{plain}
\newtheorem {theorem}{Theorem}
\numberwithin{theorem}{section}
\newtheorem {lemma}[theorem]{Lemma}
\theoremstyle{remark}
\newtheorem {remark}{Remark}
\begin{document}

\date{}

\title{\Large {\bf Combinatorial Seifert fibred spaces with transitive cyclic automorphism group}}

\author{Benjamin Burton and Jonathan Spreer}

\maketitle

\subsection*{\centering Abstract}

{\em
	In combinatorial topology we aim to triangulate manifolds such that their topological 
	properties are reflected in the combinatorial structure of their description.
	Here, we give a combinatorial criterion on when exactly 
	triangulations of $3$-manifolds with transitive cyclic symmetry 
	can be generalised to an infinite family
	of such triangulations with similarly strong combinatorial properties.

	In particular, we construct triangulations
	of Seifert fibred spaces with transitive cyclic symmetry where the symmetry
	preserves the fibres and acts non-trivially on the homology of the spaces.
	The triangulations include the
	Brieskorn homology spheres $\Sigma (p,q,r)$, 
	the lens spaces $\operatorname{L} (q,1)$ 
	and, as a limit case, $(\mathbf{S}^2 \times \mathbf{S}^1)^{\# (p-1)(q-1)}$.	
}\\
\\
\textbf{MSC 2010: } 
{\bf 57Q15};  
57N10; 
05B10; 
20B25; 

\medskip
\textbf{Keywords: } combinatorial topology, (transitive) combinatorial 3-manifold, Seifert fibred space, 
Brieskorn homology sphere, (transitive) permutation group, difference cycle, cyclic 4-polytope, cyclic group

\section{Introduction}
\label{sec:def}

It is the defining goal of combinatorial topology to establish links between the 
combinatorial structure of an object and its topology. Of course, this is not
possible in general since each individual topological object can usually be described
by a large and diverse class of different combinatorial objects, typically with very distinct properties.
Hence the question of how to choose a combinatorial structure which 
describes a topological object ``best'' is of critical importance.

If the right constraints are imposed on the combinatorial structure of an object,
topological properties become transparent which otherwise are hard to obtain. 
For instance a simplicial complex where every triple of vertices spans a triangle 
has to be simply connected \cite{Kuehnel95TightPolySubm}.

In other words, the right choice of combinatorial object makes the topology of 
a manifold combinatorially accessible.

\medskip
In ``non-combinatorial'' (conventional) $3$-manifold topology there are well established methods for describing
manifolds in ways that make their topological structure easily understandable.
One of these methods makes use of the fact that any closed oriented $3$-manifold 
can be obtained from the $3$-sphere by repeatedly applying {\it Dehn surgery}. 
Moreover, there is the standard 
JSJ decomposition \cite{Jaco78JSJDec,Johannson79JSJDec}
for prime $3$-manifolds
where {\em Seifert fibred spaces} come naturally out of the
construction. Seifert fibred spaces
are $3$-manifolds which are obtained by starting with a very restricted and 
well understood class of fibrations of the circle over a surface,
followed by performing surgery parallel to the fibres.

In the combinatorial setting we work with {\em combinatorial manifolds}
which are simplicial complexes with some additional properties.
As a result even the basic form of Dehn surgery needed to construct
Seifert fibred spaces introduces unwanted complexity
because gluing simplicial complexes together 
can require significant and sometimes unwieldy modifications. 
The \texttt{GAP}-script SEIFERT \cite{Lutz08ManifoldPage} by Lutz and Brehm constructs 
arbitrary combinatorial {\em Seifert fibred spaces}. However, due to the added complexity
of the gluings involved, the output complexes of the script 
are typically difficult to analyse.

In this article we aim to overcome this difficulty by explicitly constructing
combinatorial structures that reflect the topological 
properties of the objects we want to represent. More precisely, since Seifert fibrations
are unions of disjoint circles, we focus on combinatorial $3$-manifolds which,
in a certain sense, are invariant under rotations. In more combinatorial terms, 
we are interested in complexes with {\em transitive cyclic symmetries}; that is,
complexes with automorphism groups acting transitively on its vertices.

In addition to the philosophical compatibility of a rotational symmetry with $\mathbf{S}^1$-fibrations,
combinatorial $3$-manifolds with transitive cyclic symmetry have a number of other appealing
properties. They are globally determined by only a local 
neighbourhood, which means that the amount of data needed to describe them is much smaller
than the complex itself. Furthermore, they are easy to construct due to their transitive symmetry, 
and particularly easy to analyse due to the simplicity of the cyclic group. 
As a consequence, this type of combinatorial manifold has been a canonical choice 
for a good representative of the underlying topological manifold in the work of 
many authors over the past decades 
(for instance, see \cite{Brehm09LatticeTrigE33Torus,Kuehnel85NeighbComb3MfldsDihedralAutGroup,
Lutz03TrigMnfFewVertVertTrans,Lutz09EquivdCovTrigsSurf,Spreer14CyclicCombMflds}).

\medskip
In addition to constructing such combinatorial manifolds we are interested in making these
constructions compatible with Dehn surgery.
Of course, working exclusively with combinatorial $3$-manifolds with vertex transitive 
cyclic automorphism group implies even stronger restrictions to performing Dehn surgery than 
the restrictions already present in the general combinatorial setting. As a consequence, despite 
all research about combinatorial manifolds with transitive symmetry, there are only very 
few examples of combinatorial surgery preserving a given transitive cyclic symmetry. 

\begin{itemize}
	\item There is a $14$-vertex triangulation of the $3$-sphere containing two disjoint solid 
		$7$-vertex tori in form of one {\it difference cycle}, i.e., an orbit of the action of 
		the transitive cyclic automorphism group on the triangulation. This difference cycle can be replaced by 
		another difference cycle with equal boundary yielding a triangulation of 
    $\mathbf{S}^2 \times \mathbf{S}^1$ 
		and, in a slightly different setting, a triangulation of the lens space 
		$\operatorname{L} (3,1)$ \cite[Section 4.5.1]{Spreer10Diss}. 
	\item In \cite{Kuehnel85NeighbComb3MfldsDihedralAutGroup} K\"uhnel and Lassmann construct an 
		infinite family of neighbourly $3$-dimensional combinatorial $n$-vertex Klein bottles, 
		$n \geq 9$, using a special property of the boundary complex of the {\it cyclic 
		$4$-polytope} $C_4(n)$: By {\it Gale's evenness condition}, the boundary of the 
		$4$-dimensional cyclic polytope with $n$ vertices $\partial C_4(n)$, $n \geq 9$, 
		can be decomposed into two $n$-vertex solid tori $A(n)$ and $B(n)$. This yields a 
		{\em handlebody decomposition of genus one} of the combinatorial $3$-sphere $\partial C_4 (n)$ 
		respecting the transitive cyclic symmetry (cf. for example \cite[Section 5B]{Kuehnel95TightPolySubm}) 
		and hence provides an excellent starting point to perform Dehn surgery in a
		combinatorial setting with transitive symmetry.
	\item In \cite{Spreer14CyclicCombMflds} a related technique is used to construct a family of 
		infinitely many distinct lens spaces $L_k$: For every $k \geq 0$, a $14+4k$ vertex base complex is 
		glued to two solid tori, this way realising combinatorial surgery in infinitely many distinct ways.
\end{itemize}

We want to exploit the above constructions, and in particular the decomposition of $\partial C_4(n)$, to build 
Seifert fibred spaces where the combinatorics of the complex reflects the topological
structure of the fibration (i.e., combinatorial Seifert fibred spaces with transitive 
cyclic symmetry in which solid tori such as $A(n)$ and $B(n)$ can be plugged in to build neighbourhoods 
of the exceptional fibres).

For example, in the genus one handlebody decomposition of the boundary complex of the 
cyclic $4$-polytope $\partial C_4 (n) = A(n) \cup B(n)$ we replace $A(n)$ by another 
simplicial complex $\tilde{A}(n)$ with transitive cyclic symmetry and equal boundary. 
This gives rise to a closed complex in which $B(n)$ acts as an embedded 
solid torus where the gluing map depends on the number of vertices $n$ and 
the choice of a particular decomposition $\partial C_4 (n) = A(n) \cup B(n)$ 
(cf.\ parameter $l$ in Equations~(\ref{eq:decCycPolI}) and (\ref{eq:decCycPolII})). 

\medskip
Constructing such complexes is not trivial in general, but strongly depends on one 
of the key properties of combinatorial manifolds with transitive symmetry: these 
complexes are easy to find. In \cite{Spreer14CyclicCombMflds} there is a classification
of combinatorial $3$-manifolds with transitive cyclic symmetry up to $22$ vertices. 
Searching this classification for complexes containing a 
solid torus of type $B(n_0)$ (for a fixed $n_0 \leq 24$) resulted in a large number of 
candidates for families of Seifert fibred spaces (the complete list is available from 
the second author upon request). 

Our first main result Theorem~\ref{thm:main} essentially describes a setting where
a single example of ``combinatorial surgery'' can be {\em expanded} into an infinite
family of such examples. Using Theorem~\ref{thm:main}, the candidates above can then be 
checked for whether or not they allow such an expansion to an infinite family of combinatorial 
$3$-manifolds and hence into a candidate for a {\em family} of Seifert fibred 
spaces as described above.

\begin{theorem}
	\label{thm:main}
	Let $M$ be an $n$-vertex combinatorial $3$-manifold, $n$ even, given by $m$ difference 
	cycles $d_1$, $1\leq i \leq m$ and $(1 : n/2-1 : 1 n/2-1)$. Then for all $k \geq 0$, $M$ admits
	an expanded version $M_k$ with $n+k$ vertices if and only if each difference cycle $d_i$ 
	contains an entry greater or equal to $n/2$. If $M$ is neighbourly $M_k$ is neighbourly and vice versa for all $k \geq 0$.
\end{theorem}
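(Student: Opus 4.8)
The plan is to make the notion of an \emph{expanded version} explicit as a map $\varphi_k$ on difference cycles and then to verify, purely combinatorially, that $M_k=\varphi_k(M)$ is a closed combinatorial $3$-manifold exactly under the stated hypothesis. Concretely, I would let $\varphi_k$ send the central difference cycle $(1:\tfrac n2-1:1:\tfrac n2-1)$ to $(1:\lceil\tfrac{n+k}2\rceil-1:1:\lfloor\tfrac{n+k}2\rfloor-1)$, and send each generic difference cycle $d_i=(a_1:a_2:a_3:a_4)$ to the difference cycle on $n+k$ vertices obtained by adding $k$ to its unique largest entry (after a cyclic rotation of $d_i$ placing that entry in a fixed slot). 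Since a four-term difference cycle summing to $n$ has at most one entry $\ge n/2$, this largest entry is well defined precisely when the hypothesis holds. To check that $M_k$ is a combinatorial $3$-manifold I would use the standard reduction: $M_k$ is a closed combinatorial $3$-manifold iff no tetrahedron has a repeated vertex, every triangle lies in exactly two tetrahedra (closed pseudomanifold), and the link of each vertex is a combinatorial $2$-sphere; by the transitive $\mathbb{Z}_{n+k}$-symmetry the last condition need only be verified at the vertex $0$.

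For the sufficiency direction I would exploit \emph{locality}. If $d_i=(a_1:a_2:a_3:a_4)$ has $a_4\ge n/2$, then the remaining entries sum to $n-a_4\le n/2$, so every tetrahedron of $d_i$ has all four vertices inside an arc of length $\le n/2$ and involves only the short partial sums $a_1,\,a_1+a_2,\,a_1+a_2+a_3$. Enlarging the gap $a_4\mapsto a_4+k$ therefore neither identifies two vertices of a tetrahedron (ruling out degeneracy) nor alters the closed star contributed by $d_i$ to any vertex: read from the short end, these stars are literally identical in $M$ and $M_k$. Consequently the only part of the closed star of $0$ that moves under $\varphi_k$ comes from the central difference cycle, whose tetrahedra incident to $0$ I would track explicitly; growing its two long entries amounts to inserting vertices into the antipodal gap one at a time, a subdivision of the link that keeps it a $2$-sphere. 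Matching the triangle--tetrahedron incidences the same way---each $2$-face difference cycle is covered exactly twice and $\varphi_k$ shifts both coverings in lockstep---then yields the closed pseudomanifold condition, so $M_k$ is a combinatorial $3$-manifold.

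For necessity I would argue the contrapositive. If some $d_i$ has every entry $<n/2$, then no gap of $d_i$ exceeds a half-turn, so increasing the total by $k$ must lengthen a gap that vertex $0$ sees from both flanks, pushing vertices of the enlarged gap into the closed star of $0$ on both sides simultaneously; this breaks the triangle matching (some $2$-face ends up in one or three tetrahedra) or, once $k$ is large, forces a repeated vertex in a tetrahedron, so no valid $M_k$ exists. For the neighbourliness claim I would record that the edge lengths realized by $M$ split into short lengths, coming from the small entries and fixed by $\varphi_k$, and long lengths, coming from the enlarged gaps and merely shifted by $k$; comparing the two lists length by length shows the $1$-skeleton is complete for $M$ iff it is complete for $M_k$, for every $k\ge 0$.

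The main obstacle I expect is pinning down the exact equivalence between the manifold property of $M_k$ and the entry-$\ge n/2$ condition. The locality argument cleanly disposes of the generic difference cycles, but the central difference cycle $(1:\tfrac n2-1:1:\tfrac n2-1)$ has no entry $\ge n/2$ and spans the full diameter, so its contribution to the link of $0$ genuinely changes with $k$ and behaves differently according to the parity of $n+k$ (a symmetric half-orbit when $n+k$ is even, a full orbit when odd). Carrying the spherical-link and triangle-matching verification through this parity split, and proving that it is precisely the failure of some $d_i$ to possess a gap $\ge n/2$ that destroys the matching, is where the real work lies.
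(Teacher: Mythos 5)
Your locality observation for the generic difference cycles is exactly the paper's: since $d_i^0+d_i^1+d_i^2\leq n/2$, the star of vertex $0$ contributed by $d_i$ is literally unchanged when the long entry grows, and the paper likewise reduces necessity to a vertex collision at $d_i^0+d_i^1+d_i^2=n/2+k_0$. But there is a genuine gap at the very first step: your definition of the expansion is not the one the theorem is about, and under your definition the sufficiency direction is false. You send the central difference cycle $(1:\tfrac n2-1:1:\tfrac n2-1)$ to the \emph{single} difference cycle $(1:\lceil\tfrac{n+k}2\rceil-1:1:\lfloor\tfrac{n+k}2\rfloor-1)$. The paper's $M_k$ instead replaces it by the \emph{growing union}
$$\bigcup_{\ell=n/2}^{\lfloor (n+k)/2\rfloor}\bigl\{(1:\ell-1:1:n+k-\ell-1)\bigr\},$$
i.e.\ the solid torus $B(\tfrac n2-1,n+k)$ from the Gale decomposition of $\partial C_4(n+k)$; it is these $\approx k/2$ extra orbits that supply the $2k$ new triangles filling in the link of vertex $0$ between the old antipode and the new one. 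With only one stretched cycle that region is left uncovered: e.g.\ for $M=\partial C_4(8)=\{(1:1:1:5),(1:2:1:4),(1:3:1:3)\}$ and $k=2$, your $\varphi_2(M)=\{(1:1:1:7),(1:2:1:6),(1:4:1:4)\}$ is $\partial C_4(10)$ with the entire orbit $(1:3:1:5)$ deleted, which is not a closed $3$-manifold. So the step ``growing its two long entries amounts to \ldots a subdivision of the link that keeps it a $2$-sphere'' fails, and the same omission breaks your neighbourliness argument (the new vertices $\tfrac n2+1,\ldots$ enter the link of $0$ only via the inserted cyclic-polytope orbits).

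Two smaller points. First, your necessity argument is only a sketch; the paper makes it precise by relabelling the vertices as $-\tfrac n2+1,\ldots,\tfrac n2$ and observing that when $d_i^0+d_i^1+d_i^2=\tfrac n2+k_0$ the link vertex $-n+d_i^0+d_i^1+d_i^2$ coincides with the boundary/exterior vertex $\tfrac n2+k_0$ of the grey region, producing a singular vertex in $\operatorname{lk}(0)$ of $M_{k_0}$ --- you would need an equally concrete collision to rule out \emph{every} candidate expansion, not just your $\varphi_k$. Second, once the expansion is defined correctly the parity split you worry about is handled uniformly by the range $\ell\leq\lfloor\tfrac{n+k}2\rfloor$ (the last orbit is short exactly when $n+k$ is even), so that is not where the difficulty lies.
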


The above construction is made more precise and explained in detail in Section~\ref{sec:proof}. 

\medskip
Theorem~\ref{thm:main} describes families of combinatorial $3$-manifolds with transitive 
cyclic symmetry. In the course of this article we show that this construction
is suitable to find expansions of triangulated Seifert fibred spaces with multiple 
exceptional fibres where different levels of expansion, i.e., different values of $k$
in the above description, determine different types of exceptional fibres. This 
provides a more systematic approach for describing combinatorial surgeries like the ones
mentioned above (cf. \cite{Kuehnel85NeighbComb3MfldsDihedralAutGroup,Spreer10Diss,Spreer14CyclicCombMflds})
and allow more complex constructions. In particular, we present the following 
$3$-parameter family of triangulations of Seifert fibred spaces with an
unbounded number of exceptional fibres.

\begin{theorem}
	\label{thm:brieskornSpheres}
	There is a $3$-parameter family $\operatorname{M}(p,q,r)$, $2 \leq p < q$ co-prime, 
	$r > 0$, of combinatorial Seifert fibred spaces with $2pq+r$ vertices and
	transitive cyclic automorphism group of topological type 
	$$ \operatorname{SFS} [ (\mathbb{T}^2 )^{\# (a-1)(b-1)/2} : (-p/a,b_1)^b (q/b,b_2)^a (-r/ab ,b_3) ] $$
	where $(\mathbb{T}^2 )^{\# g}$ is the orientable surface of genus $g$,  
	$(x,y)^{\nu}$ denotes a set of $\nu$ exceptional fibres of type $(x,y)$,
	$a := \operatorname{gcd} (p,r)$, $b := \operatorname{gcd} (q,r)$, and
	$$ (\frac{b_1}{p} - \frac{b_2}{q} + \frac{b_3}{r}) \frac{pqr}{ab} = 1.$$
	The isomorphism type of the Seifert fibration is determined by these 
	conditions and, in particular, we have
	\begin{enumerate}[(i)]
		\item $\operatorname{M}(p,q,r)$ is the Brieskorn homology sphere $\Sigma (p,q,r)$ whenever $p$, $q$ and $r$ are co-prime,
		\item $\operatorname{M}(2,q,2)$ is the lens space $\operatorname{L}(q,1)$ and
	\end{enumerate}
	In the case $r=0$ we do not obtain Seifert fibred spaces but the manifolds $(\mathbf{S}^2 \times \mathbf{S}^1)^{\# (p-1)(q-1)}$.
\end{theorem}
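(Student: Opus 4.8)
The plan is to obtain the entire family as the expansion, in the sense of Theorem~\ref{thm:main}, of a single seed triangulation for each coprime pair $(p,q)$. Concretely, for $2 \le p < q$ coprime I would first write down an explicit $2pq$-vertex complex $\operatorname{M}(p,q,0)$ as a short list of difference cycles, arranged so that it contains the distinguished cycle $(1 : pq-1 : 1 : pq-1)$ and so that every remaining difference cycle has an entry $\ge pq = n/2$. Since $n = 2pq$ is even and the entry condition holds by design, Theorem~\ref{thm:main} then produces combinatorial $3$-manifolds $\operatorname{M}(p,q,r) := \operatorname{M}(p,q,0)_r$ on $2pq+r$ vertices for every $r \ge 0$, each carrying a transitive cyclic $\mathbb{Z}_{2pq+r}$-symmetry. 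This reduces the theorem to two tasks: identifying the topology of the seed, and understanding how the expansion introduces the fibration.

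For the seed I would confirm the combinatorial manifold property by the standard check that each vertex link is a combinatorial $2$-sphere, which by transitivity is a single finite computation on the star determined by the difference cycles. To identify the topological type of $\operatorname{M}(p,q,0)$ as $(\mathbf{S}^2 \times \mathbf{S}^1)^{\#(p-1)(q-1)}$ I would compute homology directly from the $\mathbb{Z}_{2pq}$-equivariant (hence block-circulant) chain complex, reading off $b_1 = (p-1)(q-1)$, and then exhibit an explicit system of reducing $2$-spheres splitting the complex into the asserted prime summands; this also records the nontrivial cyclic action on $H_1$ promised in the abstract, and is consistent with the degenerate-fibration picture below, where at $r=0$ one has $a=\gcd(p,0)=p$, $b=\gcd(q,0)=q$ and the third fibre becomes a reducing fibre of type $(0,b_3)$.

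The heart of the matter is the Seifert structure for $r>0$, which I would exhibit combinatorially. The fibre-preserving cyclic symmetry organises a distinguished set of cycles in the $1$-skeleton into the fibres of an $\mathbf{S}^1$-fibration; generic fibres correspond to regular orbits, while the short orbits forced by the divisibilities $a = \gcd(p,r)$ and $b = \gcd(q,r)$ produce exactly $b$ exceptional fibres of the first type, $a$ of the second, and one of the third. From the quotient orbifold I would read off the base surface and its genus $(a-1)(b-1)/2$, and from the framings with which the solid-torus neighbourhoods of the exceptional fibres are plugged in -- precisely the gluing data controlled by the expansion -- I would extract the Seifert invariants $(-p/a,b_1)$, $(q/b,b_2)$, $(-r/ab,b_3)$. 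The normalisation $(\tfrac{b_1}{p}-\tfrac{b_2}{q}+\tfrac{b_3}{r})\tfrac{pqr}{ab}=1$ I would then verify as the computation of the Euler number of the fibration, summing the local framing contributions.

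The special cases follow by specialisation. For $p,q,r$ pairwise coprime one has $a=b=1$, so the base is $\mathbf{S}^2$ with three exceptional fibres $(-p,b_1),(q,b_2),(-r,b_3)$ normalised by $(\tfrac{b_1}{p}-\tfrac{b_2}{q}+\tfrac{b_3}{r})pqr=1$, which is exactly the normalised Seifert datum of the Brieskorn sphere $\Sigma(p,q,r)$, giving (i); setting $p=r=2$ leaves only two genuine exceptional fibres and yields the standard genus-zero presentation of $\operatorname{L}(q,1)$, giving (ii); and the limit $r=0$ is handled by the homology identification of the seed, since the third fibre and with it the fibration degenerate. I expect the main obstacle to be this Seifert-invariant step: matching the combinatorial framing data of the plugged-in solid tori to the arithmetic parameters $b_1,b_2,b_3$ and verifying the Euler-number normalisation, together with correctly counting the base genus $(a-1)(b-1)/2$ when $a,b>1$ and several exceptional fibres share a type, where the orbit bookkeeping is most delicate.
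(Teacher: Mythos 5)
Your overall architecture (a base piece homeomorphic to $\mathbf{S}^1$ times a punctured surface, plugged-in solid tori for the exceptional fibres, an Euler-number computation for the normalisation, and specialisation for the corollaries) matches the paper's, but the proposal has two concrete problems. The first is your opening reduction: you propose to obtain the whole family as the Theorem~\ref{thm:main} expansion of a $2pq$-vertex seed $\operatorname{M}(p,q,0)$ all of whose non-distinguished difference cycles have an entry $\geq pq$. The actual seed cannot be arranged this way: the solid tori realising the exceptional fibres of the first two types are built from difference cycles of the form $(b_i : a_i : b_i : 2pq - 2b_i - a_i)$ with $a_1 = (p-k)q$, $b_1 = kq$ and $mp - kq = 1$, and for $i=1$ all four entries equal $kq$ or $(p-k)q$, each strictly less than $pq$ since $1 \leq k \leq p-1$. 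So the hypothesis of Theorem~\ref{thm:main} fails for every coprime pair, and the paper does not (and cannot) get the family this way; it verifies the combinatorial-manifold property of $\operatorname{M}(p,q,r)$ directly from the vertex link for each $r$ (Lemma~\ref{prop:Mpqr}). Relatedly, the existence statement of the theorem is carried entirely by the explicit construction -- the B\'ezout data $mp-kq=1$ and the Euclidean-algorithm sequence defining $\operatorname{F}_1$ and $\operatorname{F}_2$ -- which your proposal defers with ``write down an explicit complex arranged so that\dots''; that construction is the main content, not a preliminary.

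The second problem is the mechanism you propose for the Seifert structure. The fibres are not orbits of the transitive $\mathbb{Z}_{2pq+r}$-action (whose vertex orbits are everything), but edge-paths $\langle v, v+pq, v+2pq, \ldots \rangle$, i.e.\ orbits of the index-$ab$ subgroup $\langle pq \rangle$; these all have the same length $(2pq+r)/(ab)$, so there are no ``short orbits'' singling out the exceptional fibres. Instead, the counts $b$, $a$ and $1$ arise because $\operatorname{F}_1$ collapses onto the single difference cycle $(q:q:q:2pq+r-3q)$, which splits into $\gcd(q,r)=b$ connected components (Lemma~\ref{lem:fibresMerDiscs}), and similarly for $\operatorname{F}_2$; the multiplicities and framings $(-p/a,b_1)$, $(q/b,b_2)$, $(-r/ab,b_3)$ are then read off from explicit meridian-disc boundaries and from the gluing shifts $\alpha,\beta,\gamma$ of the fundamental domain (Lemmas~\ref{lem:Bpqr} and \ref{lem:bdryCurves}), not from a quotient orbifold of the symmetry group. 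You correctly flag the framing computation as the main obstacle, but the proposal offers no route to it, and the one step you do make precise (the expansion from $r=0$) is the step that fails. Your treatment of $r=0$ by reducing spheres is closest in spirit to the paper (Lemma~\ref{lem:r0} finds $2g$ non-separating spheres after a $(0,1)$-surgery on the trivial bundle), though homology plus the existence of reducing spheres alone does not yet pin down the summands as $\mathbf{S}^2 \times \mathbf{S}^1$.
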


We will see that the difference cycles of $\operatorname{M}(p,q,r)$ already reveal
where the fibres are running within the combinatorial manifold. 
Moreover, by the transitive cyclic symmetry
the analysis of the complexes can be done by only considering a fraction of the actual
complex and with the help of the tools of design theory.

\bigskip
The nice combinatorial structure of the complexes allow us to deduce 
further topological properties of the Seifert fibred spaces. Namely, we can show the following
two results.

\begin{theorem}
	\label{kor:HeegaardGenus}
	$\operatorname{M}(p,q,r)$ is of Heegaard genus at most $(p-1)(q-1)$.
\end{theorem}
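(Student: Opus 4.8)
The plan is to exhibit an explicit Heegaard splitting of $\operatorname{M}(p,q,r)$ of genus $(p-1)(q-1)$ directly from the combinatorial description, rather than invoking general bounds for Seifert fibred spaces. By Theorem~\ref{thm:brieskornSpheres} the underlying base surface is $(\mathbb{T}^2)^{\#(a-1)(b-1)/2}$, and the Seifert fibration itself already points to a natural handlebody decomposition: a regular neighbourhood of the union of the fibration's one-skeleton (a spine of the base, together with the exceptional fibres thickened to solid tori) forms one handlebody, and its complement forms the other. The first step is therefore to recall the standard fact that a Seifert fibred space over a surface of genus $g$ with $N$ exceptional fibres admits a Heegaard splitting whose genus is controlled by $g$ and $N$, and then to simplify this bound using the specific numerics of our family.

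First I would assemble the relevant invariants from the statement of Theorem~\ref{thm:brieskornSpheres}: the base genus is $(a-1)(b-1)/2$ and the total number of exceptional fibres is $b + a + 1$ (namely $b$ fibres of type $(-p/a,b_1)$, $a$ fibres of type $(q/b,b_2)$, and one fibre of type $(-r/ab,b_3)$), where $a=\gcd(p,r)$ and $b=\gcd(q,r)$. A standard vertical Heegaard splitting of a Seifert fibred space over a genus-$g$ surface with $N$ exceptional fibres has genus at most $2g+N-1$ when $N\ge 1$; substituting our values gives a candidate bound of $(a-1)(b-1)+(a+b+1)-1=(a-1)(b-1)+a+b=ab+1$. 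The second step is to reconcile this with the claimed bound $(p-1)(q-1)$, which suggests that the splitting visible in the combinatorics is more efficient than the generic one.

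The key observation I would use to close the gap is that the combinatorial manifold itself is built by the expansion procedure of Theorem~\ref{thm:main} from the handlebody decomposition $\partial C_4(n)=A(n)\cup B(n)$ of the boundary of the cyclic polytope. That decomposition is a genus-one splitting of the three-sphere, and the construction of $\operatorname{M}(p,q,r)$ replaces the pieces $A(n)$ by the transitively symmetric complexes $\tilde A(n)$ while retaining the solid-torus pieces $B(n)$ as neighbourhoods of exceptional fibres. Tracking how these solid tori and their complements glue up, and counting the independent handles that survive after the identifications forced by the fibration, should yield precisely $(p-1)(q-1)$ generators; indeed the limit case $r=0$ of Theorem~\ref{thm:brieskornSpheres} produces $(\mathbf{S}^2\times\mathbf{S}^1)^{\#(p-1)(q-1)}$, whose Heegaard genus is exactly $(p-1)(q-1)$, and this connected sum should be recoverable as a degeneration of the Heegaard surface, confirming that the bound is both valid and sharp in the limit.

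The main obstacle I anticipate is verifying that the genus of the combinatorially given splitting really collapses from the naive $ab+1$ down to $(p-1)(q-1)$. This requires carefully identifying which of the vertical annuli and exceptional-fibre neighbourhoods can be amalgamated or cancelled without raising the genus, and confirming that the $a+b$ fibres coming in parallel families (the $b$ fibres of one type and the $a$ of another) can be handled collectively rather than individually. I would attack this by following the fibres explicitly through the difference-cycle description, where Theorem~\ref{thm:brieskornSpheres} promises that the difference cycles reveal where the fibres run; making that bookkeeping precise, and checking consistency against the $r=0$ limit and the lens space case $\operatorname{M}(2,q,2)=\operatorname{L}(q,1)$ (genus one), is where the real work lies.
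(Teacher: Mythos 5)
Your route is genuinely different from the paper's, but as written it has a concrete gap that you yourself flag and do not close. The bound you extract from the standard vertical Heegaard splitting is $2g+N-1$ with $g=(a-1)(b-1)/2$ and $N=a+b+1$, i.e.\ $ab+1$. This does not reduce to $(p-1)(q-1)$; worse, in the case $a=p$, $b=q$ (i.e.\ $r\equiv 0 \bmod pq$) it gives $pq+1$, which strictly exceeds the claimed bound $pq-p-q+1$ --- and that is precisely the case where the paper asserts the bound $(p-1)(q-1)$ is attained (there $H_1=\mathbb{Z}^{(p-1)(q-1)}$ forces equality). So the discrepancy is not a bookkeeping artefact that ``amalgamation'' will automatically fix: you would have to observe that in that regime the $a+b$ fibres of type $(\pm 1,\cdot)$ are not genuinely exceptional and can be absorbed into the base, recount $N$ accordingly, handle the $N=0$ versus $N\geq 1$ dichotomy for vertical splittings, and then still verify the remaining cases where $1<a<p$ or $1<b<q$. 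None of this is carried out; the paragraph that is supposed to close the gap appeals vaguely to the cyclic-polytope decomposition and to ``counting the independent handles that survive,'' which is not an argument. Your approach also leans on the full strength of Theorem~\ref{thm:brieskornSpheres} (the identification of the Seifert invariants), which is the hardest result in the paper.

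The paper's proof is entirely different and self-contained: it is a discrete Morse theory argument. Theorem~\ref{prop:MorseFunc} shows that the explicit rsl-function $f(v)=v/(2pq+r-1)$ has exactly one critical point of index $0$, one of index $3$, and $(p-1)(q-1)$ each of index $1$ and $2$, by analysing the connectivity of $\operatorname{span}_{\{-v,\ldots,-1\}}(\operatorname{lk}(0))$ via Lemmas~\ref{lem:critIdx1}--\ref{lem:connComplex}. The two sublevel complexes $B_-$ and $B_+$ are then (neighbourhoods of) handlebodies of genus $(p-1)(q-1)$, giving the Heegaard splitting directly, with no reference to the Seifert structure. If you want to salvage your approach you would need to make the exceptional-fibre count honest and cite (or prove) the precise genus of vertical splittings; as it stands the proposal does not establish the stated bound.
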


\begin{samepage}
\begin{theorem}
	\label{thm:actionAutGroup}
	The automorphism group
	$$G \quad := \quad  \operatorname{Aut}(\operatorname{M}(2,q,2kq)) \quad \cong \quad \mathbb{Z}_{2q(k+2)} , $$
	$q$ prime, $k \geq 0$, acts on the first homology group
	$H_1 (\operatorname{M}(2,q,2kq),\mathbb{Z}) = \mathbb{Z}^{q-1}$ by
	$$ \rho : \quad G \quad \to \quad \operatorname{SL} (q-1,\mathbb{Z}) ; \quad \quad g \quad \mapsto \quad
		\begin{pmatrix} 
			0 & \cdots  & \cdots & 0 & -1 \\
			1 & 0 & \cdots & 0 & 1 \\
			0 & \ddots & \ddots & \vdots & \vdots \\
			\vdots & \ddots & \ddots & 0 & -1 \\
			0 & \cdots & 0 & 1 & 1 \\
		\end{pmatrix}^{g}$$
	where $|\rho(G)| = 2q$.
\end{theorem}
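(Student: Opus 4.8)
Denote by $A$ the $(q-1)\times(q-1)$ integer matrix displayed in the statement, so that $\rho(g)=A^{g}$. The plan is to recognise $A$ as a companion matrix and then to compute the $\mathbb{Z}[G]$-module structure of $H_1$ directly from the difference cycles of $\operatorname{M}(2,q,2kq)$. First I record the algebraic fact that, since $q$ is an odd prime (recall $2=p<q$), $A$ is precisely the companion matrix of the $2q$-th cyclotomic polynomial $\Phi_{2q}(x)=1-x+x^{2}-\cdots+x^{q-1}$, whose degree is $\varphi(2q)=q-1$. Hence $A$ acts on $\mathbb{Z}^{q-1}\cong\mathbb{Z}[x]/(\Phi_{2q}(x))=\mathbb{Z}[\zeta_{2q}]$ as multiplication by a primitive $2q$-th root of unity; in particular $\det A=(-1)^{q-1}\Phi_{2q}(0)=1$, so $A\in\operatorname{SL}(q-1,\mathbb{Z})$, and $A$ has multiplicative order exactly $2q$. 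This already forces $|\rho(G)|=2q$ and shows $\ker\rho=\langle\sigma^{2q}\rangle$ has order $k+2$, where $\sigma\colon i\mapsto i+1$ generates $G$. It therefore remains to exhibit a $\mathbb{Z}$-basis of $H_1$ in which $\sigma_{*}$ is this companion matrix.

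The claim $G\cong\mathbb{Z}_{2q(k+2)}$ itself follows from Theorem~\ref{thm:brieskornSpheres}: the complex has $2pq+r=2q(k+2)$ vertices and carries a transitive cyclic action, giving $\mathbb{Z}_{2q(k+2)}\le G$, and equality I would obtain by checking that the vertex stabiliser is trivial, i.e.\ that the set of difference cycles admits no nontrivial affine relabelling fixing a vertex---this is the point at which primality of $q$ is used to exclude spurious symmetries.

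The heart of the argument is the homology computation. Since the triangulation is a union of difference cycles, each simplicial chain group $C_{j}$ is a free module over the group ring $R=\mathbb{Z}[G]=\mathbb{Z}[t]/(t^{N}-1)$, $N=2q(k+2)$, with one free generator per difference cycle and with $\sigma_{*}$ acting as multiplication by $t$. I would read the boundary operators $\partial_{2},\partial_{1}$ off the vertex differences of each cycle as explicit matrices over $R$ and then compute $H_{1}=\ker\partial_{1}/\operatorname{im}\partial_{2}$ as an $R$-module. Two structural inputs pin the answer down. First, $H_{1}=\mathbb{Z}^{q-1}$ as a group comes from Theorem~\ref{thm:brieskornSpheres} (and is already transparent at $k=0$, where $\operatorname{M}(2,q,0)=(\mathbf{S}^{2}\times\mathbf{S}^{1})^{\#(q-1)}$). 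Second, the fibre-preserving subgroup $\langle\sigma^{2q}\rangle$ is a rotation along the $\mathbf{S}^{1}$-fibres and hence isotopic to the identity, so it acts trivially on $H_{1}$; thus $t$ acts through the quotient $\mathbb{Z}_{2q}$, with order exactly $2q$. An order-$2q$ automorphism of a lattice of rank $q-1=\varphi(2q)$ whose induced action on the base orbifold has no fixed homology class must have characteristic (and minimal) polynomial $\Phi_{2q}$---among the divisors $1,2,q,2q$ of $2q$, only $\Phi_{2q}$ both contributes order $2q$ and fills degree $q-1$ without the fixed-class factor $\Phi_1$---so $H_{1}\cong\mathbb{Z}[t]/(\Phi_{2q}(t))$ as an $R$-module.

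Finally, in the power basis $1,t,t^{2},\dots,t^{q-2}$ of $\mathbb{Z}[t]/(\Phi_{2q}(t))$, multiplication by $t$---that is, $\sigma_{*}$---is by definition the companion matrix $A$, and raising to the $g$-th power yields $\rho(g)=A^{g}$ as stated. The step I expect to be the main obstacle is the explicit module computation: extracting $\ker\partial_{1}/\operatorname{im}\partial_{2}$ from the difference-cycle boundary matrices over $R$ and verifying that the presentation collapses to $\mathbb{Z}[t]/(\Phi_{2q}(t))$ with no residual torsion or rank. Rather than attacking the full $N$-vertex chain complex, the cleaner route is probably to quotient out the regular-fibre class first---using the $k=0$ model $(\mathbf{S}^{2}\times\mathbf{S}^{1})^{\#(q-1)}$ as a homological template for the base---and then diagonalise the residual order-$2q$ cyclic action to read off $\Phi_{2q}$.
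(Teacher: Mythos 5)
Your framing is right---$A$ is the companion matrix of $\Phi_{2q}$, the kernel of $\rho$ is $\langle\sigma^{2q}\rangle$ (which the paper gets combinatorially from Lemma~\ref{lem:homology}, $c\simeq c+pq$, rather than from an isotopy-along-the-fibres argument), and the target module is $\mathbb{Z}[t]/(\Phi_{2q}(t))$. But the proof has a genuine gap exactly where you flag ``the main obstacle'': nothing in your argument establishes that the order of $\rho(\sigma)$ on $H_1$ is \emph{exactly} $2q$ rather than a proper divisor. Knowing only that $\sigma^{2q}$ acts trivially, the characteristic polynomial is a degree-$(q-1)$ product of cyclotomics $\Phi_d$ with $d\mid 2q$, and $\Phi_q$ fits the degree constraint just as well as $\Phi_{2q}$; neither has $1$ as a root, so ``no fixed homology class'' does not discriminate between them. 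If the action had order $q$ the matrix would be the companion matrix of $\Phi_q$ (last column all $-1$), a different theorem. The paper settles this by brute force: it takes the explicit basis $a_{v-1}=\langle v,v-1,v-2,v\rangle$, $2\le v\le q$, observes $g\cdot a_i=a_{i+1}$ for the first $q-2$ basis elements, and then performs an explicit chain of homotopies across triangles of the difference cycles to show $g\cdot a_{q-1}$ equals the \emph{alternating} product $a_2^{-1}a_3a_4^{-1}\cdots a_{q-2}^{-1}a_{q-1}$. Those alternating signs are precisely the data that select $\Phi_{2q}$ over $\Phi_q$, and they do not follow from the soft structural facts you invoke.

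A secondary point: even granting that the characteristic polynomial is $\Phi_{2q}$, passing to ``the power basis of $\mathbb{Z}[t]/(\Phi_{2q}(t))$'' presumes $H_1$ is a \emph{free} rank-one $\mathbb{Z}[\zeta_{2q}]$-module; integral conjugacy classes of matrices with characteristic polynomial $\Phi_{2q}$ correspond to ideal classes of $\mathbb{Z}[\zeta_{2q}]$, which need not be trivial. The paper sidesteps this by exhibiting the cyclic generator directly ($a_1$, whose $\sigma$-orbit is the basis). Your proposed route through the $\mathbb{Z}[G]$-module presentation $\ker\partial_1/\operatorname{im}\partial_2$ could in principle deliver both missing pieces, but as written it is a plan rather than a proof, and the decisive computation is exactly the part left undone.
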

\end{samepage}

For $p$ and $q$ fixed Theorem~\ref{kor:HeegaardGenus} gives us infinite families of $3$-manifolds of 
bounded Heegaard genus. This is interesting, as bounds for the Heegaard genus of a $3$-manifold are 
usually hard to obtain in a purely combinatorial setting. Moreover, we show that this bound is tight
whenever $r \equiv 0 \mod pq$ and for $(p,q,r)=(2,3,\geq 3)$.

\medskip
Theorem~\ref{thm:actionAutGroup} describes an interplay between the automorphism group
of $\operatorname{M}(2,q,2kq)$ for $q$ prime (a combinatorial object) and its 
first homology group (a topological invariant). Intuitively,
a combinatorial manifold should be presented in a way such that 
any symmetry of the combinatorial structure is meaningful
for the topological object. For example for the $d$-dimensional torus 
$$\mathbb{T}^d = \mathbf{S}^1 \times \ldots \times \mathbf{S}^1 $$
we would like to have a triangulation where each symmetry
of the combinatorial object permutes the $\mathbf{S}^1$-components, for a connected
sum of manifolds
$$ M^{\# k} = M \# \ldots \# M $$
we would like the symmetries to act on the direct summands, and so on.

In more general terms, if for a combinatorial manifold $M$ the first homology group 
$H_1 (M,R)$ is a free $R$-module of rank $k$, we would like to have a non-trivial 
representation of the automorphism group $\operatorname{Aut} (M)$ of the form
\begin{equation}
	\label{eq:rho}
	\rho : \operatorname{Aut}(M) \to \operatorname{SL} (k,R) .
\end{equation}

However, as of today, few examples are known where such a non-trivial 
representation exist. Theorem \ref{thm:actionAutGroup} describes an infinite family of
further examples using the complex $\operatorname{M}(2,q,2kq)$ in the case that $q$ is a prime.

\bigskip
Finally, there are many more interesting families of Seifert fibred spaces and using Theorem~\ref{thm:main} more can be found. However, the question whether or not this construction principle is suitable to obtain a significant proportion of {\em all} Seifert fibred spaces with a similar degree of impact of the topology on the combinatorics is unanswered as of today and subject to work in progress.

\section{Preliminaries}
\label{sec:prelim}

\subsection{$3$-manifolds and Seifert fibred spaces}

By work of Moise \cite{Moise51TrigOf3Mflds} it follows that every topological $3$-manifold admits a unique piecewise linear and smooth structure and hence all $3$-dimensional manifolds can be triangulated. As a corollary, it follows that every $3$-manifold $M$ can be decomposed into two {\em handlebodies}, i.e., thickened graphs, which are joined along their boundary surface in order to give $M$. The genus of the boundary surface is said to be the {\em genus of the handlebody decomposition of $M$} and the minimum genus over all handlebody decompositions of $M$ is called the {\em Heegaard genus} of $M$. A modification of this construction results in the observation that every $3$-manifold $M$ can be constructed from the $3$-sphere, by drilling out solid tori and gluing them back such that the meridian of the old solid torus in $M$ is identified with a torus knot of type $(p,q)$ on the boundary of the new solid torus. Such a drilling operation is called {\em Dehn surgery} of type $(p,q)$ (see \cite[Theorem 12.14]{Lickorish} for more about Dehn surgery).

$3$-manifolds can be uniquely decomposed into a connected sum of so-called {\em prime $3$-manifolds} which cannot be represented as a non-trivial connected sum. One important class of prime $3$-manifolds can be described as a fibration of the circle over a $2$-dimensional base orbifold with a finite number of additional Dehn surgeries performed along thickened fibres (note that a thickened fibre is a solid torus). Such a representation is called a {\em Seifert fibred space} and is determined by the base surface, the type of the fibration and a list of (rational) Dehn surgeries along the fibres each specified by a pair of co-prime integers (see \cite{Orlik72SeifertMflds} for more about Seifert fibrations).

\subsection{Combinatorial manifolds}

We can represent manifolds in a purely combinatorial piecewise linear fashion 
using simplicial complexes. For each vertex $v$ in a simplicial complex $C$ we 
refer to the {\em link} of $v$ as the boundary of its simplicial neighbourhood, 
i.e., in the set of all simplices containing $v$ the set of proper faces not 
containing $v$. A {\em combinatorial $d$-manifold} is a pure and abstract 
$d$-dimensional simplicial complex such that each vertex link is a triangulated 
$(d-1)$-sphere with the standard piecewise linear structure. If, in a 
simplicial complex, the link of a vertex $v$ is {\em not} a triangulated 
$(d-1)$-sphere with the standard piecewise linear structure, $v$ is referred 
to as a {\em singular vertex}. A combinatorial $d$-manifold is said to be 
{\em neighbourly}, if the underlying simplicial complex contains all possible 
${n \choose 2}$ edges where $n$ is the number of vertices. A combinatorial 
$d$-manifold always has a piecewise linear structure induced by the simplicial 
complex. In general, this is not true for simplicial complexes homeomorphic to 
a manifold (so-called {\em triangulations of manifolds}) as illustrated by a 
triangulation of Edward's sphere in dimension $5$ in 
\cite{Bjoerner00SimplMnfBistellarFlips}. Hence using the notion of a 
combinatorial manifold is necessary if we want to work with piecewise linear 
manifolds.

However, in dimension $3$ things are simpler -- any two triangulations of the 
same $3$-manifold are equivalent and induce a unique piecewise linear structure 
by Moise's theorem \cite{Moise51TrigOf3Mflds} (cf.\ above), and every 
triangulated $3$-manifold is automatically a combinatorial $3$-manifold. 

In the following sections, we refer to combinatorial $3$-manifolds which are 
homeomorphic to Seifert fibred spaces as {\em combinatorial Seifert fibred 
spaces}.

\subsection{Automorphism groups and difference cycles}

Any abstract simplicial complex and hence any combinatorial manifold $M$ can 
be seen as a combinatorial structure consisting of tuples of elements of 
$V = \{ 0, 1, \ldots , n-1 \}$ where each element of $V$ appears in at least 
one tuple. The elements of $V$ are referred to as the vertices of the complex. 

The {\em automorphism group} $\operatorname{Aut}(M)$ of $M$ is the group 
of all permutations $\sigma \in S_n$ of the vertices of $M$ which do not change 
the complex $M$ as a whole. If $\operatorname{Aut}(M)$ acts transitively on the 
vertices, i.e., if for any pair of vertices $u$ and $v$ there is an automorphism 
$\sigma \in \operatorname{Aut}(M)$ such that $\sigma \cdot u = v$, $M$ is 
called a {\em combinatorial manifold with transitive automorphism group} or 
just a {\em transitive combinatorial manifold}. If a transitive combinatorial 
manifold is invariant under the cyclic $\mathbb{Z}_n$-action 
$v \mapsto v+1 \mod n$ (i.e., if for a combinatorial manifold $M$, possibly 
after a relabelling of the vertices, 
$\mathbb{Z}_n = \langle ( 0, 1, \ldots , n-1 ) \rangle$ is a subgroup of 
$\operatorname{Aut}(M)$), then $M$ is called a {\em cyclic combinatorial 
manifold} (here $\langle ( a , b, c, \ldots ) \rangle$ denotes the permutation 
group generated by the permutation $( a,b,c, \ldots)$ given in cycle notation).

For cyclic combinatorial manifolds we have the following special situation: 
Since the entire complex does not change under a vertex-shift of type 
$v \mapsto v+1 \mod n$, two tuples are in one orbit of the cyclic group 
action if and only if the differences modulo $n$ of its vertices are equal. 
Hence we can compute a system of orbit representatives by just looking 
at the differences modulo $n$ of the vertices of all tuples of the 
combinatorial manifold (cf.\ \cite{Kuehnel96PermDiffCyc}). This motivates the 
following definition.

\begin{definition}[Difference cycle]
	\label{def:diffCycle}
	Let $a_i$, $0 \leq i \leq d$, be positive integers, 
  $ n := \sum_{i=0}^{d} a_i$ and $\mathbb{Z}_n = \langle (0,1, \ldots , n-1) 
  \rangle$. The simplicial complex
	\begin{equation*}
		( a_0 : \ldots : a_{d} ) := \mathbb{Z}_n \cdot \langle 0 , a_0 , \ldots , 
  \Sigma_{i=0}^{d-1} a_i \rangle
	\end{equation*}
	is called a {\em difference cycle of dimension $d$ on $n$ vertices} where 
  $\mathbb{Z}_n \cdot \langle \cdot \rangle$ denotes the $\mathbb{Z}_n$-orbit 
  of $\langle \cdot \rangle$. The number of elements of $(a_0 : \ldots : a_d)$ 
  is referred to as the {\em length} of the difference cycle.
	If a simplicial complex $C$ is a union of difference cycles of dimension 
  $d$ on $n$ vertices and $\lambda$ is a unit of $\mathbb{Z}_n$ such that the 
  complex $\lambda C$ (obtained by multiplying all vertex labels by $\lambda$ 
  modulo $n$) equals $C$, then $\lambda$ is called a {\em multiplier} of $C$.
\end{definition}

Note that for any unit $\lambda \in \mathbb{Z}_n^{\times}$, the complex 
$\lambda C$ is combinatorially isomorphic to $C$. In particular, all 
$\lambda \in \mathbb{Z}_n^{\times}$ are multipliers of the complex 
$\bigcup_{\lambda \in \mathbb{Z}_n^{\times}} \lambda C$ by construction. 
The definition of a difference cycle above is equivalent to the one given in 
\cite{Kuehnel96PermDiffCyc}. 

Throughout this article, we describe {\em cyclic combinatorial manifolds} as a 
set of difference cycles with the implication that we take the union of the 
difference cycles to describe the simplicial complex. In this way, many 
problems dealing with cyclic combinatorial manifolds can be solved in an 
elegant way.

\subsection{Cyclic polytopes and combinatorial exceptional fibres}

The family of {\em cyclic polytopes} is a two parameter family $C_d(n)$ of convex simplicial $d$-polytopes given by the convex hull of $n$ distinct points on the momentum curve 
	$$ \mathbf{x} : \mathbb{R} \to \mathbb{R}^d ; \quad t \mapsto (t,t^2, \ldots , t^d)^T.$$
Cyclic polytopes were first described by Carath\'eodory at the beginning of the $20$th century \cite{Caratheodory07CyclicPolytopes} and have played an important role in polytope theory and combinatorics ever since.

A remarkable property of cyclic polytopes is that their combinatorial structure can be described by {\em Gale's evenness condition} \cite{Gale63EvenessCondition}. Labelling the vertices of the polytope $C_d(n)$ by the integers $V(C_d(n))~=~\{ 0, 1, \ldots , n-1 \}$ for increasing $t$, a $d$-tuple $\Delta \subset V(C_d(n))$ is a facet of $C_d(n)$ if and only if all pairs of vertices in the complement $V(C_d(n)) \setminus \Delta$ are separated by an even number of vertices in $\Delta$.

This has the following consequence in even dimensions $2m$. A $2m$-tuple $\Delta := \langle a_0, \ldots , a_{2m-1} \rangle$ is a facet of $C_{2m}(n)$ if and only if $\Delta + l~:=~\langle a_0 + l \!\!\!\mod n, \ldots , a_{2m-1}  + l \!\!\!\mod n \rangle$ is a facet of $C_{2m}(n)$ for all $l \geq 0$. Hence $C_{2m}(n)$ has an automorphism group $\operatorname{Aut}(C_{2m}(n))$ containing $\mathbb{Z}_n~=~\langle ( 0, 1, \ldots , n-1 ) \rangle$ as a subgroup acting transitively on the vertices. To see this, shift the labels of $\Delta$ and of an arbitrary pair of vertices $\{ v_1, v_2 \} \subset V(C_{2m}(n)) \setminus \Delta$, $v_1 < v_2$, by $n-v_2$. Since $\Delta$ contains an even number of vertices and $\{ v_1, v_2 \}$ is arbitrary, $\Delta + (n-v_2)$ satisfies Gale's evenness condition if and only if $\Delta$ satisfied Gale's evenness condition.

\medskip
By Gale's evenness condition, the vertex labels of a facet of $C_{2m}(n)$ split into sequences
	$$l , (l+1) \!\!\!\mod n, (l+2) \!\!\!\mod n, (l+3) \!\!\!\mod n, \ldots$$
of even length. Consequently, a difference cycle $D$ is contained in $C_{2m}(n)$ if and only if $D$ can be written as a concatenation of sequences of consecutive $1$-entries of odd length followed by a single difference greater than $1$. In the case $2m = 4$, the observations above give rise to the following way to describe $\partial C_4(n)$.
	\begin{equation}
		\label{eq:partialc4n}
		\partial C_4(n) := \bigcup \limits_{i = 1}^{\lfloor \frac{n}{2} \rfloor} \{ (1:i:1:n-2-i) \}.
	\end{equation}
Note that in Equation~(\ref{eq:partialc4n}) all $3$-dimensional difference cycles consisting of sequences of $1$-entries of odd length followed by single entries greater than $1$ are listed. From the viewpoint of $3$-manifold theory this description reveals another interesting property. By a simple collapsing argument we can see that
\begin{equation}
	\label{eq:decCycPolI}
	A(l,n) := \bigcup \limits_{i = 1}^{l} \{ (1:i:1:n-2-i) \}
\end{equation}
as well as
\begin{equation}
	\label{eq:decCycPolII}
	B(l,n) := \bigcup \limits_{i = l+1}^{\lfloor \frac{n}{2} \rfloor} \{ (1:i:1:n-2-i) \}
\end{equation}
are triangulated solid tori for all 
$1 \leq l \leq \lfloor \frac{n-1}{2} \rfloor -1$, thus yielding a 
{\em handlebody decomposition of genus one} of the combinatorial $3$-sphere 
$\partial C_4 (n)$ respecting its transitive cyclic symmetry (cf. for example 
\cite[Section 5B]{Kuehnel95TightPolySubm}). Solid tori like $A(l,n)$, $B(l,n)$ 
and related constructions provide families of distinct pairs of solid tori 
with equal boundary and thus provide an excellent set of starting points to 
perform Dehn surgery in a combinatorial setting. For this reason we refer to 
them as {\em combinatorial exceptional fibres}.

\subsection{rsl-functions}
\label{sec:rslfunc}

One of the principal tools to analyse combinatorial manifolds is the use of a discrete Morse type theory following Kuiper, Banchoff and 
K\"uhnel \cite{Banchoff67CritPntCurvEmbPoly,Banchoff83CritPtsCurvEmbPolyhII,Kuehnel95TightPolySubm,Kuiper71MorseRelations}. In this theory, the discrete analogue of a Morse function is given by a mapping from the set of vertices $V$ of a combinatorial manifold $M$ to the real numbers $\mathbb{R}$ such that no two vertices have the same image, in this way inducing a total ordering on $V$. This mapping can then be extended to a function $ f: M \to \mathbb{R} $  by linearly interpolating the values of the vertices of a face of $M$ for all points inside that face. $f$ is called a {\em regular simplexwise linear function} or {\em rsl-function} on $M$.

A point $x \in M$ is said to be {\em critical} for an rsl-function $f:M \to \mathbb{R}$ if \[H_{\star} (M_x , M_x \backslash \{ x \} , \mathbb{F}) \neq 0 \] where $M_x := \{ y \in M \, | \, f(y) \leq f(x) \}$ and $\mathbb{F}$ is a field. Here, $H_{\star}$ denotes simplicial homology. It follows that no point of $M$ can be critical except possibly the vertices, also, in contrast to classical Morse theory, a point can be critical of multiple indices and with higher multiplicity. More precisely we call a vertex $v$ {\em critical of index $i$ and multiplicity $m$} if $\beta_i (M_v , M_v \backslash \{ v \} , \mathbb{F}) = m$. 

A result of Kuiper \cite{Kuiper71MorseRelations} states that the number of critical points of an rsl-function of $M$ counted by multiplicity is an upper bound for the sum of the Betti numbers of $M$, hence extending the famous Morse relations from the smooth theory to the discrete case. In addition, like in the smooth case the alternating sum over the critical points of index $i$ of {\em any} rsl-function of $M$ counted by multiplicity equals the alternating sum over the Betti numbers of $M$ and thus the Euler characteristic of $M$.

\section{Proof of Theorem \ref{thm:main}}
\label{sec:proof}

Theorem \ref{thm:main} gives a purely combinatorial criterion for when a given 
cyclic combinatorial $3$-manifold can be expanded to an infinite family of 
combinatorial $3$-manifolds and hence to a candidate for a family of 
combinatorial Seifert fibred spaces (of distinct topological types). For 
similar (but different) results about cyclic combinatorial manifolds see 
Theorem 3.1 and Theorem 3.7 in \cite{Spreer14CyclicCombMflds}.

\medskip
Before we proof Theorem~\ref{thm:main} we first introduce some notation to 
make the statement of the theorem more precise:
Let $d_i = (d_i^0 : d_i^1 : d_i^2 : d_i^3)$, $1 \leq i \leq m$, be difference cycles with $n$ 
vertices, $n$ even, where w.l.o.g. $d_i^3 \geq d_i^j$ for all $0 \leq j \leq 2$, $1 \leq i \leq m$, 
and let $d_{i,k}$, $1 \leq i \leq m$, $k \geq 0$, be difference cycles with $n+k$ vertices given by 
$d_{i,k} = (d_i^0 : d_i^1 : d_i^2 : d_i^3+k)$.

\medskip 
Then the $n$-vertex combinatorial $3$-manifold $M$ is given by
	$$M = \left \{ d_1 , \ldots , d_m, \left (1:\frac{n}{2}-1:1:\frac{n}{2}-1 \right ) \right \}.$$ 
Now Theorem~\ref{thm:main} states that for all $k \geq 0$ the combinatorial manifold $M$ has an $n+k$-vertex expansion, noted as 
	$$ M_k = \left \{ d_{1,k} , \ldots , d_{m,k} \right \} \bigcup \limits_{\ell = \frac{n}{2}}^{\lfloor \frac{n+k}{2} \rfloor} \left \{ (1:\ell-1:1:n-\ell-1) \right \},$$
if and only if $d_i^0 + d_i^1 + d_i^2 \leq \frac{n}{2}$ for all $1 \leq i \leq m$.

In addition, given this notation, any combinatorial $3$-manifold of the form $M_k$, that
is, $d_i^0 + d_i^1 + d_i^2 + k \leq \frac{n}{2}$ for all $1 \leq i \leq m$,
is the $k-th$ member of such an expansion series. 

\begin{proof}[Proof of Theorem~\ref{thm:main}]
Let $M$ be a combinatorial $3$-manifold with $n$ vertices given by
	$$M = \left \{ d_1 , \ldots , d_m, \left (1:\frac{n}{2}-1:1:\frac{n}{2}-1 \right ) \right \} ,$$
$d= (d_i^0 : d_i^1 : d_i^2 : d_i^3)$ such that $d_i^0 + d_i^1 + d_i^2 \leq \frac{n}{2}$ for all $1 \leq i \leq m$. 

Throughout the proof we use the following naming convention. 
Instead of identifying the $n$ vertices of $M$
with the elements of $\mathbb{Z}_n$ we use the integers 
$ - \frac{n}{2} + 1, - \frac{n}{2} + 2 , \ldots , \frac{n}{2} - 1$
and $ \pm \frac{n}{2}$ (note that $n$ is even) where the labels coincide with the elements
of $\mathbb{Z}_n$ when taken modulo $n$. The tetrahedra containing vertex $0$ in $M$ are then given by
	\small
	$$ \begin{array}{rl} \bigcup \limits_{i = 1}^{m} & \left \{ \langle 0, d_i^0, d_i^0 + d_i^1 , d_i^0 + d_i^1 + d_i^2 \rangle,
						 \langle - d_i^0, 0, d_i^1, d_i^1 + d_i^2 \rangle, \right .\\
						& \left . \langle - d_i^0 - d_i^1 , - d_i^1, 0, d_i^2 \rangle, 
						 \langle - d_i^0 - d_i^1 - d_i^2, - d_i^1 - d_i^2 , - d_i^2, 0 \rangle \right \} \\
		\cup & \left \{ \langle - \frac{n}{2} + 1, 0, 1, \pm \frac{n}{2} \rangle, \langle -1, 0, \frac{n}{2}-1, \pm \frac{n}{2} \rangle \right \}
	\end{array}  $$
\normalsize
In a similar fashion we name the $n+k$ vertices of $M_k$ by 
$ - \frac{n}{2} + 1, - \frac{n}{2} + 2 , \ldots , \frac{n}{2}, \frac{n}{2} +1 , 
\ldots , \frac{n}{2} + k - 1$ and we identify $-\frac{n}{2} = \frac{n}{2} +k$. 
Then we have for the tetrahedra containing $0$ in $M_k$
	\small
	$$ \begin{array}{rl} \bigcup \limits_{i = 1}^{m} & \left \{ \langle 0, d_i^0, d_i^0 + d_i^1 , d_i^0 + d_i^1 + d_i^2 \rangle,
						 \langle - d_i^0, 0, d_i^1, d_i^1 + d_i^2 \rangle, \right .\\
						& \left . \langle - d_i^0 + d_i^1 , - d_i^1, 0, d_i^2 \rangle, 
						 \langle - d_i^0 - d_i^1 - d_i^2, - d_i^1 - d_i^2 , - d_i^2, 0 \rangle \right \} \\
			\bigcup \limits_{\ell = \frac{n}{2}}^{\lfloor \frac{n+k}{2} \rfloor}
						& \left \{ \langle 0 , 1, \ell, \ell+1 \rangle, 
						\langle -1, 0, \ell -1, \ell \rangle ,
						\langle n-\ell, n-\ell +1 , 0, 1 \rangle, 
						\langle n-\ell -1 , n - \ell -1, 0 \rangle \right \} \\
 			\cup & \left \{ \langle - \frac{n}{2} + 1, 0, 1, \pm \frac{n}{2} \rangle, 
				\langle -1, 0, \frac{n}{2}-1, \pm \frac{n}{2} \rangle \right \}
	\end{array}  $$

\normalsize
In particular note that for the first $m$ difference cycles there is no difference between the tetrahedra containing
$0$ in $M$ and the ones in $M_k$ respectively.

\medskip
Since $M$ and $M_k$ all have a transitive automorphism group, all vertex links within each individual complex are isomorphic and hence it suffices to look at the link of vertex $0$ in order to verify that $M$ or $M_k$ is a combinatorial manifold. Since $(~1~:~\frac{n}{2}-1~:~1~:~\frac{n}{2}~-~1~)$ is part of $M$, we know that the link of vertex $0$ appears as indicated in Figure~\ref{fig:linkM} on the top left hand side, where the rest of the link fills the grey area, and all vertices $v$ in the interior of the grey area are labelled by $v - n$ whenever $v > \frac{n}{2}$ (note that $(~1~:~\frac{n}{2}-1~:~1~:~\frac{n}{2}~-~1~)$ is a short orbit of length $\frac{n}{2}$). Now, if we look at the vertex link of $M_k$, $k > 0$, the fact that $d_i^0 + d_i^1 + d_i^2 \leq \frac{n}{2}$ for all difference cycles $d_i$ together with the labelling convention assures that all vertex labels in the interior of the square surrounding the grey area remain unchanged. Outside the grey area the link grows by $2k$ triangles. By considering that the number of vertices of $M_k$ is $n+k$ it is easy to verify by looking at Figure~\ref{fig:linkM} on the top right (the vertex link of $M_1$) and on the bottom (the vertex link of $M_k$) that the vertex link of $M_k$ is again a sphere for all $k>0$.
	\begin{figure}[h!]
		\begin{center}
			\includegraphics[width=\textwidth]{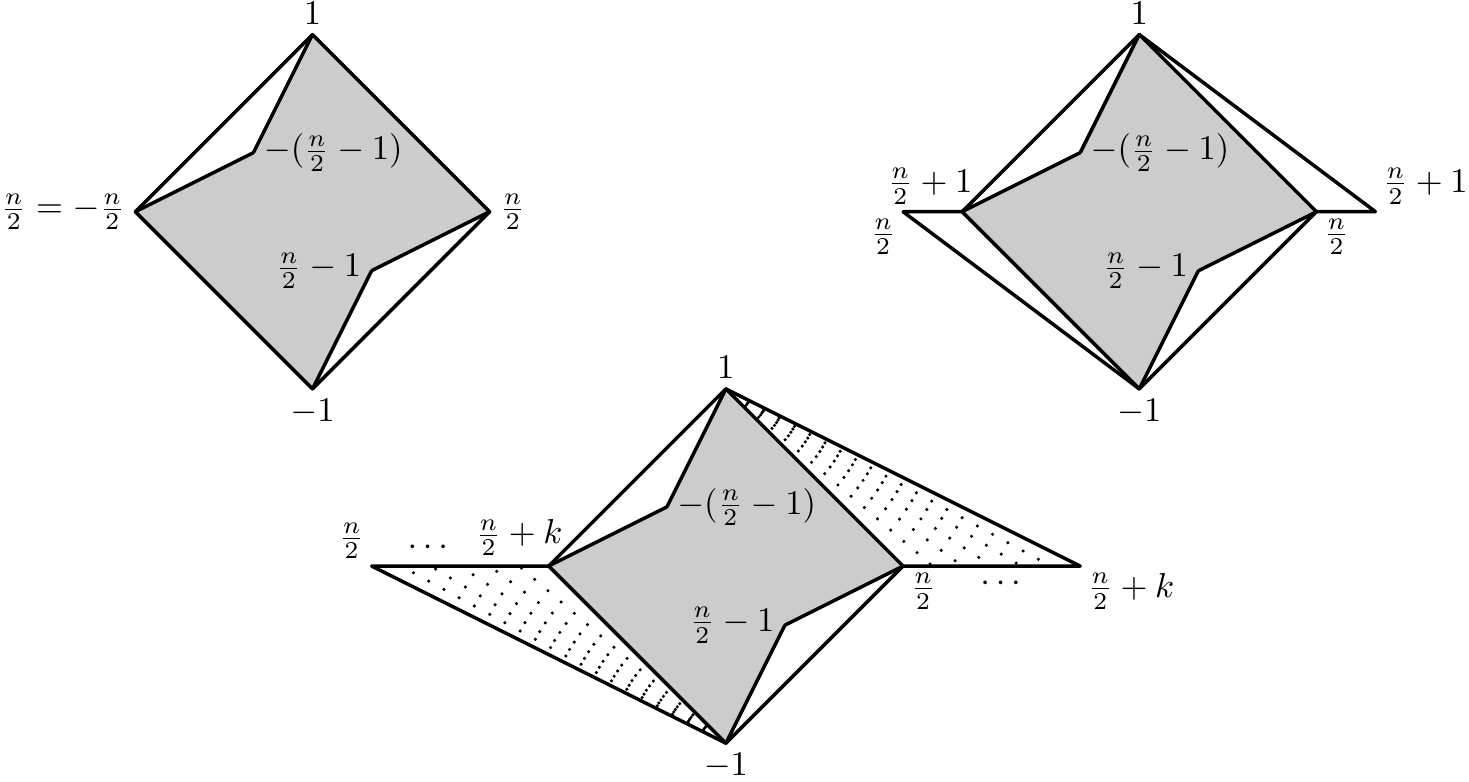} 
			\caption{Link of vertex $0$ of $M$ (top left), $M_1$ (top right) and $M_k$ (bottom). \label{fig:linkM}}
		\end{center}
	\end{figure}

\bigskip
Now assume that for at least one of the difference cycles $d_i$ of $M$ we have $d_i^0 + d_i^1 + d_i^2 > \frac{n}{2}$. If $(1:\frac{n}{2}-1:1:\frac{n}{2}-1)$ is part of $M$ we can write the link of vertex $0$ of $M$ as before (see Figure~\ref{fig:linkM} top left). Now look at the triangle $\langle d_i^0 , d_i^0 + d_i^1 , d_i^0 + d_i^1 + d_i^2 \rangle$. By construction (cf. the first part of the proof), the vertex $d_i^0 + d_i^1 + d_i^2$ is written as $- n + d_i^0 + d_i^1 + d_i^2$ and lies in the interior of the grey area. On the other hand we have $d_i^0 + d_i^1 + d_i^2 = \frac{n}{2} + k_0$ for some integer $k_0 \geq 1$ which lies on the boundary ($k_0 = 1$, see Figure~\ref{fig:linkM} top right) or on the outside ($k_0 > 1$, see Figure~\ref{fig:linkM} on the bottom) of the grey area. Hence the vertex $- n + d_i^0 + d_i^1 + d_i^2 = \frac{n}{2} + k_0$ is singular in the vertex link of $0$ in $M_{k_0}$ and $M_{k_0}$ cannot be a combinatorial manifold.

\bigskip
By the same arguments as presented above, the vertex link of a manifold of the form $M_k$ with $n+k$ vertices must look like the vertex link on the bottom of Figure~\ref{fig:linkM} which thus can be reduced to a manifold of the form $M_0$ with $n$ vertices.

Furthermore, the link of vertex $0$ of $M_k$ contains all vertices $\frac{n}{2}, \frac{n}{2} + 1 , \ldots , n + k$. On the other hand, it contains all vertices $- \frac{n}{2}, - \frac{n}{2} + 1 , \ldots , -1, 1, \ldots , \frac{n}{2}$ if and only if $M$ is neighbourly. Hence $M_k$ is neighbourly if and only if $M$ is neighbourly.
\end{proof}

\begin{remark}
	It seems that infinite series of combinatorial $3$-manifolds as 
	described in Theorem \ref{thm:main} usually contain one further 
	combinatorial $3$-manifold with $n-1$ vertices given by
	$$ M_{-1} = \{ d_{1,-1} , \ldots , d_{m,-1} \} $$
	where $d_{i,-1} := (d_i^0 : d_i^1 : d_i^2 : d_i^3 - 1)$.
	In general, these manifolds then no longer share 
	common difference cycles with the cyclic polytopes. However,
	in many cases the manifolds $M_{-1}$ fit into the
	rest of the family in terms of the topological type.

	The question of whether or not
	such a member $M_{-1}$ always occurs
	or if families can be constructed where $M_{-1}$ is 
	not a combinatorial manifold is interesting but
	has to be left open at this point. 
\end{remark}

\section{A $3$-parameter family of combinatorial $3$-manifolds}
\label{sec:ccSFS}

The aim of this section is to proof Theorems~\ref{kor:HeegaardGenus} and 
\ref{thm:actionAutGroup}, and to prepare the proof of Theorem 
\ref{thm:brieskornSpheres}.

\medskip
Theorem \ref{thm:main} allows us to find large numbers of infinite series of neighbourly combinatorial $3$-manifolds. However, a priori it is not clear which of the families obtained by Theorem \ref{thm:main} actually describe an infinite family of distinct manifolds.
Indeed, existing infinite series of combinatorial $3$-manifolds suggest that most such families consist of infinitely many triangulations of only very few distinct topological $3$-manifolds (cf. \cite[Section 4.5.1]{Spreer10Diss} or \cite{Kuehnel85NeighbComb3MfldsDihedralAutGroup}). Thus to obtain infinite families of interesting $3$-manifolds requires more work. 

The $3$-parameter family of cyclic combinatorial $3$-manifolds given in Theorem~\ref{thm:brieskornSpheres} was constructed by hand, by extending and combining various one-parameter families of interesting combinatorial $3$-manifolds found by applying Theorem \ref{thm:main} and the census of cyclic combinatorial $3$-manifolds from \cite{Spreer14CyclicCombMflds}. The subsequent analysis of the complexes was assisted by computer, using the computational topology software \textsf{simpcomp} \cite{simpcomp,simpcompISSAC,simpcompISSAC11} and the combinatorial recognition routines in \texttt{Regina} \cite{Burton09Regina,Burton12CompTopWRegina}. 

\subsection{Construction of the family}
\label{sec:3paramFam}

In what follows, we construct a $3$-parameter family 
$\operatorname{M}(p,q,r)$ of combinatorial $3$-manifolds with transitive 
cyclic automorphism group, $p$ and $q$ co-prime positive integers, and
$r$ a non-negative integer. $\operatorname{M}(p,q,r)$ 
consists of a base triangulation $\operatorname{B}(p,q,r)$ and, for $r>0$, 
three collections of solid tori $\operatorname{F}_1 (p,q,r)$, 
$\operatorname{F}_2 (p,q,r)$ and $\operatorname{F}_3 (p,q,r)$,
each of which may consist of several solid tori, and each of which has 
compatible boundary with $\operatorname{B}(p,q,r)$. These solid tori are then 
glued to $\operatorname{B}(p,q,r)$ in order to give a closed combinatorial 
$3$-manifold, hence
$$ \operatorname{M}(p,q,r) = \operatorname{B}(p,q,r) \cup 
    \operatorname{F}_1 (p,q,r) \cup 
    \operatorname{F}_2 (p,q,r) \cup 
    \operatorname{F}_3 (p,q,r) . $$
We will see that, for $r>0$, $\operatorname{B}(p,q,r)$ is homeomorphic to a 
bundle over a punctured surface such that the solid tori 
$\operatorname{F}_i (p,q,r)$, $1 \leq i \leq 3$, provide exceptional fibres.

For $r=0$, $\operatorname{F}_3 (p,q,0)$ is not a solid torus but
a collection of $pq$ tetrahedra glued together along common edges.
Nonetheless, $\operatorname{M}(p,q,0)$ is still a combinatorial manifold. 

Recall that we identify the vertices of 
$ \operatorname{M}(p,q,r) $ with the elements of $\mathbb{Z}_{2pq+r}$ and all 
calculations involving the vertex labels are
modulo $2pq+r$. In particular, a vertex denoted by 
$-v$, $pq \leq v \leq 2pq+r$, is interpreted as vertex
$2pq+r-v$ in the naming convention explained in the proof of 
Theorem~\ref{thm:main}.

\medskip
To construct $\operatorname{B}(p,q,r)$, note that $p$ and $q$ are co-prime 
and hence there exist integers $m \in \{ 1, 2, \ldots , q-1 \}$ and 
$k \in \{ 1, 2, \ldots , p-1 \}$ such that $mp - kq  = 1$. The base 
$\operatorname{B}(p,q,r)$ is then given by
%
$$ \operatorname{B}(p,q,r) = \{ (1 \,:\, kq \,:\, (q - m)p \,:\, p q + r),
  (1 \,:\, kq \,:\, p q + r \,:\, (q - m)p),(1  \,:\, p q + r \,:\, kq \,:\, 
  (q - m)p) \}. $$
To construct the first collection of solid tori $\operatorname{F}_1 (p,q,r)$ 
let us assume w.l.o.g. that $(p - k)q > kq$ (if $kq \geq (p-k)q$ the initial 
arguments of the Euclidean algorithm below are interchanged resulting in a 
similar construction).

If the Euclidean algorithm is run with input $kq$ and $(p - k)q$ this yields 
a series of equations
\begin{equation}
    \label{eq:EuclAlgo}
    \begin{array}{lp{0.5cm}lp{0.5cm}l}
	&& a_1 = (p-k)q; && b_1 = kq ;\\
	&& a_2 = a_1 - b_1; && b_2 = b_1 ;\\
	&& \ldots && \ldots \\
	&& \ldots && \ldots \\
	\textrm{if } a_{i} > b_{i}: && a_{i+1} = a_{i} - b_{i}; && b_{i+1} = b_{i}; \\
	\textrm{if } a_{i} < b_{i}: && a_{i+1} = b_{i} - a_{i}; && b_{i+1} = a_{i}; \\
	&& \ldots && \ldots \\
	&& \ldots && \ldots \\
	&& a_{N((p-k)q,kq)} = q ; && b_{N((p-k)q,kq)} = q; \\	
    \end{array}	
\end{equation}
(note that by construction, the greatest common divisor of $kq$ and 
$(p - k)q$ is $q$). Then $\operatorname{F}_1$ is given by
$$ \operatorname{F}_1 (p,q,r) = \{ (b_i \,:\, a_i \,:\, b_i \,:\, 2 p q - 
  2 b_i - a_i + r) \,\,\,|\,\,\,  1 \leq i \leq N((p - k)q,kq) \} .$$
The construction of $\operatorname{F}_2 (p,q,r)$ is analogous. Let w.l.o.g. 
$(q - m)p > mp$. The greatest common divisor of $(q - m)p$ and $mp$ is $p$ 
and if $(c_i,d_i)$, $1 \leq i \leq N((q - m)p,mp)$, is the sequence of integer 
pairs from the Euclidean algorithm as described above then $\operatorname{F}_2$ 
is given by
$$ \operatorname{F}_2 (p,q,r) = \{ (d_i \,:\, c_i \,:\, d_i \,:\, 2 p q - 
  2 d_i - c_i + r) \,\,\,|\,\,\,  1 \leq i \leq N((q - m)p,mp) \} . $$

Finally, the complex $\operatorname{F}_3 (p,q,r)$ is a subset of 
the boundary complex of the cyclic $4$-polytope, namely
$$ \operatorname{F}_3 (p,q,r) = \{ (1 \,:\, p q - 1 + i \,:\, 1 \,:\, 
  p q - 1 + r - i) \,\,\,|\,\,\, 0 \leq i \leq \lfloor r/2 \rfloor + 1 \}, $$
it is a solid torus for $r>0$ and consists of the single short
difference cycle $(1 \,:\, p q - 1 \,:\, 1 \,:\, p q - 1 )$ for $r=0$.

\bigskip
\begin{lemma}
	\label{prop:Mpqr}
	For every pair of co-prime $p$ and $q$, $2 \leq p < q$, and $r \geq 0$,
	the simplicial complex $M(p,q,r)$ is a combinatorial $3$-manifold.
\end{lemma}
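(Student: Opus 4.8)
The plan is to verify directly that $\operatorname{M}(p,q,r)$ satisfies the defining condition of a combinatorial $3$-manifold, namely that the link of every vertex is a PL $2$-sphere. Since each of $\operatorname{B}$, $\operatorname{F}_1$, $\operatorname{F}_2$, $\operatorname{F}_3$ is given as a union of difference cycles, the complex is invariant under the cyclic shift $v \mapsto v+1 \bmod (2pq+r)$, so $\mathbb{Z}_{2pq+r}$ acts transitively on the vertices and all vertex links are isomorphic. It therefore suffices to analyse the link $\operatorname{lk}(0)$ of the single vertex $0$. In dimension $2$ a PL sphere coincides with a topological $2$-sphere, so the goal reduces to showing that $\operatorname{lk}(0)$ is a connected closed surface of Euler characteristic $2$.

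First I would list the triangles of $\operatorname{lk}(0)$ contributed by each difference cycle, using the signed labelling convention $-v \equiv 2pq+r-v$ from the proof of Theorem~\ref{thm:main}: a full-length cycle $(a_0:a_1:a_2:a_3)$ produces four tetrahedra at $0$, hence four triangles in the link, exactly as in the display in that proof, while a short cycle of length $\tfrac{n}{2}$ (which occurs among the $\operatorname{F}_3$-cycles precisely when $r$ is even) contributes correspondingly fewer. The contribution of $\operatorname{F}_3$ is a sub-family of the cyclic-polytope cycles $(1:\ell-1:1:n-\ell-1)$, so its part of $\operatorname{lk}(0)$ is a controlled piece of the link inside the $2$-sphere $\partial C_4(2pq+r)$ and is already transparent via Gale's evenness condition.

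The core of the argument is to show that these triangles assemble into a closed surface, that is, that every edge of $\operatorname{lk}(0)$ lies in exactly two triangles and that the link of every vertex within $\operatorname{lk}(0)$ is a single cycle. Here the number-theoretic structure does the work. In $\operatorname{F}_1$ consecutive difference cycles $(b_i:a_i:b_i:\ldots)$ and $(b_{i+1}:a_{i+1}:b_{i+1}:\ldots)$ produced by the Euclidean algorithm~(\ref{eq:EuclAlgo}) share an entry, since $b_{i+1}=b_i$ or $b_{i+1}=a_i$ at each step; this forces their triangles to be glued edge-to-edge and to sweep out an annular region that telescopes, step by step, down to the terminal cycle with $a=b=q$. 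The same holds for $\operatorname{F}_2$ with terminal value $p$, and the identity $mp-kq=1$ together with the divisor computations $\gcd(kq,(p-k)q)=q$ and $\gcd((q-m)p,mp)=p$ guarantees that the boundaries of these annular regions match the edges supplied by $\operatorname{B}$ and by $\operatorname{F}_3$. The main obstacle is precisely this matching at the seams between the four pieces: one must check that no edge is left in a single triangle and that no vertex of the link becomes singular by having its link split into more than one cycle, and it is here that the coprimality of $p$ and $q$ and the exact index ranges for $i$ and $\ell$ are essential.

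Finally I would close the argument with two global checks. I show $\operatorname{lk}(0)$ is connected, and a direct count of its vertices, edges and triangles — read off from the lengths of the difference cycles, with the parity of $r$ accounted for in the short-cycle terms — gives $\chi(\operatorname{lk}(0))=2$. Since the only connected closed surface with Euler characteristic $2$ is $\mathbf{S}^2$, it follows that $\operatorname{lk}(0)$ is a $2$-sphere and hence that $\operatorname{M}(p,q,r)$ is a combinatorial $3$-manifold. The degenerate case $r=0$, where $\operatorname{F}_3$ collapses to the single short cycle $(1:pq-1:1:pq-1)$ and is no longer a solid torus, is handled by the same link analysis, since the computation above uses only the triangles this cycle contributes to $\operatorname{lk}(0)$ and not the intrinsic topology of $\operatorname{F}_3$.
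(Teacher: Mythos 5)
Your proposal is sound and its top-level structure coincides with the paper's: invoke the transitive $\mathbb{Z}_{2pq+r}$-action to reduce everything to the link of vertex $0$, then verify that this single link is a $2$-sphere (using the fact that in dimension $2$ every triangulated sphere is PL standard, so no separate PL check is needed). Where you diverge is in how that verification is done. The paper's proof is essentially a proof by picture: it exhibits $\operatorname{lk}(0)$ explicitly in Figures~\ref{fig:lk} and \ref{fig:lk1}, with the triangles of $\operatorname{B}$, $\operatorname{F}_1$, $\operatorname{F}_2$, $\operatorname{F}_3$ drawn as regions of a triangulated disc-decomposition of the sphere, and simply reads off that the pieces close up. You instead propose a combinatorial certificate: check that every edge of $\operatorname{lk}(0)$ lies in exactly two triangles and every vertex star is a disc, then conclude via connectivity and $\chi=2$. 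Your route is in principle more verifiable and generalises better, but note that the entire content of the lemma sits in the step you flag as ``the main obstacle'': the seam-matching between the four pieces, governed by $mp-kq=1$ and the telescoping of the Euclidean-algorithm cycles $(b_i:a_i:b_i:\,\cdot\,)$. You correctly identify the mechanism (consecutive cycles share an entry, forcing edge-to-edge gluing down to the terminal cycle, with $\gcd$ computations controlling the boundary identifications), but you describe this check rather than execute it — which is exactly the information the paper's figures encode. So the two proofs are at a comparable level of rigour; yours trades the figures for an Euler-characteristic bookkeeping that would need to be carried out explicitly, including the short-orbit correction for $r$ even that you rightly single out.
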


\begin{proof}
See Figures~\ref{fig:lk} and \ref{fig:lk1} for drawings of the vertex link 
of vertex $0$ of $\operatorname{M} (p,q,r)$ - a combinatorial $2$-sphere. 
By the transitive symmetry we know that all vertex links are combinatorially 
isomorphic to the link of vertex $0$ and hence all vertex links of 
$\operatorname{M} (p,q,r)$ are homeomorphic to the $2$-sphere.
	\begin{figure}[h!]
		\begin{center}
			\includegraphics[width=\textwidth]{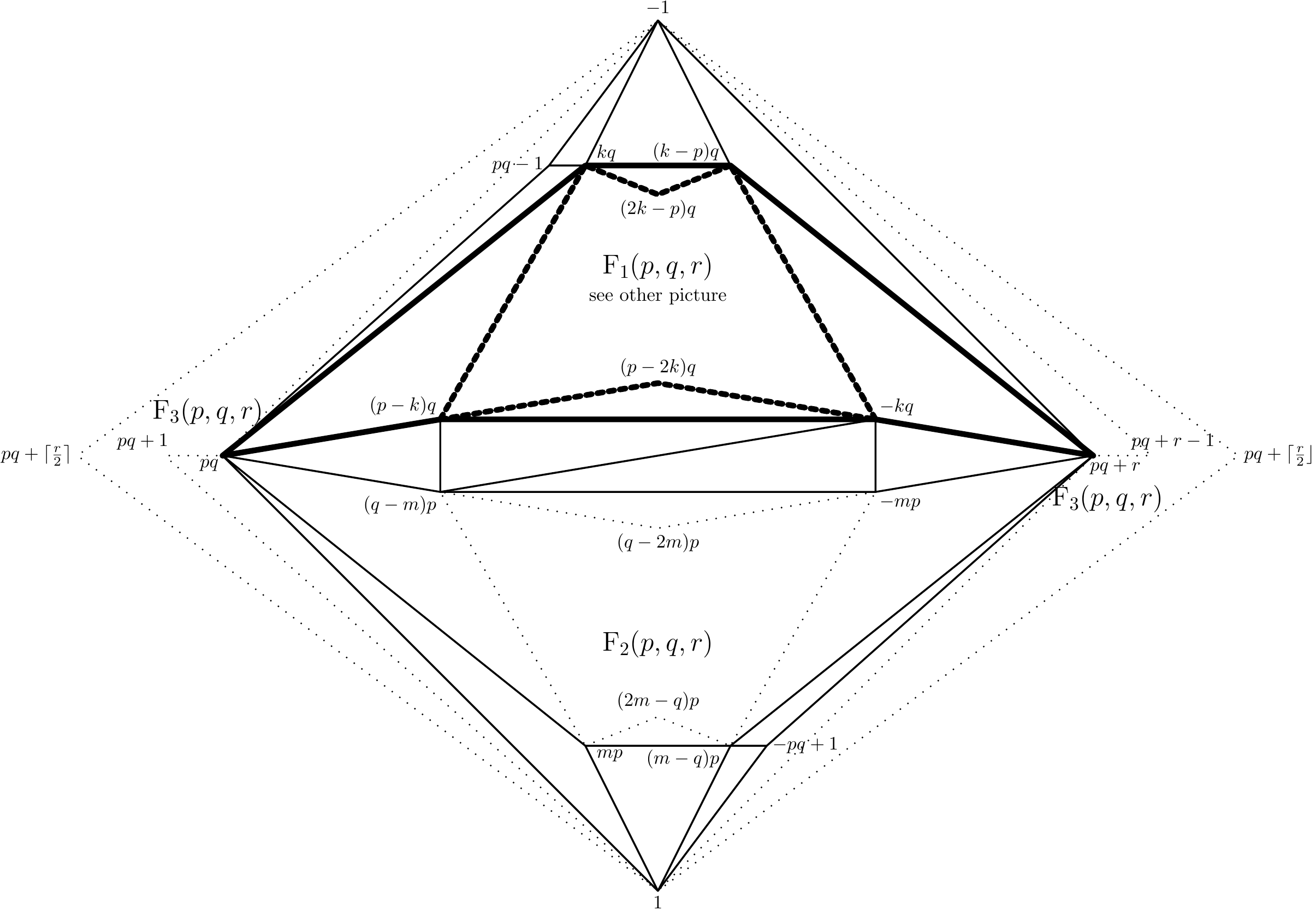} 
			\caption{Vertex link of vertex $0$ of $\operatorname{M} (p,q,r)$. 
      The solid lines represent the triangles belonging to 
      $\operatorname{B} (p,q,r)$, the dotted lines 
			represent the triangles belonging to the first difference cycle of 
			$\operatorname{F}_i (p,q,r)$, $1 \leq i \leq 3$, as indicated.
			See Figure~\ref{fig:lk1} for a more detailed drawing of the region
			$\operatorname{F}_1 (p,q,r)$. \label{fig:lk}}
		\end{center}
	\end{figure}
	\begin{figure}[h!]
		\begin{center}
			\includegraphics[width=.85\textwidth]{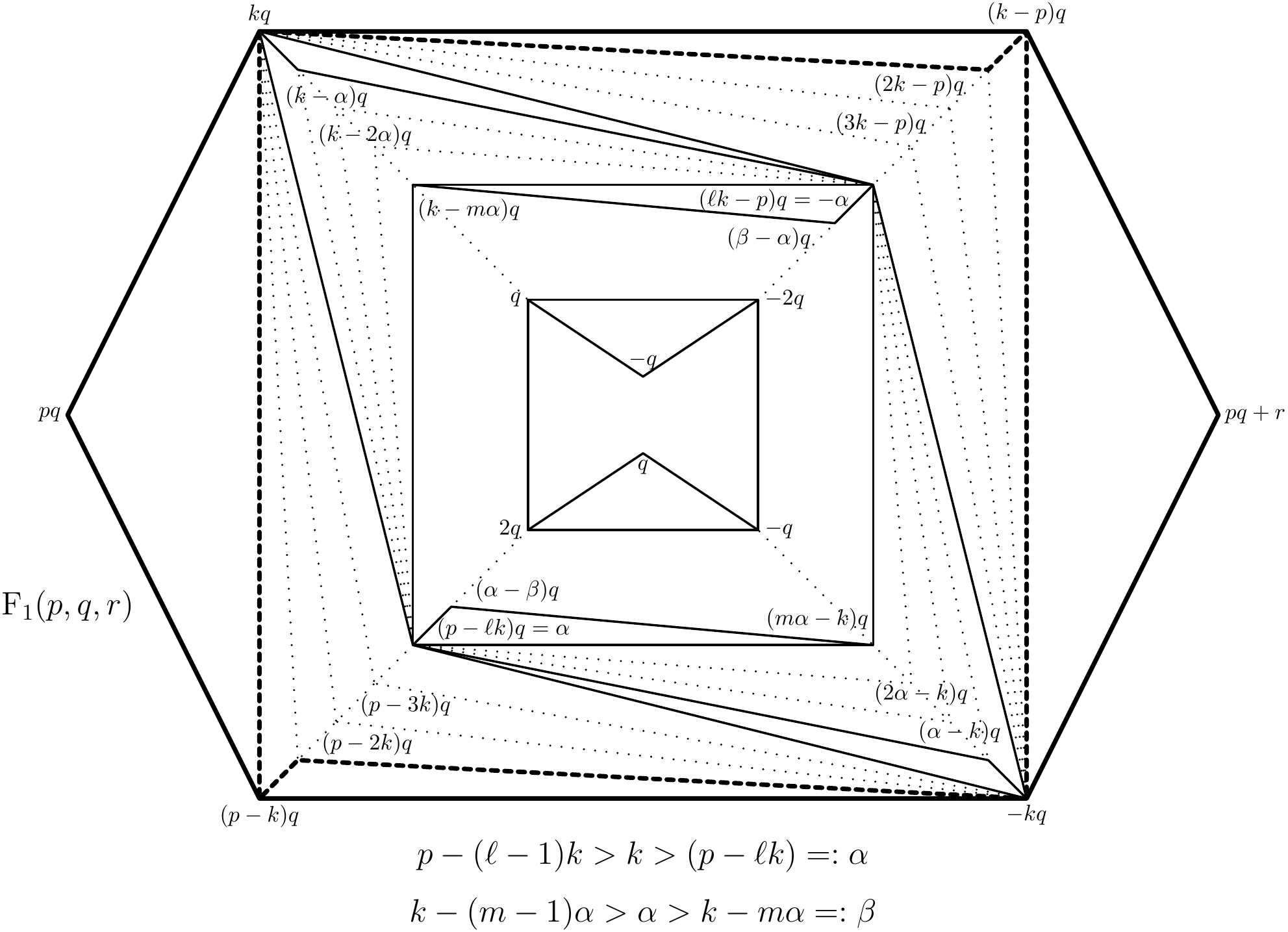} 
			\caption{The disc $\operatorname{F}_1 (p,q,r)$ from Figure~\ref{fig:lk} 
      in more detail.
			\label{fig:lk1}}
		\end{center}
	\end{figure}
\end{proof}

In Section \ref{sec:topTypes} we prove that the combinatorial 
$3$-manifolds $\operatorname{M}(p,q,r)$, $r>0$, are in fact combinatorial 
Seifert fibred spaces with changing topological types and, for $r=0$,
homeomorphic to $(\mathbf{S}^2 \times \mathbf{S}^1)^{\# (p-1)(q-1)}$. 
However, let us first determine 
some other interesting attributes of these combinatorial manifolds.

\subsection{An upper bound for the Heegaard genus of $\operatorname{M}(p,q,r)$}
\label{sec:heegaard}

In this section we determine an upper bound for the Heegaard genus of 
$\operatorname{M}(p,q,r)$ using rsl-functions (cf.\ Section~\ref{sec:rslfunc} 
and \cite{Kuehnel95TightPolySubm}).

\begin{theorem}
	\label{prop:MorseFunc}
	For all $\operatorname{M}(p,q,r)$, $p$ and $q$ co-prime, the rsl-function 
	$$ f : \operatorname{M}(p,q,r) \to [0 , 1] ; \quad v \mapsto \frac{v}{2pq+r-1}$$ 
	has exactly $2(p-1)(q-1)+2$ critical points.
\end{theorem}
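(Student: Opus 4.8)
The plan is to use the rsl-Morse theory of Section~\ref{sec:rslfunc}. The ordering of the vertices induced by $f$ is simply the natural order $0 < 1 < \cdots < 2pq+r-1$, so a vertex $v$ is critical of index $i$ with multiplicity $\tilde\beta_{i-1}(\operatorname{lk}_-(v))$, where $\operatorname{lk}_-(v)$ is the \emph{lower link} of $v$ (the subcomplex of the vertex link $\operatorname{lk}(v)$ spanned by the neighbours of $v$ carrying a smaller label) and $\tilde\beta_j$ is the $j$-th reduced Betti number over $\mathbb{F}$; this is the standard identification $H_\star(M_v,M_v\setminus\{v\})\cong\tilde H_{\star-1}(\operatorname{lk}_-(v))$. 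The total number of critical points counted with multiplicity is therefore $\sum_i\sum_v\tilde\beta_{i-1}(\operatorname{lk}_-(v))$. The decisive simplification is the transitive cyclic symmetry: since $\operatorname{lk}(v)$ is the shift $v+\operatorname{lk}(0)$ of the single $2$-sphere $\operatorname{lk}(0)$ of Lemma~\ref{prop:Mpqr}, the lower link $\operatorname{lk}_-(v)$ is, under this shift, exactly the full subcomplex of $\operatorname{lk}(0)$ on the neighbours of $0$ whose label is at least $2pq+r-v$. The whole computation thus becomes a single \emph{sweep} across the fixed $2$-sphere $\operatorname{lk}(0)$ of Figures~\ref{fig:lk} and~\ref{fig:lk1}, adding its vertices in order of decreasing difference from $0$.

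First I would dispose of the index-$0$ and index-$3$ contributions, which produce the summand $+2$. Because the difference $1$ occurs as an edge of $\operatorname{M}(p,q,r)$ (for instance in $\operatorname{B}(p,q,r)$ and $\operatorname{F}_3(p,q,r)$), every vertex $v\geq 1$ has the lower neighbour $v-1$ and every vertex $v\leq 2pq+r-2$ has the upper neighbour $v+1$. Hence $\operatorname{lk}_-(v)$ is empty only for the global minimum $v=0$, giving $c_0=1$, and it is all of $\operatorname{lk}(v)$ only for the global maximum $v=2pq+r-1$, giving $c_3=1$; for every other $v$ the lower link is a \emph{proper} subcomplex of a triangulated $2$-sphere and so has $\tilde\beta_2=0$.

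Next I would reduce to a single index. As $\operatorname{M}(p,q,r)$ is a closed $3$-manifold, $\chi(\operatorname{M}(p,q,r))=0$, so the Morse relation $c_0-c_1+c_2-c_3=0$ together with $c_0=c_3=1$ forces $c_1=c_2$. The total is therefore $2+2c_1$, and it remains to prove $c_1=(p-1)(q-1)$, where $c_1=\sum_v\tilde\beta_0(\operatorname{lk}_-(v))$ records the accumulated disconnectedness of the lower links along the sweep. Equivalently, I must count, with multiplicity and weighted by how many consecutive labels share the same lower link, the vertices $v$ at which $\operatorname{lk}_-(v)$ is disconnected.

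The core of the argument, and its main obstacle, is this connectivity count on $\operatorname{lk}(0)$. Using the explicit triangulation assembled from the difference cycles of $\operatorname{B}$, $\operatorname{F}_1$, $\operatorname{F}_2$ and $\operatorname{F}_3$, I would determine precisely where, along the decreasing-difference sweep, new components are born and where they merge, and show the total equals $(p-1)(q-1)$. Two points make this delicate. First, the lengths of the $\operatorname{F}_1$ and $\operatorname{F}_2$ families are governed by the Euclidean algorithm of \eqref{eq:EuclAlgo}, so the number of difference cycles varies with $p,q$ while the final count must collapse to the clean value $(p-1)(q-1)$; I expect the births and merges to telescope along the Euclidean runs. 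Second, I would isolate the dependence on $r$: since $r$ enters only through the maximal last entry of every difference cycle, it perturbs $\operatorname{lk}(0)$ only near the antipode of the sweep, and I would argue this region produces no disconnections, so that $c_1$ is independent of $r$. A convenient consistency check is the case $r=0$: there $\operatorname{M}(p,q,0)\cong(\mathbf{S}^2\times\mathbf{S}^1)^{\#(p-1)(q-1)}$ has total Betti number $2(p-1)(q-1)+2$, so by Kuiper's inequality $c_1\geq(p-1)(q-1)$, matching the value delivered by the sweep; establishing the reverse inequality uniformly in $r$ is exactly the content of the connectivity count above.
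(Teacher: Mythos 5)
Your framework coincides with the paper's: Lemma~\ref{lem:critIdx1} is exactly your identification of the multiplicity with $\tilde\beta_{i-1}$ of the lower link, translated via the cyclic symmetry to a sweep across the single sphere $\operatorname{lk}(0)$; the index-$0$/index-$3$ count via the edges of difference $1$ and the Euler-characteristic argument forcing $c_1=c_2$ are also identical to the paper's. However, there is a genuine gap at what you yourself flag as ``the core of the argument'': you never actually perform the connectivity count, and the two claims you defer are precisely the nontrivial content of the paper's proof. The assertion that the births and merges ``telescope along the Euclidean runs'' is the statement that $\operatorname{span}_{\{-v,\ldots,-1\}}(\operatorname{lk}_{\operatorname{F}_i(p,q,r)}(0))$ is connected for every $v\leq pq$ and $i=1,2$; the paper's Lemma~\ref{lem:connFibres} proves this by a four-case analysis of two consecutive steps of the Euclidean algorithm~(\ref{eq:EuclAlgo}), showing the identities $a_i+b_i=\max\{a_{i-1},b_{i-1}\}$ and $a_i+2b_i=\max\{a_{i-2},b_{i-2}\}$, which produce a monotone path through all vertices of the partial span. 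Without this, each of $\operatorname{F}_1$ and $\operatorname{F}_2$ could a priori contribute a number of components growing with the length of the Euclidean run, and the count would not collapse to $(p-1)(q-1)$. Likewise your claim that the region near the antipode ``produces no disconnections'' is Lemma~\ref{lem:connComplex}, which needs the explicit picture of $\operatorname{lk}(0)$ (Figure~\ref{fig:lk}) to see how the spans of $\operatorname{B}$, $\operatorname{F}_1$ and $\operatorname{F}_2$ attach to one another for $v\geq pq$.

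Even granting both connectivity lemmas, the exact value $c_1=(p-1)(q-1)$ still requires identifying the precise label ranges on which the lower link has two or three components. The paper does this in two cases (according to whether $(q-m)p>mp$ or not): multiplicity $1$ for $p\leq v\leq q-1$, multiplicity $2$ for $q\leq v\leq kq$, multiplicity $1$ for $mp\leq v\leq (q-m)p$ (and the analogous ranges in the other case), with the totals summing to $(p-1)(q-1)$ only after invoking $mp-kq=1$. Your proposed consistency check via $r=0$ and Kuiper's inequality yields only the lower bound $c_1\geq(p-1)(q-1)$, only for $r=0$, and relies on the topological identification of $\operatorname{M}(p,q,0)$ proved elsewhere; the upper bound, uniformly in $r$, is exactly the missing count. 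So the proposal is the right skeleton of the paper's proof, but the lemmas that make it work are asserted rather than established.
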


In order to prove Theorem~\ref{prop:MorseFunc} we first establish some observations
about critical points of index $1$ of $f$. In doing so we sometimes abuse notation
and refer to a non-critical point as a critical point of index $i$ and multiplicity $0$.
Moreover, the set of faces of a simplicial complex $C$ whose vertices are entirely
contained in a subset $\{ v_1 , \ldots , v_{k} \}$ of the vertices of $C$ are denoted by 
$\operatorname{span}_{\{v_1, \ldots , v_{k} \}} (C)$. Finally, in all of the following calculations
which require the choice of a field we use the field with two elements $\mathbb{F}_2$.

\begin{lemma}
	\label{lem:critIdx1}
	Vertex $v$ of $\operatorname{M}(p,q,r)$, $0 \leq v \leq 2pq + r -1$, is critical of index $1$ and multiplicity
	$$ \tilde{\beta}_0  (\operatorname{span}_{\{-v, -v+1, \ldots , -1 \}} (\operatorname{lk}_{M(p,q,r)} (0))) $$
	with respect to $f$, where $\tilde{\beta}_0 = \beta_0-1$ is the reduced Betti number of index $0$ denoting the number of 
	connected components minus $1$.
\end{lemma}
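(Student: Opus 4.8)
The plan is to combine two standard ingredients: the local characterisation of critical points of an rsl-function in terms of \emph{lower links} (in the spirit of Banchoff and K\"uhnel), and the transitive cyclic symmetry of $\operatorname{M}(p,q,r)$, which transports the lower link of an arbitrary vertex $v$ back to a prescribed induced subcomplex of the link of vertex $0$. Throughout I write $M := \operatorname{M}(p,q,r)$, $n := 2pq+r$, and work over $\mathbb{F}_2$ as prescribed.

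First I would establish the local model. Let $M_{<v}$ be the full subcomplex of $M$ on the vertices with label smaller than $v$, and let $\operatorname{lk}_-(v)$ be the \emph{lower link} of $v$, i.e.\ the subcomplex of $\operatorname{lk}_M(v)$ induced on those vertices $u$ with $f(u) < f(v)$. Since $f$ is injective on vertices, every face of $M_v$ containing $v$ consists of $v$ together with a face of $\operatorname{lk}_-(v)$, so the closed lower star $\operatorname{st}_-(v)$ is the cone $v \ast \operatorname{lk}_-(v)$ and is contractible; moreover $M_v \setminus \{v\}$ deformation retracts onto $M_{<v}$, the complement of the open star of $v$. Excision applied to the pair $(M_v, M_v \setminus \{v\})$ and the long exact sequence of the contractible pair $(\operatorname{st}_-(v), \operatorname{lk}_-(v))$ then give
$$ H_i(M_v, M_v \setminus \{v\}; \mathbb{F}_2) \;\cong\; H_i(\operatorname{st}_-(v), \operatorname{lk}_-(v); \mathbb{F}_2) \;\cong\; \tilde{H}_{i-1}(\operatorname{lk}_-(v); \mathbb{F}_2). $$
Specialising to $i=1$ shows that the index-$1$ multiplicity of $v$ equals $\tilde{\beta}_0(\operatorname{lk}_-(v))$.

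Second I would use the symmetry to rewrite this lower link. The shift $\sigma^{-v}\colon w \mapsto w - v \bmod n$ is an automorphism of $M$ carrying $v$ to $0$, hence it maps $\operatorname{lk}_M(v)$ simplicially isomorphically onto $\operatorname{lk}_M(0)$. A vertex $u \in \operatorname{lk}_M(v)$ lies in $\operatorname{lk}_-(v)$ exactly when its standard label satisfies $0 \le u \le v-1$, and such $u$ are precisely the ones sent by $\sigma^{-v}$ to $u - v \in \{-v, -v+1, \ldots, -1\}$ in the signed naming convention of the proof of Theorem~\ref{thm:main} (where a negative label $-w$ denotes the vertex $n-w$). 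Since $\sigma^{-v}$ is a simplicial isomorphism it carries the induced subcomplex $\operatorname{lk}_-(v)$ onto the induced subcomplex $\operatorname{span}_{\{-v, -v+1, \ldots, -1\}}(\operatorname{lk}_M(0))$, whence $\tilde{\beta}_0(\operatorname{lk}_-(v)) = \tilde{\beta}_0(\operatorname{span}_{\{-v, \ldots, -1\}}(\operatorname{lk}_M(0)))$. Combining this with the first step yields the stated formula.

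The main obstacle is not homological but bookkeeping: one must verify carefully that ``label strictly less than $v$'' corresponds exactly to the index set $\{-v, \ldots, -1\}$ under the cyclic identification with $\mathbb{Z}_n$, tracking the signed convention in which negative labels stand for the large vertices $n-w$. The only degenerate case is the global minimum $v=0$, whose lower link is empty (it is critical of index $0$); here $\operatorname{span}_{\emptyset}(\operatorname{lk}_M(0))$ contributes $\tilde{\beta}_0 = 0$, consistent with the convention of recording index-$1$ multiplicity $0$ for a point that is not critical of index $1$.
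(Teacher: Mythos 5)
Your proposal is correct and follows essentially the same route as the paper: both reduce the index-$1$ multiplicity to $\tilde{\beta}_0$ of the lower link via the standard excision/cone argument, and both then use the cyclic shift $w \mapsto w - v$ to transport the lower link of $v$ onto $\operatorname{span}_{\{-v,\ldots,-1\}}(\operatorname{lk}_{M(p,q,r)}(0))$. The only difference is that you spell out the excision step that the paper merely asserts.
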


\begin{proof}
	The multiplicity of a critical point $v$ of index $i$ with respect to an rsl-function $g : \operatorname{M}(p,q,r) \to [0 , 1]$
	is given by the dimension of the $i$-th relative homology $\dim_{\mathbb{F}_2} H_i (M_v,M_v \setminus \{ v \},\mathbb{F}_2)$ where 
	$M_v := \{ x \in \operatorname{M}(p,q,r) \,|\, g(x) \leq g(v) \}$.

	This is equivalent to looking at the $(i-1)$-th reduced Betti number 
	$\tilde{\beta}_{i-1}$ of $\operatorname{span}_{V_v} (\operatorname{lk}_{\operatorname{M}(p,q,r)} (v))$
	where $V_v$ is the subset of vertices $w$ such that $g(w) < g(v)$. 
	For the rsl-function $f (v) = \frac{v}{2pq+r-1}$, $v \in \{ 0 , 1, \ldots , 2pq+r-1 \}$, this means that
	vertex $v$ is critical of index $1$ with multiplicity 
	$\tilde{\beta}_{0} (\operatorname{span}_{\{ 0 , 1, \ldots , v-1 \}} (\operatorname{lk}_{\operatorname{M}(p,q,r)} (v)))$,
	and since $\operatorname{M}(p,q,r)$ has a vertex transitive cyclic automorphism group
	we have $\operatorname{span}_{\{ 0 , 1, \ldots , v-1 \}} (\operatorname{lk}_{M(p,q,r)} (v)) \cong
	\operatorname{span}_{\{ - v, \ldots , -1 \}} (\operatorname{lk}_{M(p,q,r)} (0))$ 
	which proves the result.
\end{proof}

\begin{lemma}
	\label{lem:switchLinks}
	If vertex $-v$ of $\operatorname{M}(p,q,r)$ is not contained in $\operatorname{lk}_{\operatorname{M}(p,q,r)}(0)$, then 
	vertex $v$ is critical of the same index with the same multiplicity as vertex $v-1$ with respect to $f$.
\end{lemma}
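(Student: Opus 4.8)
The plan is to reduce the statement to the characterisation of multiplicity established in the proof of Lemma~\ref{lem:critIdx1}, and then to exploit the fact that passing from vertex $v-1$ to vertex $v$ enlarges the relevant spanning set by the single vertex $-v$. Recall from that proof that, for the rsl-function $f$, a vertex $w$ is critical of index $i$ with multiplicity $\tilde{\beta}_{i-1}(\operatorname{span}_{\{-w,\ldots,-1\}}(\operatorname{lk}_{M(p,q,r)}(0)))$; the lemma itself records only the case $i=1$, but the very same computation, combined with the transitive cyclic symmetry, gives every index $i$ at once. First I would record this characterisation simultaneously for the two vertices $v$ and $v-1$.

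Next I would write out the two spanning sets explicitly. Vertex $v$ is governed by the subcomplex induced on $\{-v,-v+1,\ldots,-1\}$, while vertex $v-1$ is governed by the subcomplex induced on $\{-(v-1),\ldots,-1\}=\{-v+1,\ldots,-1\}$. These two vertex sets differ in exactly one element, namely $-v$. The central observation is that every face of $\operatorname{lk}_{M(p,q,r)}(0)$ uses only vertices of that link, so adjoining $-v$ to a vertex set can introduce new induced faces only if $-v$ is itself a vertex of the link. Under the hypothesis that $-v$ is \emph{not} contained in $\operatorname{lk}_{M(p,q,r)}(0)$, no such faces appear, and therefore
$$ \operatorname{span}_{\{-v,\ldots,-1\}}(\operatorname{lk}_{M(p,q,r)}(0)) = \operatorname{span}_{\{-v+1,\ldots,-1\}}(\operatorname{lk}_{M(p,q,r)}(0)). $$

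Since the two subcomplexes are literally identical, all of their reduced Betti numbers coincide; in particular $\tilde{\beta}_{i-1}$ agrees for both in every index $i$. By the characterisation above, this means that vertex $v$ and vertex $v-1$ are critical of the same index with the same multiplicity, which is exactly the assertion. There is no genuine obstacle here beyond book-keeping: the two points requiring care are invoking the all-indices form of Lemma~\ref{lem:critIdx1} (rather than merely the index-$1$ statement) and being explicit that ``$\operatorname{span}$'' denotes the full induced subcomplex on a vertex set, so that adjoining a vertex absent from the link genuinely leaves the subcomplex, and hence its homology, unchanged.
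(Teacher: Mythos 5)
Your argument is correct and is essentially identical to the paper's own proof: both reduce the claim to the observation that, since $-v$ is not a vertex of $\operatorname{lk}_{\operatorname{M}(p,q,r)}(0)$, the induced subcomplexes on $\{-v,\ldots,-1\}$ and $\{-v+1,\ldots,-1\}$ coincide, so all relative homology (hence index and multiplicity) agrees for $v$ and $v-1$. Your explicit remark that the all-indices version of the characterisation in Lemma~\ref{lem:critIdx1} is needed is a welcome clarification of a point the paper leaves implicit.
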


\begin{proof}
	If $-v \not \in \operatorname{lk}_{\operatorname{M}(p,q,r)}(0)$, then 
	$$\operatorname{span}_{\{ -v+1, \ldots , -1 \}} (\operatorname{lk}_{\operatorname{M}(p,q,r)} (0)) = \operatorname{span}_{\{ -v, \ldots , -1 \}} (\operatorname{lk}_{\operatorname{M}(p,q,r)} (0))$$
	and hence 
	$$\operatorname{span}_{\{ 1, \ldots , v-2 \}} (\operatorname{lk}_{\operatorname{M}(p,q,r)} (v-1)) = \operatorname{span}_{\{ 1, \ldots , v-1 \}} (\operatorname{lk}_{\operatorname{M}(p,q,r)} (v)) .$$
\end{proof}

\begin{lemma}
	\label{lem:connFibres}
	The complex $\operatorname{span}_{\{ - v, \ldots , -1 \}} 
  (\operatorname{lk}_{\operatorname{F}_i(p,q,r)} (0))$, $1 \leq i \leq 2$, 
  is connected for all integers $-pq\leq-v\leq-1$.
\end{lemma}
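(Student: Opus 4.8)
The plan is to read off the negative part of $\operatorname{lk}_{\operatorname{F}_1(p,q,r)}(0)$ directly from the difference cycles and then reduce everything to a statement about the subtractive Euclidean algorithm of Equation~(\ref{eq:EuclAlgo}). As in the proof of Theorem~\ref{thm:main}, deleting the vertex $0$ from the four tetrahedra of the difference cycle $(b_i : a_i : b_i : 2pq-2b_i-a_i+r)$ incident to it yields the four link triangles $\langle b_i, a_i+b_i, a_i+2b_i\rangle$, $\langle -b_i, a_i, a_i+b_i\rangle$, $\langle -(a_i+b_i), -a_i, b_i\rangle$ and $\langle -(a_i+2b_i), -(a_i+b_i), -b_i\rangle$. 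Since $v \leq pq$ keeps all labels in $\{-v,\dots,-1\}$ genuinely negative, the induced subcomplex $\operatorname{span}_{\{-v,\dots,-1\}}$ only retains faces with all vertices negative, so the contribution of index $i$ is exactly the triangle $T_i := \langle -(a_i+2b_i), -(a_i+b_i), -b_i\rangle$ together with the negative edge $E_i := \langle -(a_i+b_i), -a_i\rangle$ of the third triangle. All occurring absolute values $b_i, a_i, a_i+b_i, a_i+2b_i$ are positive multiples of $q$ with smallest value $q$ (attained since $a_N=b_N=q$ at the final index $N:=N((p-k)q,kq)$), so the span is empty for $v<q$, and in that degenerate range I read ``connected'' as including the empty complex.

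For $q \leq v \leq pq$ I would establish connectivity by a single monotone sweep: it suffices to show that every in-range negative vertex other than the unique minimal vertex $-q$ is joined, inside $\bigcup_i(T_i\cup E_i)$, to a vertex of strictly smaller absolute value (a \emph{downward edge}). Processing the vertices in increasing order of absolute value, each newly appearing vertex then attaches to the already-connected part whose root is $-q$; adding the remaining higher faces of the induced subcomplex cannot disconnect it. The downward edges for the \emph{composite} values are immediate from the gadget of index $i$: the vertex $-(a_i+2b_i)$ is joined to $-(a_i+b_i)$ by an edge of $T_i$, and $-(a_i+b_i)$ is joined to $-a_i$ by $E_i$, both with strictly smaller absolute value.

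The crux, and the step I expect to be the main obstacle, is the case of the \emph{pure} vertices $-a_i$ and $-b_i$ of absolute value exceeding $q$, for which the gadget of index $i$ offers only upward edges. Here I would use that a subtractive step replaces the larger coordinate by the difference and keeps the smaller one, so that $a_j+b_j=\max(a_{j-1},b_{j-1})$. The key claim is that every value $w>q$ occurring as a coordinate $a_i$ or $b_i$ also occurs as a sum $a_j+b_j$ for some $j\leq N$: letting $j-1$ be the last index whose pair contains $w$, the step $j-1\to j$ must delete $w$ because the algorithm terminates at $(q,q)$ and $w>q$, which forces $w=\max(a_{j-1},b_{j-1})=a_j+b_j$. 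Hence $w=a_j+b_j$ is the centre of the gadget of index $j$, and $E_j=\langle -(a_j+b_j), -a_j\rangle=\langle -w, -a_j\rangle$ is the required downward edge, since $a_j<a_j+b_j=w$. This gives the downward-edge property for all non-minimal vertices and therefore the connectivity of every truncation.

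Finally, $\operatorname{F}_2(p,q,r)$ is handled verbatim after exchanging the roles of $p$ and $q$: its difference cycles $(d_i : c_i : d_i : 2pq-2d_i-c_i+r)$ come from the subtractive Euclidean algorithm on $((q-m)p, mp)$, whose coordinates are positive multiples of $p$ with minimum $p$ and sums bounded by $c_1+d_1=pq$, so the same sweep yields connectivity for $p \leq v \leq pq$ (and the empty complex for $v<p$). I do not anticipate any genuinely new difficulty in the $\operatorname{F}_2$ case.
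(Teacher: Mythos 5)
Your proposal is correct and follows essentially the same route as the paper: both arguments reduce the claim to the combinatorics of the subtractive Euclidean algorithm, using the identity $a_j+b_j=\max\{a_{j-1},b_{j-1}\}$ together with the observation that every coordinate value larger than $q$ is eventually deleted and therefore reappears as such a maximum, so that every vertex of the span is joined by a link edge to one of strictly smaller absolute value and hence connects monotonically down to the unique minimal vertex $\pm q$. The only cosmetic differences are that the paper argues on the positive side of the link (invoking the symmetry of the difference cycles) and assembles an explicit decreasing path through all vertices using the further identity $a_i+2b_i=\max\{a_{i-2},b_{i-2}\}$ (proved there by a four-case analysis), whereas your downward-edge sweep dispenses with that second identity by routing the composite vertex $-(a_j+b_j)$ down through the edge $\langle -(a_j+b_j),-a_j\rangle$ instead.
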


\begin{proof}
	We prove Lemma~\ref{lem:connFibres} for $\operatorname{F}_1(p,q,r)$. 
  The proof that $\operatorname{span}_{\{ -v, \ldots , -1 \}} 
  (\operatorname{lk}_{\operatorname{F}_2(p,q,r)} (0))$ is connected for 
  $-pq\leq-v\leq-1$ is completely analogous.

	\medskip
	Recall that
	$$ \operatorname{F}_1 (p,q,r) = \{ (b_i \,:\, a_i \,:\, b_i \,:\, 2 p q - 
    2 b_i - a_i + r) \, |\,  1 \leq i \leq N((p - k)q,kq) \} ,$$
	where the $a_i$ and $b_i$ are given by the Euclidean algorithm.

	Due to the symmetry in the difference cycles of $\operatorname{F}_1$,
	$\operatorname{span}_{\{ - v, \ldots , -1 \}} 
  (\operatorname{lk}_{\operatorname{F}_1(p,q,r)} (0))$ is connected if and 
  only if	$\operatorname{span}_{\{ 1, \ldots , v \}} 
  (\operatorname{lk}_{\operatorname{F}_1(p,q,r)} (0))$ is connected. Hence 
  we focus on the latter and $1 \leq v \leq pq$.
	
	\medskip
	All vertices of 
	$\operatorname{span}_{\{ 1, \ldots , pq \}} 
  (\operatorname{lk}_{\operatorname{F}_1(p,q,r)} (0))$ 
	are of the form $b_i$, $a_i$, $a_i + b_i$ or $a_i + 2b_i$ and the edges are
	of the form $\langle b_i , a_i + b_i \rangle$, $\langle a_i , a_i + b_i \rangle$ 
	or $\langle a_i + b_i , a_i + 2b_i \rangle$ for some $i$, $1 \leq i \leq N((p - k)q,kq)$.
	
	We have $a_i+b_i = \max \{ a_{i-1} , b_{i-1} \}$ (this follows from one step of the Euclidean algorithm given by Equation~(\ref{eq:EuclAlgo})),
	and $a_i+2b_i = \max \{ a_{i-2} , b_{i-2} \}$ which can be seen by considering the following four cases.

	\begin{itemize}
		\item {\bf Case $a_{i-2} - b_{i-2} > b_{i-2}$:} We have $a_i = a_{i-2} - 2 b_{i-2}$ and $b_i = b_{i-2}$
			and the statement follows.
		\item {\bf Case $a_{i-2} > b_{i-2}$ and $a_{i-2} - b_{i-2} < b_{i-2}$:} This results in $a_{i-1} = a_{i-2} - b_{i-2}$ and
			$b_{i-1} = b_{i-2}$ followed by swapping the variables yielding $a_{i} = b_{i-1} - a_{i-1} = 2 b_{i-2} - a_{i-2}$
			and $b_i = a_{i-2} - b_{i-2}$, and hence $a_i + 2 b_i = a_{i-2}$.
		\item {\bf Case $a_{i-2} < b_{i-2}$ and $b_{i-2} - a_{i-2} > a_{i-2}$:} Here we first swap variables, thus, 
			$a_{i-1} = b_{i-2} - a_{i-2}$ and $b_{i-1} = a_{i-2}$ followed by $a_i = b_{i-2}- 2 a_{i-2}$
			and $b_i = a_{i-2}$ and all together $a_i + 2 b_i = b_{i-2}$.
		\item {\bf Case $a_{i-2} < b_{i-2}$ and $b_{i-2} - a_{i-2} < a_{i-2}$:} Now we have to swap variables twice resulting in
			$a_{i-1} = b_{i-2} - a_{i-2}$, $b_{i-1} = a_{i-2}$ and $a_i = 2 a_{i-2} - b_{i-2}$, $b_i = b_{i-2} - a_{i-2}$
			and hence $a_i + 2 b_i = b_{i-2}$.
	\end{itemize}

	Finally for $i \leq 2$ it follows that $a_1+2b_1 = pq$, and $a_2+2b_2 = a_1 + b_1 = \max \{ (p-k)q, kq \} $. As a result we have
	$$\{ a_1, b_1 , a_2, b_2, \ldots , a_{N((p - k)q,kq)}, b_{N((p - k)q,kq)} \} = \{ \max \{a_1,b_1\} , \ldots , \max \{a_{N((p - k)q,kq)}, b_{N((p - k)q,kq)}\} \},$$
	where $\max \{a_i,b_i\} > \max \{a_{i+1},b_{i+1}\} $. 

	It follows that $\operatorname{lk}_{\operatorname{F}_1(p,q,r)} (0)$ contains edges of the form 
	$\langle \max \{ a_{i-1}, b_{i-1} \} , \max \{ a_{i-2}, b_{i-2} \} \rangle$ for all 
	$1 \leq i \leq N((p - k)q,kq)$. Hence there exist a path meeting all vertices of 
	$\operatorname{span}_{\{ 1, \ldots , pq \}} (\operatorname{lk}_{\operatorname{F}_1(p,q,r)} (0))$ 
	in increasing / decreasing order. By symmetry this also holds for vertices $-pq \leq v \leq -1$, and 
	$\operatorname{span}_{\{ -v, \ldots , -1 \}} (\operatorname{lk}_{\operatorname{F}_1(p,q,r)} (0))$ 
	is connected for all $-pq \leq -v \leq -1$.
\end{proof}

\begin{lemma}
	\label{lem:connComplex}
	The complex $\operatorname{span}_{\{ - v, \ldots , -1 \}} (\operatorname{lk}_{M(p,q,r)} (0))$ is connected for all $-2pq -r + 1 \leq -v \leq -pq $.
\end{lemma}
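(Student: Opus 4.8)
The plan is to induct on $v$, building up the complex $\operatorname{span}_{\{-v,\ldots,-1\}}(\operatorname{lk}_{\operatorname{M}(p,q,r)}(0))$ one vertex at a time while maintaining connectivity. Working with residues modulo $n=2pq+r$, this complex is the subcomplex of $\operatorname{lk}_{\operatorname{M}(p,q,r)}(0)$ induced on the vertices of residue $n-v,n-v+1,\ldots,n-1$; as $v$ runs from $pq$ to $n-1$ the threshold $n-v$ drops from $pq+r$ to $1$, and each increase of $v$ by one adjoins exactly the vertex of residue $n-v$, every residue already present being strictly larger. Using the splitting $\operatorname{lk}_{\operatorname{M}(p,q,r)}(0)=\operatorname{lk}_{\operatorname{B}(p,q,r)}(0)\cup\bigcup_{i=1}^{3}\operatorname{lk}_{\operatorname{F}_i(p,q,r)}(0)$, it then suffices to prove two statements: (a) the base case, that $\operatorname{span}_{\{-pq,\ldots,-1\}}(\operatorname{lk}_{\operatorname{M}(p,q,r)}(0))$ is connected; and (b) an upward-edge property, that every vertex of residue $w$ with $1\le w\le pq+r-1$ is joined in $\operatorname{lk}_{\operatorname{M}(p,q,r)}(0)$ to a vertex of strictly larger residue. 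Granting (a) and (b), each vertex adjoined during the induction attaches to the already-connected part, and connectivity propagates from $v=pq$ up to $v=n-1$.

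For (a) I would use the vertex $-pq$, of residue $pq+r$, as a common hub. By Lemma~\ref{lem:connFibres} the complexes $\operatorname{span}_{\{-pq,\ldots,-1\}}(\operatorname{lk}_{\operatorname{F}_i(p,q,r)}(0))$ are connected for $i=1,2$, and each contains $-pq$ because the maximal fibre vertex equals $pq$ (i.e. $a_1+2b_1=pq=c_1+2d_1$, from the proof of Lemma~\ref{lem:connFibres}). The contribution of $\operatorname{F}_3(p,q,r)$ meets the hub through the edge $\langle -pq,-1\rangle$ of its $i=0$ difference cycle. The delicate part is the base $\operatorname{B}(p,q,r)$: its three difference cycles must be shown to glue into a single connected piece reaching the hub. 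Here the identity $mp-kq=1$ is decisive — for instance it forces $1+kq+(q-m)p=pq$, so that the all-negative triangle of the first cycle is $\langle -pq,-(pq-1),-(q-m)p\rangle$ and contains the hub, while the same identity identifies the high vertex $-(q-m)p$ (of residue $pq+r+kq+1$) arising from the second cycle with this neighbour of $-pq$. Tracing the remaining high-residue vertices of $\operatorname{B}(p,q,r)$ through their triangles in the same way shows that they too attach to the hub, so the base case is connected.

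For the upward-edge property (b) I would argue region by region. For $\operatorname{F}_1(p,q,r)$ and $\operatorname{F}_2(p,q,r)$ the monotone path constructed in the proof of Lemma~\ref{lem:connFibres}, which runs through the vertices $\max\{a_i,b_i\}$ in strictly decreasing order (and likewise $\max\{c_i,d_i\}$), supplies an upward edge at every fibre vertex except the maximal one of residue $pq$. That last, interface vertex is exactly where the exceptional-fibre torus intervenes: the $i=0$ difference cycle of $\operatorname{F}_3(p,q,r)$ provides the edge $\langle pq,pq+1\rangle$. More generally $\operatorname{F}_3(p,q,r)$ carries a ladder of triangles $\langle 1,pq+i,pq+i+1\rangle$ climbing the mid-range residues from $pq$ upward, together with triangles $\langle -(pq+i+1),-(pq+i),-1\rangle$ descending from residue $n-1$; since for $r>0$ these two families overlap, every vertex of $\operatorname{F}_3(p,q,r)$ of residue below $pq+r$ has an upward edge. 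The finitely many remaining vertices of $\operatorname{B}(p,q,r)$ are checked directly from its three difference cycles, again invoking $mp-kq=1$.

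The main obstacle is precisely this bookkeeping: one must verify that no high-residue vertex is left isolated in the base case and that every mid- or low-residue vertex acquires an upward edge, with the relevant edge frequently supplied by a region other than the one carrying the vertex (as in the residue-$pq$ example, rescued by $\operatorname{F}_3(p,q,r)$). The recurring device that makes all of these incidences line up is the relation $mp-kq=1$ between the two solutions of the defining congruence, which repeatedly identifies vertices across the three cycles of $\operatorname{B}(p,q,r)$ and across the fibre families; the only topological input needed is that $\operatorname{lk}_{\operatorname{M}(p,q,r)}(0)$ is a $2$-sphere (Lemma~\ref{prop:Mpqr}) and that the fibre pieces are connected (Lemma~\ref{lem:connFibres}). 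Figure~\ref{fig:lk} serves as a convenient check, since each of these connections can be read off the drawn link directly.
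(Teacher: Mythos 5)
Your proposal is correct, and it reaches the statement by a genuinely different organization than the paper's proof. The paper argues region-wise: reading Figure~\ref{fig:lk}, it asserts that the span of the full link is connected for $pq \leq v \leq pq+r$ and that the span of the $\operatorname{B}(p,q,r)$-part is connected for the whole range $pq \leq v \leq 2pq+r-1$, then invokes the proof of Lemma~\ref{lem:connFibres} to say the $\operatorname{F}_1$- and $\operatorname{F}_2$-parts are connected and attached to the $\operatorname{B}$-part, and concludes. You instead run a monotone sweep: connectivity at the single value $v=pq$, followed by a purely local ``upward-edge'' condition for each vertex adjoined afterwards. I checked your anchor computations and they hold: the all-negative triangles $\langle -pq, -(pq-1), -(q-m)p\rangle$ and $\langle -(kq+1), -kq, -pq\rangle$ of $\operatorname{B}(p,q,r)$ do contain the hub $-pq$, the edge $\langle -pq, -1\rangle$ does come from the $i=0$ cycle of $\operatorname{F}_3(p,q,r)$, the ascending and descending ladders of $\operatorname{F}_3(p,q,r)$ overlap for every $r>0$ (since $2\lfloor r/2\rfloor + 4 \geq r$) so every residue in $[pq, pq+r-1]$ gets an edge to the next residue up, and the monotone chain through the vertices $\max\{a_i,b_i\}$ from the proof of Lemma~\ref{lem:connFibres} supplies upward edges inside the fibre regions; even the stray $\operatorname{F}_1$-vertex of residue $pq+\min\{kq,(p-k)q\}$, whose only $\operatorname{F}_1$-edges go downward, is rescued by the $\operatorname{F}_3$ ladder exactly when its residue falls below $pq+r$. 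What your version buys is that it makes explicit what the paper's ``by looking at Figure~\ref{fig:lk}'' must actually verify, reduces the bookkeeping to one edge per vertex, and dovetails with how the lemma is consumed in Theorem~\ref{prop:MorseFunc}, where the absence of index-$1$ critical points at vertices $v \geq pq$ is precisely your upward-edge property. Both arguments rest on the same external inputs (Lemma~\ref{lem:connFibres} and the explicit difference cycles), and both leave a finite residual case-check on $\operatorname{B}(p,q,r)$, which in your setup is confined to the seven link vertices of the base complex.
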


\begin{proof}
	By looking at Figure~\ref{fig:lk} we can see that 
	$\operatorname{span}_{\{ - v, \ldots , -1 \}} (\operatorname{lk}_{\operatorname{M}(p,q,r)} (0))$ 
	is connected for all $-pq-r \leq -v \leq -pq$ and 
	$\operatorname{span}_{\{ - v, \ldots , -1 \}} (\operatorname{lk}_{\operatorname{B}(p,q,r)} (0))$ 
	is connected for all $-2pq-r+1 \leq -v \leq -pq$. Moreover, from the proof of Lemma~\ref{lem:connFibres}
	we can see that both $\operatorname{span}_{\{ -v, \ldots , -1 \}} (\operatorname{lk}_{\operatorname{F}_i(p,q,r)} (0))$,
	$1 \leq i \leq 2$, are connected for $-2pq -r +1 \leq -v \leq -pq$ and attached to
	$\operatorname{span}_{\{ - v, \ldots , -1 \}} (\operatorname{lk}_{\operatorname{B}(p,q,r)} (0))$.
	All together it follows that 
	$\operatorname{span}_{\{ - v, \ldots , -1 \}} (\operatorname{lk}_{\operatorname{M}(p,q,r)} (0))$
	is connected for $-2pq -r +1 \leq -v \leq -pq$.
\end{proof}

\begin{proof}[Proof of Theorem~\ref{prop:MorseFunc}]
	Since $\operatorname{M} (p,q,r)$ contains all edges $\langle v, v+1 \rangle$, $0 \leq v \leq 2pq + r -2$,
	and $\operatorname{M} (p,q,r)$ is a combinatorial manifold, $f$ has exactly one critical vertex of
	index $0$ and exactly one critical vertex of index $3$.

	Now, by Lemma~\ref{lem:critIdx1}, the critical vertices of index $1$ of $f$ 
	and their multiplicities can be determined by
	counting the number of connected components minus $1$ of 
	$$ \operatorname{span}_{\{ - v, \ldots , -1 \}} (\operatorname{lk}_{M(p,q,r)} (0)) $$
	for all $1 \leq v \leq 2pq+r-1$. By Lemma~\ref{lem:switchLinks} and Lemma~\ref{lem:connComplex}, 
	$ \operatorname{span}_{\{ - v, - v+1, \ldots , -1 \}} (\operatorname{lk}_{M(p,q,r)} (0)) $ 
	is connected for $v \geq pq$ and so no vertex $v \geq pq$ can be a critical vertex of $f$ of index $\geq 1$. 

	Furthermore, by Lemma~\ref{lem:switchLinks}
	and Lemma~\ref{lem:connFibres} for each vertex $p \leq v < pq$ the complex 
	$\operatorname{span}_{\{ -v, \ldots , -1 \}} (\operatorname{lk}_{\operatorname{M}(p,q,r)} (0))$ has at most
	$3$ connected components, and hence $v$ is critical of index $1$ with multiplicity at most $2$.

	\medskip
	{\bf Case 1}: Let $(q-m)p > mp$. Recall that $mp = kq +1$ and thus $(p-k)q > kq$. It follows that
	for all vertices $p \leq v \leq q-1$ the complex 
	$ \operatorname{span}_{\{ - v, - v+1, \ldots , -1 \}} (\operatorname{lk}_{M(p,q,r)} (0)) $ 
	has two connected components (cf. Figure~\ref{fig:lk}) and hence these vertices are critical of index $1$ 
	and multiplicity $1$ (cf. Lemma~\ref{lem:switchLinks}), for vertices $q \leq v \leq kq$ we can see that
	$ \operatorname{span}_{\{ - v, - v+1, \ldots , -1 \}} (\operatorname{lk}_{M(p,q,r)} (0)) $
	has three connected components and thus we have critical vertices of index $1$ and multiplicity 
	$2$. For $mp \leq v \leq (q-m)p$ we again have two connected components and thus
	critical vertices of index $1$ and multiplicity $1$ and for all other vertices the complex is connected
	(cf. Lemma~\ref{lem:connComplex}). All together there are $(p-1)(q-1)$ critical points of index $1$.

	\medskip
	{\bf Case 2}: Let $(q-m)p \leq mp$. The same argument as before shows that 
	vertices $p \leq v \leq q-1$ are critical of index $1$
	and multiplicity $1$, vertices $q \leq v \leq (q-m)p$ are critical of index $1$ and multiplicity 
	$2$ and vertices $(p-k)q \leq v \leq kq$ are critical of index $1$ and multiplicity $1$,
	which also results in $(p-1)(q-1)$ critical points of index $1$.

	\medskip
	Now, since the alternating sum over all critical points counted by multiplicity equals the Euler 
	characteristic (which has to be $0$), and $f$ has only one critical point of index $0$ and $3$ each, 
	the number of critical points of index $1$ must equal the number of critical points of index $2$. 
	All together $f$ has $(p-1)(q-1)$ critical points of index
	$1$ and $2$ each and thus $2 (p-1)(q-1) + 2$ critical points in total which proves the result.
\end{proof}

Theorem~\ref{kor:HeegaardGenus} now follows as a simple corollary of
Theorem~\ref{prop:MorseFunc}.

\subsection{The homology groups of $\operatorname{M}(p,q,r)$}
\label{sec:homology}

By the proof of Theorem~\ref{prop:MorseFunc} the rsl-function
$$ f : \operatorname{M}(p,q,r) \to [0 , 1] ; \quad v \mapsto \frac{v}{2pq+r-1}$$
has $(p-1)(q-1)$ critical points of index $1$. Furthermore, Lemma~\ref{lem:connComplex} together
with the transitive cyclic symmetry of $\operatorname{M}(p,q,r)$ tells us that only vertices
$1 < v < pq$ can be critical of index $1$ and again by the transitive cyclic symmetry it follows
that all these critical points of index $1$ have to pair with critical vertices $v \geq pq$.
All together it follows that $B_{-} = \operatorname{span}_{0 ,1 , \ldots , pq-1} (\operatorname{M}(p,q,r))$
and $B_{+} = \operatorname{span}_{pq,pq+1 , \ldots ,2pq+r-1} (\operatorname{M}(p,q,r))$ must be
``handlebodies''\footnote{$B_{-}$ and $B_{+}$ might contain
isolated edges and triangles and are thus only homotopic to a handlebody. However,
there is always a small neighbourhood of $B_{-}$ and $B_{+}$ which is a proper handlebody.}
of genus $(p-1)(q-1)$.

Thus the topological type of $\operatorname{M}(p,q,r)$ is determined by how a set of
$(p-1)(q-1)$ simple closed curves in $B_{-}$ forming a basis of the first homology group
is glued to $B_{+}$.

\medskip
A basis of the first homology group of $B_{-}$ can be found by the observations
made in the previous section (in particular, cf. Lemma~\ref{lem:critIdx1} and the proof of
Theorem~\ref{prop:MorseFunc}): for all $v$, $p \leq v < pq$, we
connect distinct connected components of
$\operatorname{span}_{\{ -v, \ldots , -1 \}} (\operatorname{lk}_{\operatorname{M}(p,q,r)} (0))$
(whenever they exist) by a path in 
$\operatorname{span}_{\{ -v, \ldots , -1 \}} (\operatorname{M}(p,q,r))$.

One possible choice for such a basis of $H_1 (B_{-})$ is
$$ \langle v , v-1, v-2, \ldots , v-p, v \rangle $$
for $p \leq v \leq kq$ and 
$$ \langle v , v-1, v-2, \ldots , v-q, v \rangle $$
for $q \leq v \leq (q-m)p$.

\begin{lemma}
	\label{lem:homology}
	Let $ [c] $ be an element of $H_1 (\operatorname{M}(p,q,r))$. Then
	$$ c \simeq c + pq ,$$
	where for any path $c = \langle v_1, v_2, \ldots , v_r \rangle$ and $x \in \mathbb{Z}_n$, the sum $c + x$ denotes the 
	path obtained from $c$ by adding $x$ mod $n$ component-wise, that is,
	$c + x = \langle (v_1 + x) \mod n, (v_2 + x) \mod n, \ldots , (v_r + x) \mod n \rangle$.
\end{lemma}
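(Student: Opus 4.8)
The plan is to read the statement as the assertion that the shift automorphism $\phi\colon v \mapsto v+pq$ (reduction modulo $n := 2pq+r$), which lies in the transitive cyclic group $\mathbb{Z}_n \subseteq \operatorname{Aut}(\operatorname{M}(p,q,r))$, induces the identity on $H_1(\operatorname{M}(p,q,r))$. Since $\phi_\ast$ is a homomorphism, it suffices to verify $[c] = [c+pq]$ on a generating set, and for this I would use the explicit cycles $\langle v, v-1, \ldots, v-p, v \rangle$ (for $p \le v \le kq$) and $\langle v, v-1, \ldots, v-q, v \rangle$ (for $q \le v \le (q-m)p$) exhibited just above, which span $H_1(B_{-}) \cong H_1(\operatorname{M}(p,q,r))$ by the handlebody discussion.

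First I would produce, for a closed $1$-cycle $c = \langle v_1, v_2, \ldots, v_s = v_1 \rangle$, an explicit \emph{combinatorial prism} $P(c)$, a $2$-chain with $\partial P(c) = (c+pq) - c$. Over each edge $\langle v_j, v_{j+1} \rangle$ I place the quadrilateral with corners $v_j, v_{j+1}, v_{j+1}+pq, v_j+pq$, split into the two triangles $\langle v_j, v_{j+1}, v_{j+1}+pq \rangle$ and $\langle v_j, v_{j+1}+pq, v_j+pq \rangle$. Because $c$ is closed, the ``vertical'' edges $\langle v_j, v_j+pq \rangle$ telescope and cancel, leaving $\partial P(c) = (c+pq)-c$ \emph{provided each constituent triangle is a face of $\operatorname{M}(p,q,r)$}. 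The enabling observation is that $\operatorname{F}_3(p,q,r)$ supplies precisely the diagonal edges $\langle v, v+pq \rangle$: its $i=0$ difference cycle $(1 : pq-1 : 1 : pq-1)$ contains the tetrahedron $\langle 0, 1, pq, pq+1 \rangle$, whose $\mathbb{Z}_n$-orbit yields $\langle v, v+pq \rangle$ for every $v$. Moreover, for a consecutive edge $\langle a, a+1 \rangle$ the two prism triangles $\langle a, a+1, a+pq+1 \rangle$ and $\langle a, a+pq+1, a+pq \rangle$ are both $2$-faces of that same tetrahedron $\langle a, a+1, a+pq, a+pq+1 \rangle$. Hence the prism over the descending consecutive portion $v, v-1, \ldots, v-p$ (respectively $\ldots, v-q$) of each generator lies entirely inside $\operatorname{F}_3(p,q,r)$.

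The hard part will be the single ``long'' closing edge $\langle v-p, v \rangle$ of difference $p$ (respectively $\langle v-q, v \rangle$ of difference $q$) of each generator: here the quadrilateral with corners $v-p, v, v+pq, v-p+pq$ is not tiled by obvious facets, since the triangle $\langle v-p, v, v+pq \rangle$, with difference cycle $(p : pq : pq+r-p)$, is not among the listed facets. I expect this to be the only genuine obstacle, and I would resolve it by exhibiting an explicit filling $2$-chain for this one square inside the annular region between the edge and its shift. Since $\operatorname{F}_1(p,q,r)$ has difference-cycle entries that are multiples of $q$ and $\operatorname{F}_2(p,q,r)$ has entries that are multiples of $p$, the natural expectation is that the difference-$q$ closing edge can be filled using facets of $\operatorname{F}_1(p,q,r)$ and the difference-$p$ one using facets of $\operatorname{F}_2(p,q,r)$, with the congruence $mp-kq=1$ and the Euclidean-algorithm structure from the construction furnishing the intermediate triangles that connect $\langle v-p, v \rangle$ to $\langle v-p+pq, v+pq \rangle$ through the complex.

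Once every square has an explicit filling, the total prism $P(c)$ is a genuine $2$-chain in $\operatorname{M}(p,q,r)$ with $\partial P(c) = (c+pq) - c$, so $c \simeq c+pq$; by additivity of $c \mapsto P(c)$ over $1$-chains this extends from the basis to all of $H_1$, proving the lemma. As a sanity check, in the case $(p,q,r) = (2,q,2kq)$ one has $\phi_\ast = \rho(g)^{pq} = \rho(g)^{2q} = \operatorname{id}$, consistent with $|\rho(G)| = 2q$ in Theorem~\ref{thm:actionAutGroup}.
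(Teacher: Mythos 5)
Your overall strategy coincides with the paper's: reduce to the explicit generators of $H_1(B_{-})$ and exhibit, edge by edge, a $2$-chain in $\operatorname{M}(p,q,r)$ cobounding $c$ and $c+pq$. Your handling of the consecutive edges via the tetrahedra $\langle a, a+1, a+pq, a+pq+1 \rangle$ of the difference cycle $(1:pq-1:1:pq-1)$ in $\operatorname{F}_3(p,q,r)$ is correct and matches the first few lines of the paper's chain of elementary homotopies, and you have correctly isolated the one nontrivial square, the one sitting over the long closing edge $\langle v-p, v\rangle$ (respectively $\langle v-q, v\rangle$).

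That square, however, is exactly where the content of the lemma lives, and your proposal stops at the assertion that you \emph{expect} it can be filled using facets of $\operatorname{F}_1(p,q,r)$ and $\operatorname{F}_2(p,q,r)$; no filling is actually produced, so as written the argument has a genuine gap. The paper does carry this step out: setting $c_i$ to be the decreasing sequence of maxima $\max\{a_j,b_j\}$ produced by the Euclidean algorithm that defines the relevant collection of exceptional-fibre tori, it slides the path through the intermediate vertices $v+c_1, v+c_2, \ldots$ using the edges $\langle \max\{a_{i-1},b_{i-1}\}, \max\{a_{i-2},b_{i-2}\}\rangle$ whose existence was established in the proof of Lemma~\ref{lem:connFibres}, until the detour closes up at $pq+v-p$ and $pq+v$; the relation $mp-kq=1$ is what makes the endpoints match. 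To complete your proof you would need to write down this fan of triangles (or an equivalent filling) explicitly and verify that each triangle is a face of a listed difference cycle. Your heuristic that the multiples-of-$p$ entries handle the difference-$p$ closing edge and the multiples-of-$q$ entries handle the difference-$q$ one points in the right direction, but it is a prediction rather than an argument, and it is the step that cannot be omitted.
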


\begin{proof}
	We show that $c \simeq c+pq$ for all basis elements in $H_1 (B_{-})$ and hence
	for a generating system of $H_1 (\operatorname{M}(p,q,r))$.

	\medskip
	This is done by using triangles contained in the difference cycles of $\operatorname{F}_1(p,q,r)$ and
	$\operatorname{B}(p,q,r)$ to gradually transform $\langle v, v-1, v-2, \ldots , v-p, v \rangle$ into 
	$\langle pq+v, pq+v-1, pq+v-2, \ldots , pq+v-p, pq+v  \rangle$. 
	The proof for generating elements
	of the form $\langle v, v-1, v-2, \ldots , v-q, v \rangle$ is analogous.

	Let $m = N((q - m)p,mp)$ and $c_i := \max \{ a_{m-i} , b_{m-i} \}$. Then
	\small
	$$ \begin{array}{rcl}
		\langle v, v-1, \ldots , v-p, v \rangle &\simeq& \langle v, pq+v, v-1, pq+v-1, v-2, pq+v-2, \ldots , pq +v-p+1, v-p, v \rangle \\
		&\simeq& \langle v, pq+v, v-1, pq+v-1, v-2, pq+v-2, \ldots , pq+v-p+1, pq+v-p, v-p, v \rangle \\
		&\simeq& \langle v, pq+v, pq+v-1, pq+v-2, \ldots , pq+v-p+1, pq+v-p, v-p, v \rangle \\
		&\simeq& \langle v, pq+v, pq+v-1, pq+v-2, \ldots , pq+v-p, v-c_1, v+c_1, v \rangle \\
		&\simeq& \langle v, pq+v, \ldots , pq+v-p, v-c_1, v+c_1, v+c_2, \ldots , v+c_m, v \rangle \\
		&\simeq& \langle v, pq+v, \ldots , pq+v-p, v-c_1, c_1+v, \ldots , c_m+v, \max \{ (q - m)p,mp \}+v , pq+v, v \rangle \\
		&\simeq& \langle v, pq+v, \ldots , pq+v-p, v-c_1, pq+v-p, pq+v, v \rangle \\
		&\simeq& \langle pq+v, \ldots , pq+v-p, v-c_1, pq+v-p, pq+v \rangle \\
		&\simeq& \langle pq+v, \ldots , pq+v-p, pq+v \rangle . \hfill \qedhere
	\end{array} $$
	\normalsize
\end{proof}

Together with the cyclic symmetry, the above observation allows us to analyse $\operatorname{M}(p,q,r)$
in further detail. In particular, it follows from Lemma~\ref{lem:homology} that
given $p$ and $q$, the homology of $\operatorname{M}(p,q,r)$ only depends on $r\mod pq$.

As a special case, if $r \equiv 0 \mod pq$ we can deduce that $H_1(\operatorname{M}(p,q,r)) = \mathbb{Z}^{(p-1)(q-1)}$,
and if $\operatorname{gcd} (p,r) = \operatorname{gcd} (q,r) = 1$ then all generators of $H_1(B_{-})$ are identified in
$H_1(\operatorname{M}(p,q,r))$ eventually resulting in trivial homology. More generally, if $a = \operatorname{gcd} (p,r)$
and $b = \operatorname{gcd} (q,r)$ then
\begin{equation}
	H_1 (\operatorname{M}(p,q,r)) = \mathbb{Z}^{(a-1)(b-1)} \oplus \mathbb{Z}_{p/a}^{b-1} \oplus \mathbb{Z}_{q/b}^{a-1}
\end{equation}
and since all $\operatorname{M}(p,q,r)$ are orientable we have
$$ H_{\star} (\operatorname{M}(p,q,r)) = (\mathbb{Z},\mathbb{Z}^{(a-1)(b-1)} \oplus \mathbb{Z}_{p/a}^{b-1} \oplus \mathbb{Z}_{q/b}^{a-1},\mathbb{Z}^{(a-1)(b-1)},\mathbb{Z}).$$
We do not prove the claims made above
since they independently follow from the topological types of 
$\operatorname{M}(p,q,r)$
shown in Section~\ref{sec:topTypes}. However, the specific structure of 
$\operatorname{M}(p,q,r)$ given by Theorem~\ref{prop:MorseFunc} and
Lemma~\ref{lem:homology} gives rise to an interesting and rarely observed
connection between the automorphism group of $\operatorname{M}(p,q,r)$ in the
case $p=2$, $q$ prime, $r \equiv 0 \mod pq$, and its first homology group
which is discussed in the following section.

\subsection{Action of the automorphism group on the homology of $\operatorname{M}(2,q,2kq)$}

In this section we present a number of non-trivial group representations of the
cyclic group
$$\operatorname{Aut} (\operatorname{M}(2,q,2kq)) = \langle g \rangle $$
with $g = (0,1, \ldots , 2q(k+2))$, $q$ prime, into the free $\mathbb{Z}$-module
$$ H_1 (\operatorname{M}(2,q,2kq)) = \mathbb{Z}^{q-1} ,$$
$k \geq 0$. In particular, we give a proof of Theorem~\ref{thm:actionAutGroup}.

This is done by applying Lemma~\ref{lem:homology} to a suitable choice 
of a basis of $H_1 (\operatorname{M}(2,q,2kq))$ and following the construction of
finite order integer matrices as described in \cite{Koo03FiniteOrderIntegerMatrices}.

\begin{proof}[Proof of Theorem~\ref{thm:actionAutGroup}]
	Note that by Lemma~\ref{lem:homology} for every cycle $c$ in $\operatorname{M}(2,q,2kq)$
	we have $c \simeq c + 2q$. Hence the size of the image of every action 
	$$ \rho : \operatorname{Aut} (\operatorname{M}(2,q,2kq)) \to \operatorname{SL}(q-1,\mathbb{Z})$$
	divides $2q$ and in particular 
	$$| \rho ( \operatorname{Aut} (\operatorname{M}(2,q,2kq)))| \leq 2q .$$
	In particular, $\rho(g)$ is an integer matrix of order $\leq 2q$.

	Following the observations made in the last section, a basis of 
	$H_1 (\operatorname{M}(2,q,2kq))$ is given by the cycles
	$$ a_{v-1} = \langle v , v-1, v-2, v \rangle $$
	for $2 \leq v \leq q$. Thus by construction we have
	$$ g \cdot a_{i} = a_{i + 1}, $$
	$2 \leq i < q$, where $g$ acts on the cycles of $\operatorname{M}(2,q,2kq)$ by adding $1$ modulo $2q(k+2)$
	to each entry of the cycle.

	Moreover, up to similarity transformations, the only matrix $M \in \operatorname{SL}(q-1,\mathbb{Z})$ of finite 
	order $2q$ is of the form
	$$ M = 
		\begin{pmatrix} 
			0 & \cdots  & \cdots & 0 & -1 \\
			1 & 0 & \cdots & 0 & 1 \\
			0 & \ddots & \ddots & \vdots & \vdots \\
			\vdots & \ddots & \ddots & 0 & -1 \\
			0 & \cdots & 0 & 1 & 1 \\
		\end{pmatrix}$$
	See \cite{Koo03FiniteOrderIntegerMatrices} where $M$ is described in more detail.
	As a side note, in \cite{Brehm09LatticeTrigE33Torus} a similar finite-order integer matrix (of order $q$) occurs in 
	a construction of $d$-dimensional combinatorial tori. 

	Note that the first $q-2$ columns of $M$
	are compatible with the above choice for a basis of $H_1 (\operatorname{M}(2,q,2kq))$
	and, in order to prove Theorem~\ref{thm:actionAutGroup}, it remains to show that
	$$ g \cdot a_{q-1} = a_2^{-1} a_3 a_4^{-1} a_5 \ldots a_{q-2}^{-1} a_{q-1} .$$

	\bigskip
	We have
	$$ \begin{array}{lcl}
		\operatorname{M}(2,q,2kq) &= \,\, \{ & (1 \,:\, q \,:\, q-1 \,:\, 2q(k+1)), \\
		&& (1 \,:\, q \,:\, 2q(k+1) \,:\, q-1), \\
		&& (1 \,:\, 2q(k+1) \,:\, q \,:\, q-1), \\
		&& (1 \,:\, 2q-1 \,:\, q-1 \,:\, 2q(k+1)-1)), \\
		&& (q-1 \,:\, 2 \,:\, q-1 \,:\, 2q(k+1) )), \\
		&& (2 \,:\, q-3 \,:\, 2 \,:\, 2q(k+2) -q-1 )), \\
		&& (2 \,:\, q-5 \,:\, 2 \,:\, 2q(k+2) -q+1 )), \\
		&& (2 \,:\, q-7 \,:\, 2 \,:\, 2q(k+2) -q+3 )), \\
		&& \ldots , \\
		&& (2 \,:\, 2 \,:\, 2 \,:\, 2q(k+2) -6 )) \,\, \} .
	\end{array} $$ 
	In particular, we have the following triangle relations:
	$$ \begin{array}{lcl}
		(1 \,: \, q) & \leftrightarrow & (q+1), \\
		(q-1 \,: \, 1) & \leftrightarrow & (q), \\
		(q \,: \, q-1) & \leftrightarrow & (2q-1), \\
		(1 \,: \, 2q-1) & \leftrightarrow & (2q), \\
		(q-1 \,: \, 2) & \leftrightarrow & (q+1), \\
		(2 \,: \, q-1) & \leftrightarrow & (q+1), \\
		(q-3 \,: \, 2) & \leftrightarrow & (q-1), \\
		(2 \,: \, q-3) & \leftrightarrow & (q-1), \\
		\ldots && \ldots ,\\
		(2 \,: \, 2) & \leftrightarrow & (4) .
	\end{array} $$ 
	For the basis elements of $H_1 (\operatorname{M}(2,q,2kq))$ this translates to
	$$ \begin{array}{lcl}
		a_{v-1}^{-1} & \simeq & \langle v, v-2, v-1, v \rangle \\
		& \simeq & \langle v, 0, v-2, v-1, v \rangle \\
		& \simeq & \langle 0, v-2, v-1, v, 0 \rangle
	\end{array} $$ 
	and
	$$ \begin{array}{lcl}
		a_{v} & \simeq & \langle v+1, v, v-1, v+1 \rangle \\
		& \simeq & \langle v+1, v+q+1, v, v-1, v+1 \rangle \\
		& \simeq & \langle v+1, v+q+1, v+2, v, v-1, v+1 \rangle \\
		& \simeq & \langle v+1, v+q+1, v+2, 0, v, v-1, v+1 \rangle \\
		& \simeq & \langle 0, v, v-1, v+1, v+q+1, v+2, 0 \rangle
	\end{array} $$ 
	for $v \in \{ 2,4, \ldots , q-1 \}$, and thus
	$$ \begin{array}{lcl}
		a_{v-1}^{-1} a_{v} & \simeq & \langle 0, v-2, v-1, v, 0 \rangle \langle 0, v, v-1, v+1, v+q+1, v+2, 0 \rangle \\
		& \simeq & \langle 0, v-2, v-1, v+1, v+q+1, v+2, 0 \rangle \\
		& \simeq & \langle 0, v-2, v-1, v+1, v, v+q+1, v+2, 0 \rangle \\
		& \simeq & \langle 0, v-2, v-1, v+1, v, v+2, 0 \rangle \\
		& \simeq & \langle 0, v-2, v-1, v+1, v, 0 \rangle .
	\end{array} $$ 

	\medskip
	Putting these pieces together this results in
	$$ \begin{array}{lcl}
		(a_{1}^{-1} a_{2}) \ldots (a_{q-2}^{-1} a_{q-1}) & \simeq & \langle 0, 0, 1,3,2 ,0 \rangle \langle 0, 2, 3,5,4 ,0 \rangle \ldots \langle 0, v-2, v-1, v+1, v, 0 \rangle \\
		& \simeq & \langle 0, 1,3,5, \ldots , q-2,q,q-1,0 \rangle \\
		& \simeq & \langle 0, 1,q,q-1,0 \rangle \\
		& \simeq & \langle 0, 1,q+1,q,q-1,0 \rangle \\
		& \simeq & \langle 0, q+1,q,q-1,q+1,0 \rangle \\
		& \simeq & \langle q+1,q,q-1,q+1 \rangle \\
		& \simeq & g \cdot a_{q-1} . \hfill \qedhere
	\end{array} $$ 
\end{proof}

\section{The topological types of $\operatorname{M}(p,q,r)$}
\label{sec:topTypes}

In this section we prove Theorem \ref{thm:brieskornSpheres}. 
That is, we show that $\operatorname{M}(p,q,r)$ is homeomorphic
to the Seifert fibred spaces of type 
	$$ \operatorname{SFS} [ (\mathbb{T}^2 )^{\# (a-1)(b-1)/2} : (-p/a,b_1)^b 
  (q/b,b_2)^a (-r/(ab) ,b_3) ] $$
with $a = \gcd(p,r)$ and $b = \gcd(q,r)$, $r > 0$.

\medskip
In particular, we show that $\operatorname{M}(p,q,r)$ is homeomorphic to 
the Brieskorn homology sphere $\Sigma (p,q,r)$ whenever $p$, $q$ and $r$ are 
co-prime, $\operatorname{M}(2,q,2)$ is homeomorphic to the lens space 
$\operatorname{L}(q,1)$, and that, in the limit case $r=0$, we have
$\operatorname{M}(p,q,0) \cong (\mathbf{S}^2 \times \mathbf{S}^1)^{\# (p-1)(q-1)}$.

The proof is given as a corollary of the following five observations.

\begin{enumerate}
	\item The Seifert fibrations given in Theorem \ref{thm:brieskornSpheres} are 
    well-defined, i.e., all invariants of Seifert fibred spaces satisfying 
    the conditions of the theorem for the same triple $(p,q,r)$, $r>0$,
		are isomorphic (cf. Lemma \ref{lem:isomorphic}).
	\item $\operatorname{M}(p,q,r)$ is a combinatorial manifold for all 
     $p,q \in \mathbb{N}$, $p$ and $q$ co-prime, and for all non-negative
    integers $r\geq 0$ (cf. Lemma \ref{prop:Mpqr}).
	\item For $a = \operatorname{gcd}(p,r)$ and $b = \operatorname{gcd}(q,r)$,
		\begin{itemize}
			\item $\operatorname{F}_1(p,q,r)$ is a triangulation of $b$ disjoint 
        copies of a solid torus, 
			\item $\operatorname{F}_2(p,q,r)$ is a triangulation of $a$ disjoint 
        copies of a solid torus,
			\item for $r>0$, $\operatorname{F}_3(p,q,r)$ is a triangulation of a 
        single solid torus, and,
      \item for $r = 0$, a collection of $pq$ 
        tetrahedra glued together along edges forming a solid torus pinched
        along edges.
		\end{itemize}
		Furthermore, the boundary of the meridian disc of each torus can be 
    explicitly described (cf. Lemma \ref{lem:fibresMerDiscs}).
	\item For $r>0$, $\operatorname{B}(p,q,r)$ united with a small neighbourhood 
      of the 
      boundaries of $\operatorname{F}_i(p,q,r)$, $1 \leq i \leq 3$, is 
      homeomorphic to the Cartesian product of a circle with the orientable 
      surface of genus $\frac12 (a-1)(b-1)$ with $b+a+1$ discs removed, where 
      each of the $b+a+1$ boundary components corresponds to one
  		boundary torus of $\operatorname{F}_i(p,q,r)$, $1 \leq i \leq 3$ 
      (cf. Lemma \ref{lem:Bpqr}). In addition, the boundary curves of the 
      $b+a+1$ meridian discs of $\operatorname{F}_i(p,q,r)$, $1 \leq i \leq 3$,
  		in $\operatorname{B}(p,q,r)$ can be determined to be of the desired type 
      (cf. Lemma \ref{lem:bdryCurves}).
  \item For $r=0$, $\operatorname{B}(p,q,0) \cup \operatorname{F}_3(p,q,0)$
      united with a small neighbourhood of the 
      boundaries of $\operatorname{F}_i(p,q,0)$, $1 \leq i \leq 2$, minus a 
      small neighbourhood of $\operatorname{F}_3(p,q,0)$, is 
      homeomorphic to the Cartesian product of a circle with the orientable 
      surface of genus $\frac12 (p-1)(q-1)$ with $p+q+1$ discs removed.
      The boundary curves of the 
      $p+q$ meridian discs of $\operatorname{F}_i(p,q,0)$, $1 \leq i \leq 2$,
  		and the meridian disc of a thickened version of 
      $\operatorname{F}_3(p,q,0)$ in $\operatorname{B}(p,q,0)$ can be 
      determined to be of the desired type.
\end{enumerate}

We first give detailed proofs of these five observations before we summarise 
them in order to prove Theorem~\ref{thm:brieskornSpheres}.

\begin{lemma}
	\label{lem:isomorphic}
	Given positive integers $p,q,r \in \mathbb{N}$, $2 \leq p < q$ co-prime, 
  $r > 0$, $a = \operatorname{gcd}(p,r)$ and $b = \operatorname{gcd}(q,r)$, 
  then all Seifert fibrations
	$$ \operatorname{SFS} [ (\mathbb{T}^2 )^{\# (a-1)(b-1)/2} : (-p/a,b_1)^b 
    (q/b,b_2)^a (-r/ab ,b_3) ] $$
	satisfying
	$$ (\frac{b_1}{p} - \frac{b_2}{q} + \frac{b_3}{r} ) \frac{pqr}{ab} = 1$$
	are isomorphic. In particular, their underlying manifolds are homeomorphic.
\end{lemma}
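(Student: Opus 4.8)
The plan is to reduce the statement to the classical classification of Seifert fibrations (see \cite{Orlik72SeifertMflds}): over a fixed closed oriented base surface, an oriented Seifert fibration with orientable total space is determined up to fibre-preserving isomorphism by the unordered collection of Seifert pairs $(\alpha_i,\beta_i)$ modulo the moves $\beta_i\mapsto\beta_i+\alpha_i$ (compensated in the Euler obstruction); equivalently, by the multiset of orders $\alpha_i$, the residues $\beta_i \bmod \alpha_i$, and the rational Euler number $e=-\sum_i \beta_i/\alpha_i$. Thus it suffices to check that, as $(b_1,b_2,b_3)$ ranges over all solutions of the normalisation equation, the base surface, the multiset of orders, the residues, and the Euler number are all independent of the chosen solution.

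Two of these are immediate. The base surface $(\mathbb{T}^2)^{\#(a-1)(b-1)/2}$ and the orders $\alpha_i$ --- namely $p/a$ with multiplicity $b$, $q/b$ with multiplicity $a$, and $r/(ab)$ once, all positive integers since $a\mid p$, $b\mid q$ and $ab\mid r$ --- depend only on $(p,q,r)$. For the Euler number I would compute directly $e = -\big(b\cdot\frac{b_1}{-p/a}+a\cdot\frac{b_2}{q/b}+\frac{b_3}{-r/(ab)}\big)=ab\big(\frac{b_1}{p}-\frac{b_2}{q}+\frac{b_3}{r}\big)$, so that the normalisation $\big(\frac{b_1}{p}-\frac{b_2}{q}+\frac{b_3}{r}\big)\frac{pqr}{ab}=1$ pins $e$ to the single value $(ab)^2/(pqr)$, independent of the chosen solution.

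The crux is to show that the three residues are forced by the normalisation. Clearing denominators, the normalisation is equivalent to the integer equation $\frac{qr}{ab}b_1-\frac{pr}{ab}b_2+\frac{pq}{ab}b_3=1$, whose coefficients are integers because $ab$ divides each of $qr,pr,pq$ (using $a\mid p,r$, $b\mid q,r$ and $\gcd(a,b)=1$). Reducing this equation modulo $\alpha_1=p/a$, the $b_2$- and $b_3$-coefficients vanish since each is a multiple of $p/a$, while the $b_1$-coefficient $(q/b)(r/a)$ is a unit modulo $p/a$; here I would use $\gcd(q/b,p/a)\mid\gcd(q,p)=1$ and $\gcd(r/a,p/a)=1$ (the latter being exactly the definition $a=\gcd(p,r)$). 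Hence $b_1\bmod(p/a)$ is uniquely determined. The same manipulation modulo $q/b$ isolates $b_2$, and modulo $r/(ab)$ isolates $b_3$, in each case using the analogous coprimalities (e.g.\ $r/(ab)$ divides both $r/a$ and $r/b$, so it inherits coprimality to $p/a$ and to $q/b$).

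The main obstacle is precisely the number-theoretic bookkeeping of the previous paragraph: verifying that in each of the three reductions exactly one coefficient survives as a unit while the other two vanish. Once this is in place, every choice of $(b_1,b_2,b_3)$ yields the same base surface, the same orders, the same residues and the same Euler number, so by the classification all such fibrations are isomorphic and, a fortiori, their underlying manifolds are homeomorphic, which completes the proof.
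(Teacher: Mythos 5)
Your proposal is correct and follows essentially the same route as the paper: both reduce the claim to Orlik's classification of Seifert fibrations and then verify, via the same integrality and coprimality facts (that $qr/(ab)$, $pr/(ab)$, $pq/(ab)$ are integers and that each is a unit modulo exactly one of $p/a$, $q/b$, $r/(ab)$), that the normalisation equation determines each $b_i$ modulo the corresponding order. The only difference is presentational --- you phrase the classification in terms of residues plus the Euler number, while the paper phrases it in terms of the allowed moves $b_j \mapsto b_j + a_j$, $b_k \mapsto b_k - a_k$ applied to the difference of two solutions.
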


\begin{proof}
	The isomorphism type of a Seifert fibred space with exceptional fibres
	$(a_i,b_i)$, $1\leq i \leq r$, does not change by simultaneously adding 
	$a_j$ to $b_j$ and subtracting $a_k$ from $b_k$ for any pair of indices 
	$1 \leq j,k \leq r$, or by changing the sign of all the $a_i$, $q \leq i \leq r$ 
	(cf. \cite{Orlik72SeifertMflds}).

	\medskip
	Now let $p$, $q$, $r$ be fixed and $b_i$, $b'_i$, $1\leq i \leq 3$, such that
	$$(\frac{b_1}{p} - \frac{b_2}{q} + \frac{b_3}{r} ) \frac{pqr}{ab} = 1 = 
    \frac{pqr}{ab} (\frac{b'_1}{p} - \frac{b'_2}{q} + \frac{b'_3}{r} ).$$
	In particular, this means that 
	\begin{equation}
		\label{eq:sfs}
		\frac{qr}{ab} (b_1-b'_1) - \frac{pr}{ab} (b_2-b'_2) + \frac{pq}{ab} 
    (b_3-b'_3)  = 0
	\end{equation}
	and thus
	$$\frac{r}{ab} (q (b_1-b'_1) - p (b_2-b'_2)) = \frac{-pq}{ab} (b_3-b'_3).$$
	Now note that $\operatorname{gcd} (\frac{r}{ab},\frac{-pq}{ab}) = 1$ by 
  construction and hence there exist an $\alpha \in \mathbb{Z}$ such that
	$$  (q (b_1-b'_1) - p (b_2-b'_2)) = \alpha \frac{pq}{ab}  \qquad 
    \textrm{ and } \qquad (b_3-b'_3) = \alpha \frac{r}{ab}.$$
	In particular $  q (b_1-b'_1) - p (b_2-b'_2) \equiv 0 \mod \frac{pq}{ab}$ 
  holds, and since furthermore 
  $\operatorname{gcd} (\frac{q}{b},\frac{p}{a}) = 1$, we have both 
  $q (b_1-b'_1) - p (b_2-b'_2) \equiv 0 \mod \frac{p}{a}$ and 
	$q (b_1-b'_1) - p (b_2-b'_2) \equiv 0 \mod \frac{q}{b}$ by the Chinese 
  remainder theorem.

	It follows that $(b_1-b'_1)$ is a multiple of $\frac{p}{a}$, $(b_2-b'_2)$ 
  is a multiple of $\frac{q}{b}$, $(b_3-b'_3)$ 
	is a multiple of $\frac{r}{ab}$, by Equation~(\ref{eq:sfs}) additions and 
  subtractions sum up to zero, and thus
	the Seifert fibred spaces corresponding to $(b_1,b_2,b_3)$ and 
  $(b'_1,b'_2,b'_3)$ are isomorphic.
\end{proof}

\begin{lemma}
	\label{lem:fibresMerDiscs}
	Given positive integers $p,q,r \in \mathbb{N}$, $2 \leq p < q$ co-prime, 
  $r \geq 0 $, 
	$a = \operatorname{gcd}(p,r)$ and $b = \operatorname{gcd}(q,r)$, we have:

  \begin{itemize}
    \item $ \operatorname{F}_1(p,q,r) \cong \{ 1, 2, \ldots , b \} \times 
      (\mathbf{B}^2 \times \mathbf{S}^1)$
      where the boundaries of the meridian discs $m_1^{(i)}$, $0 \leq i \leq b-1$, 
      are given by the paths
	$$ \partial m_1^{(i)} = \langle i, kq+i, 2kq+i, \ldots , (p-1)kq+i, p kq + i ,
  (k-1)pq + i, (k-2)pq +i , \ldots , i \rangle; $$
    \item $ \operatorname{F}_2(p,q,r) \cong \{ 1, 2, \ldots , a \} 
    \times (\mathbf{B}^2 \times \mathbf{S}^1),$
		where the boundaries of the meridian discs $m_2^{(j)}$, $0 \leq j \leq a-1$, 
    are given by the paths
	$$ \partial m_2^{(j)} = \langle j, mp+j, 2mp+j, \ldots , (q-1)mp+j, q mp + j , 
    (m-1)pq + j, (m-2)pq +j , \ldots , j \rangle; $$
    \item for $r > 0$, 
      $ \operatorname{F}_3(p,q,r) \cong \mathbf{B}^2 \times \mathbf{S}^1$
	  	where the boundary of the meridian disc $m_3$ is given by the path
	$$ \partial m_3 = \langle 0, pq, pq+1, pq+2, \ldots , -pq-1, -pq, 0 \rangle ;$$
    \item for the limit case $r=0$, $\operatorname{F}_3(p,q,0)$
      is a collection of $pq$ tetrahedra glued together along common edges,
      forming a solid torus pinched along $pq$ edges.
  \end{itemize}
\end{lemma}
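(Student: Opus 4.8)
The four assertions split into two genuinely different problems: the tori $\operatorname{F}_1$ and $\operatorname{F}_2$, governed by the Euclidean algorithm, and $\operatorname{F}_3$, which is a block of the cyclic polytope $\partial C_4(2pq+r)$. The plan is to treat $\operatorname{F}_1$ in full, observe that $\operatorname{F}_2$ is the same argument after interchanging the roles of $(p,k)$ and $(q,m)$ (using $\gcd(p,k)=\gcd(q,m)=1$, both immediate from $mp-kq=1$), and dispatch $\operatorname{F}_3$ separately.

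First I would count the components of $\operatorname{F}_1$. Since the subtractive algorithm of Equation~(\ref{eq:EuclAlgo}) starts from the multiples of $q$, namely $(p-k)q$ and $kq$, every $a_i$ and $b_i$ — hence every vertex difference $b_i,a_i,a_i+b_i,a_i+2b_i$ occurring in a tetrahedron of $\operatorname{F}_1$ — is a multiple of $q$. Thus every edge of $\operatorname{F}_1$ joins two vertices of $\mathbb{Z}_n$, $n=2pq+r$, differing by a multiple of $q$, and the components of $\operatorname{F}_1$ are the cosets of the subgroup of $\mathbb{Z}_n$ generated by all these differences together with $n$. The overall gcd of the differences is $\gcd((p-k)q,kq)=q$, so this subgroup is $\langle\gcd(q,n)\rangle=\langle\gcd(q,r)\rangle=\langle b\rangle$, of index $b$. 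Hence $\operatorname{F}_1$ has exactly $b$ components, indexed by $i\in\{0,\dots,b-1\}$, and the path $\partial m_1^{(i)}$, whose steps are all multiples of $q$, lies in the component of vertex $i$.

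Next I would show each component is a combinatorial solid torus and pin down its meridian. The idea is to read the Euclidean algorithm as a layered-solid-torus construction: the terminal pair $a_N=b_N=q$ contributes a core annulus of tetrahedra, and each earlier step of Equation~(\ref{eq:EuclAlgo}) layers one further annular block onto the boundary. Concretely I would order the difference cycles by the strictly decreasing quantity $\max\{a_i,b_i\}$ established in the proof of Lemma~\ref{lem:connFibres} and collapse each component from the outside in, starting from the largest block (with $a_1+2b_1=pq$), every elementary collapse being licensed by the connecting edges $\langle\max\{a_{i-1},b_{i-1}\},\max\{a_{i-2},b_{i-2}\}\rangle$ found there, until only a core cycle remains. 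A compact $3$-submanifold of the closed combinatorial $3$-manifold $\operatorname{M}(p,q,r)$ with torus boundary (read off from the links in Figures~\ref{fig:lk}--\ref{fig:lk1}) that collapses to a circle is a solid torus, giving $\operatorname{F}_1\cong\{1,\dots,b\}\times(\mathbf{B}^2\times\mathbf{S}^1)$. For the meridian I would first check that $\partial m_1^{(i)}$ closes up — the $p$ up-steps of length $kq$ and the $k$ down-steps of length $pq$ give net displacement $pkq-kpq=0$ — and then exhibit a disc tiled by triangles of $\operatorname{F}_1$ (the outermost difference cycle $(kq:(p-k)q:kq:pq+r)$ already supplies the edges of lengths $kq$ and $pq$ used by the path) whose boundary is this curve; bounding a disc, it is the meridian and not a longitude.

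Finally, for $\operatorname{F}_3(p,q,r)$ with $r>0$ I would note that $(1:pq-1+i:1:pq-1+r-i)=(1:j:1:n-2-j)$ with $j=pq-1+i$, so $\operatorname{F}_3$ is a consecutive block of the difference cycles of $\partial C_4(n)$ from Equation~(\ref{eq:partialc4n}); the collapsing argument that makes $A(l,n)$ and $B(l,n)$ solid tori in Equations~(\ref{eq:decCycPolI}) and (\ref{eq:decCycPolII}) applies verbatim to any such middle block, so $\operatorname{F}_3$ is a single solid torus and its meridian $\partial m_3$ is read off from the two extreme difference cycles as for $B(l,n)$. For $r=0$ only the short difference cycle $(1:pq-1:1:pq-1)$ survives; its period-$2$ cyclic word gives a short orbit of length $n/2=pq$, i.e.\ $pq$ tetrahedra, and inspecting the vertex link shows certain edges lie in too few tetrahedra, yielding the announced solid torus pinched along $pq$ edges (the $r\to0$ degeneration in which the meridian disc collapses). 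I expect the real difficulty to lie in the middle step: establishing rigorously that each component is a solid torus and, above all, that the explicit path $\partial m_1^{(i)}$ is its meridian (the correct slope) rather than some other boundary curve — this is where the continued-fraction content of $mp-kq=1$ must be faithfully converted into the winding of the bounding disc.
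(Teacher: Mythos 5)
Your overall strategy coincides with the paper's: collapse $\operatorname{F}_1$ through the difference cycles in the order dictated by the Euclidean algorithm, identify the number of components via a gcd, and then certify the explicit curve as a meridian; your coset argument for the component count (all vertex differences are multiples of $q$, so components live in cosets of $\langle\gcd(q,n)\rangle=\langle b\rangle$) is a slightly more direct route than the paper's, which collapses all the way down to the single difference cycle $(q:q:q:2pq+r-3q)$ and reads off $b=\gcd(q,2pq+r-3q)$ there; note that your version still needs connectivity within each coset, which the collapse supplies. The treatment of $\operatorname{F}_3$ as a terminal block of $\partial C_4(2pq+r)$ is exactly what the paper does.

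The genuine gap is the one you yourself flag and then do not close: the verification that $\partial m_1^{(i)}$ is actually \emph{the} meridian. A closed curve on the boundary of a solid torus that bounds a disc inside is either the meridian or inessential on the boundary torus, so ``bounding a disc, it is the meridian and not a longitude'' is not by itself a proof; you must additionally show the curve is embedded and essential in $\partial\operatorname{F}_1(p,q,r)$. The paper does precisely this with two short arguments that your proposal omits: simplicity follows from the divisibility computation $x\cdot(kq)=y\cdot(pq)\Rightarrow x=p\cdot(xm-yq)\Rightarrow p\mid x$, so with $0<x\leq p$ the only coincidence is the closing point; and essentialness follows by observing that the curve wraps $q/b$ times around the fundamental domain of its boundary torus in the horizontal direction. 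The null-homology inside $\operatorname{F}_1$ is handled in the paper by homotoping the path across triangles of $\operatorname{F}_1$ down to a point, which is the disc you promise to ``exhibit,'' so that part of your outline is sound; but without the embeddedness and essentialness checks the identification of the fibre slopes — on which the whole computation of the Seifert invariants in Lemma~\ref{lem:bdryCurves} rests — is not established.
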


\begin{proof}
	First let us assume that $(p-k)q \geq kq$. By definition we have
	$$ \begin{array}{rcl}
		\operatorname{F}_1 (p,q,r) &=& \{ (b_i \,:\, a_i \,:\, b_i \,:\, 
    2 p q - 2 b_i - a_i + r) \, |\,  1 \leq i \leq N((p - k)q,kq) \} \\
		&=& \{ d_i \,|\, 1 \leq i \leq N((p - k)q,kq) \}
	\end{array}$$
	where $N((p - k)q,kq)$ denotes the number of steps to compute 
  $\operatorname{gcd} ((p - k)q,kq) = q$ using the Euclidean algorithm 
	given by Equation~(\ref{eq:EuclAlgo}), and $(a_i,b_i)$ denotes the arguments 
  of the Euclidean algorithm {\em after} the $i$-th step
	(see Section \ref{sec:3paramFam} for details).

	$\partial \operatorname{F}_1 (p,q,r)$ is contained in $d_1$, and by construction $d_i$ can be 
	collapsed onto $d_{i+1}$ whenever each tetrahedron of $d_i$ 
	contains a boundary face of the complex. Hence $\operatorname{F}_1 (p,q,r)$ can be collapsed 
	onto $d_{N((p - k)q,kq)} = \{ (q : q : q : 2pq + r -3q) \}$.
	By definition, we have $\operatorname{gcd} (q,r) = b$ and hence 
	$$\begin{array}{rcl}
		\operatorname{gcd} (q,2pq + r - 3q) &=& \operatorname{gcd} (q,2pq + r - 3q) \\
		&=& \operatorname{gcd} (q,(2p-3)q + r) \\
		&=& \operatorname{gcd} (q,r) \\
		&=& b.
	\end{array} $$
	It follows that $\operatorname{F}_1 (p,q,r)$ collapses to $b$ connected components each with 
	$(2pq+r)/b$ vertices and all isomorphic to 
	$$\{ (q/b:q/b:q/b:(2pq + r -3q)/b) \} \cong \{ (1:1:1:(2pq + r)/b - 3) \}$$
	and thus 
	$$ \operatorname{F}_1 (p,q,r) \cong \{ 1, 2, \ldots , b \} \times (\mathbf{B}^2 \times \mathbf{S}^1)$$
	(see Figure \ref{fig:collScheme}).

  \medskip
  The proof for the case $(p-k)q < kq$ is completely analogous
	as is the proof that
	$$ \operatorname{F}_2 (p,q,r) \cong \{ 1, 2, \ldots , a \} \times (\mathbf{B}^2 \times \mathbf{S}^1)$$
	(see Figure \ref{fig:collScheme} again). To see that 
  $\operatorname{F}_3 (p,q,r)$ is a solid torus for $r>0$ and a collection
  of tetrahedra glued together along common edges for $r=0$, just note that it 
  coincides with the last $\lfloor \frac{r}{2} \rfloor + 1$ difference cycles 
  of the boundary complex of the cyclic polytope $\partial C_4 (2pq+r)$. 
  For more about how the boundary complex of the cyclic $4$-polytope
	can be decomposed into difference cycles, see \cite{Spreer14CyclicCombMflds}.
	\begin{figure}[h!]
		\begin{center}
			\includegraphics[width=0.9\textwidth]{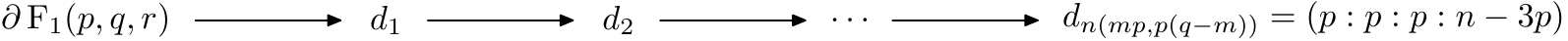}

			\vspace{.5cm}
			\includegraphics[width=0.9\textwidth]{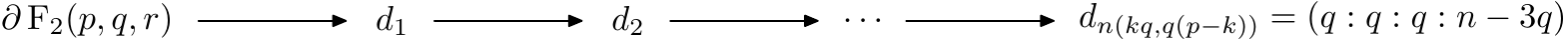} 
			\caption{$\operatorname{F}_1 (p,q,r)$ and $\operatorname{F}_2 (p,q,r)$ 
      collapsing onto multiple solid tori. \label{fig:collScheme}}
		\end{center}
	\end{figure}

	In order to prove that $\partial m_1^{(i)}$, $0 \leq i \leq b-1$, is the boundary of a meridian disc 
	of $\operatorname{F}_1 (p,q,r)$ we have to show that
	$\partial m_1^{(i)} \subset \partial \operatorname{F}_1 (p,q,r)$ is {\it i)} closed, {\it ii)} simple, 
	{\it iii)} homologous to zero inside $\operatorname{F}_1 (p,q,r)$,
	and {\it iv)} homologically non-trivial in $\partial \operatorname{F}_1 (p,q,r)$.

  \medskip
	Again, let $(p-k)q \geq kq$. The fact that {\it i)} holds follows immediately 
	from the definition. To see that {\it ii)} is true assume there is a point
  of self-intersection, that is, $x \cdot (kq) = y \cdot (pq)$ for integers 
  $0 < x \leq p$ and $0 < y \leq k$. Then
	$$ \begin{array}{rcl}
		x \cdot (kq) = y \cdot (pq) &\Leftrightarrow& x \cdot (mp-1) = y \cdot (pq)\\
		&\Leftrightarrow& x \cdot (mp) - x = y \cdot (pq)\\
		&\Leftrightarrow& x = p \cdot (x m - y q)\\
		&\Leftrightarrow& p \,\mid\, x\\
	\end{array} $$
	and since $0 < x \leq p$, the only solution is $x=p$, $y=k$, and therefore
  $\partial m_1^{(i)}$ is simple. To prove {\it iii)} 
	note that by construction we can homotopically deform $\partial m_1^{(i)}$ over triangles (that is, replace
	$\langle \ldots , v, w, \ldots \rangle$ by $\langle \ldots , v, u, w, \ldots \rangle$ if $\langle u,v,w \rangle$
	is a triangle) such that
	\small
	$$  \begin{array}{rcl}
		\langle i, kq+i, \ldots , (p-1)kq+i, k (pq) + i , (k-1)pq + i, \ldots , i \rangle &\cong& 
		\langle i, p+i, \ldots , (kq-1) \cdot p, kq \cdot p , (kq-1) \cdot p , \ldots , i \rangle \\ 
		&\cong& 0. \\
	\end{array} $$
	\normalsize
	Finally, to prove {\it iv)} we observe that $\partial m_1^{(i)}$ wraps $q/b$ times around the 
	fundamental domain of the $i$-th boundary 
	component of $\operatorname{F}_1 (p,q,r)$ 
	(cf. Figure \ref{fig:Bpqr_main}) in the horizontal direction 
	and hence cannot be homologous to zero in $\partial \operatorname{F}_1 (p,q,r)$.
	All together it follows that $m_1^{(i)}$ is a meridian disc of the $i$-th connected component of $\operatorname{F}_1 (p,q,r)$.
	Again, the proof in the case $(p-k)q < kq$ and the proof for $m_2^{(j)}$, $1 \leq j \leq a-1$, are completely analogous. 

	\medskip
	To see that $m_3$ is a meridian disc of $\operatorname{F}_3 (p,q,r)$, $r>0$,
  see Figure \ref{fig:m3} where $m_3 \subset \operatorname{F}_3 (p,q,r)$
	is given explicitly.

	\begin{figure}[h!]
		\begin{center}
			\includegraphics[width=0.9\textwidth]{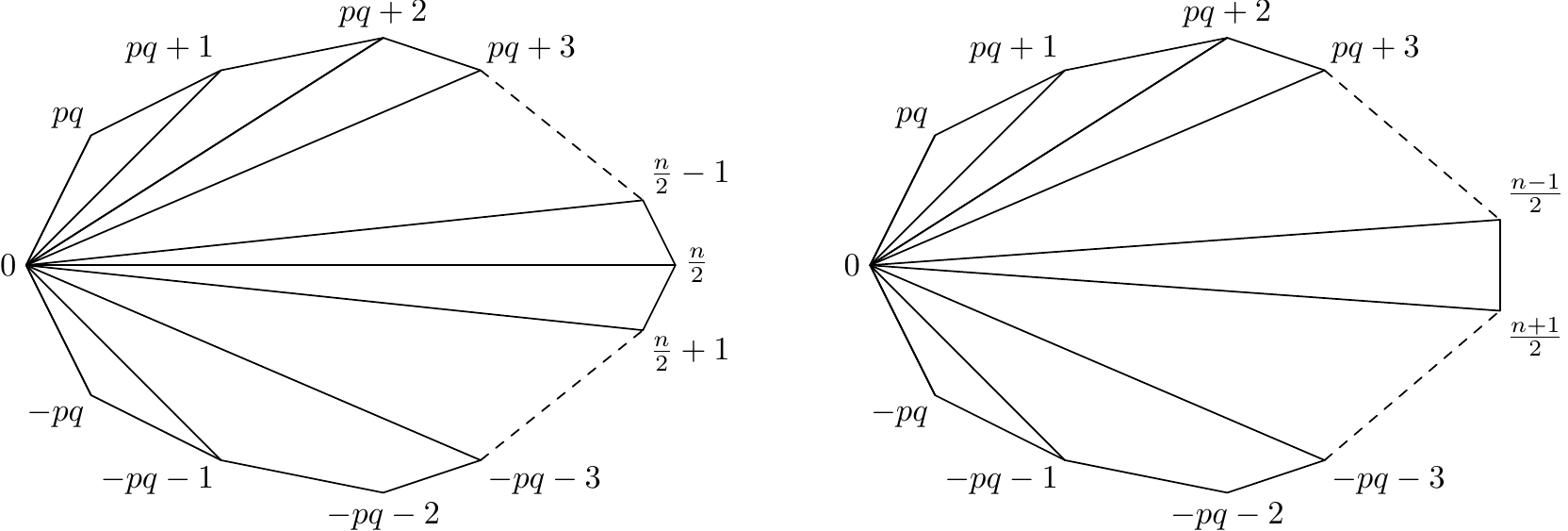} 
			\caption{The meridian disc of $\operatorname{F}_3 (p,q,r)$ for 
      $n = 2pq +r$ even (left) and odd (right). \label{fig:m3}}
		\end{center}
	\end{figure}
\end{proof}

\begin{lemma}
	\label{lem:Bpqr}
	Let $\mathscr{B}$ be a thickened version of the complex 
  $\operatorname{B}(p,q,r)$, $r>0$,
	such that $\mathscr{B}$ is orientable and all boundary components of
  $\operatorname{B}(p,q,r)$ are disjoint. Then
	$$\mathscr{B} \cong \mathbf{S}^1 \times \mathbb{S}_{\frac12 (a-1)(b-1)}^{b+a+1},$$
	where $\mathbb{S}_g^m$ is the $m$-punctured orientable surface of genus $g$.
\end{lemma}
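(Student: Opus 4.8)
The plan is to realise $\mathscr{B}$ as the total space of an oriented circle bundle over a compact surface $\Sigma$ with non-empty boundary, and then to invoke the standard fact that such a bundle is necessarily trivial, so that $\mathscr{B}\cong\mathbf{S}^1\times\Sigma$; it then remains only to identify $\Sigma$. Triviality holds because a compact surface with boundary is homotopy equivalent to a one-dimensional complex, whence its second cohomology vanishes and the Euler class of any oriented $\mathbf{S}^1$-bundle over it is forced to be zero. Since $\mathscr{B}$ is orientable by hypothesis and the fibre $\mathbf{S}^1$ is orientable, the base $\Sigma$ is orientable as well, so $\Sigma\cong\mathbb{S}_g^{m}$ for some genus $g$ and some number of boundary components $m$, and the whole lemma reduces to computing $g$ and $m$.

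First I would make the fibration explicit. The three difference cycles defining $\operatorname{B}(p,q,r)$ each share the single entry $1$, and reading off the corresponding edges of the triangulation exhibits a family of parallel combinatorial circles running in the ``vertical'' fibre direction (this is the sense in which, as remarked after Theorem~\ref{thm:brieskornSpheres}, the difference cycles already reveal where the fibres run). Using the transitive cyclic symmetry, so that it suffices to inspect a single fundamental domain, I would verify that every tetrahedron of $\operatorname{B}(p,q,r)$ is swept out by these circles and that the projection collapsing each circle to a point is a genuine locally trivial $\mathbf{S}^1$-fibration with no singular fibres — the latter being exactly the statement that all exceptional fibres have been removed into $\operatorname{F}_1$, $\operatorname{F}_2$ and $\operatorname{F}_3$. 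Passing to the thickening $\mathscr{B}$, chosen so that the boundary tori are disjoint and the result is orientable, preserves this fibred structure.

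Next I would determine $m$ and $g$. The number of boundary components is immediate from Lemma~\ref{lem:fibresMerDiscs}: $\partial\mathscr{B}$ is glued to $\operatorname{F}_1$, $\operatorname{F}_2$ and $\operatorname{F}_3$, which contribute $b$, $a$ and $1$ boundary tori respectively, and each such torus is fibred as the product of $\mathbf{S}^1$ with a boundary circle of $\Sigma$; hence $m=a+b+1$. For the genus I would push the triangulation of $\operatorname{B}(p,q,r)$ down through the fibration to obtain an explicit cell decomposition of $\Sigma$ and compute its Euler characteristic. The decisive point is that the way the vertex-orbits of the three difference cycles organise into the $a+b+1$ boundary circles is controlled precisely by $a=\gcd(p,r)$ and $b=\gcd(q,r)$, exactly as in the collapsing analysis of Lemma~\ref{lem:fibresMerDiscs}, and a count of the resulting vertices, edges and two-cells yields $\chi(\Sigma)=-ab$. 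Solving $2-2g-(a+b+1)=-ab$ then gives $g=\tfrac12(a-1)(b-1)$, which is the claim.

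The main obstacle I anticipate is twofold. First, one must certify that the combinatorial projection really is a fibre bundle over a surface rather than merely a map to a two-complex: this requires checking that the preimage of every point is a single circle and that the fibration is locally trivial, with no interior fibre degenerating, which is the most delicate bookkeeping and is precisely where the orientability and the thickening hypotheses are genuinely used. Second, the Euler-characteristic count must be carried out so that the dependence on the greatest common divisors $a$ and $b$ is transparent; because the same-looking base complex yields surfaces of different genus as $r$, and hence $a$ and $b$, varies, the counting cannot be performed uniformly in $p,q,r$ alone and must track how the gcd's split the boundary into components, mirroring the Euclidean-algorithm collapses already exploited for $\operatorname{F}_1$ and $\operatorname{F}_2$.
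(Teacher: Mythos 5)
Your proposal follows essentially the same route as the paper: both identify the $\mathbf{S}^1$-fibres combinatorially, obtain the $a+b+1$ boundary tori from Lemma~\ref{lem:fibresMerDiscs}, and pin down the base surface by an Euler characteristic count $\chi(S)=-ab$, whence $g=\frac12(a-1)(b-1)$. The one real difference is how the product structure is certified: the paper constructs an explicit horizontal copy of $S$ inside $\operatorname{B}(p,q,r)$ (the cut through the distinguished vertices $0\leq\beta<ab$ of the vertical lines, Figure~\ref{fig:Bpqr_section}) and checks directly that it is orientable and thickens to a product, whereas you first exhibit a circle fibration and then invoke vanishing of the Euler class over a surface with boundary. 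Your shortcut is legitimate but needs one repair: the Euler-class argument classifies the bundle only once you know it is orientable \emph{as a bundle} (the fibre orientations can be chosen coherently), and orientability of the total space together with orientability of the fibre does not by itself force the base to be orientable --- the fibre-orientation-reversing circle bundle over the M\"obius band has orientable total space and is non-trivial. So you must still verify fibre-orientability (and hence base orientability) directly from the combinatorics; this is exactly the role played in the paper's proof by the explicit section $S$ and the consistent clockwise orientation of its boundary circles.
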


\begin{proof}
	In essence, we read off the diagrams given in Figures~\ref{fig:Bpqr_rep} 
  and \ref{fig:Bpqr}.
	The rest of the proof consists of details and bookkeeping.

  \medskip
  $\operatorname{B}(p,q,r)$ consists of three difference 
  cycles of full length and	hence contains $3n = 6pq + 3r$ tetrahedra. These 
  split into $n$ disjoint systems	of representatives for the difference 
  cycles of $3$ tetrahedra each.  One of these systems of representatives is 
  given by
	$$ \langle 0,1,mp,pq \rangle, \langle 1,mp,pq,pq+1 \rangle \textrm{ and } 
    \langle mp,pq,pq+1,p(q+m) \rangle .$$
  Figure \ref{fig:Bpqr_rep} illustrates how these $n$ groups 
  of $3$ tetrahedra can be	stacked onto the fundamental domain of the boundary
  torus
	$$ \begin{array}{rcl}
		\partial \operatorname{F}_3 (p,q,r) &=& \{ (1 : pq-1 : r + pq ), 
      (1 : r+pq : pq-1) \} \\
		&=& \{ \langle 0,1,pq \rangle, \langle 1,pq,pq+1 \rangle, \ldots \} .
	\end{array}$$
	Here two vertically neighbouring groups are glued together along the triangles
	$\langle pq, pq+1, p(q+m) \rangle$ and their translates, and 
	the complex $\operatorname{B}(p,q,r)$ is obtained by identifying pairs of 
  vertical edges of the resulting complex 
	given in Figure \ref{fig:Bpqr} where each vertical edge of 
  $\partial \operatorname{F}_3 (p,q,r)$ (for example $\langle 0, pq \rangle$ 
	and $\langle 1, pq+1 \rangle$ in the lower left corner) is glued to the 
  unique vertical edge with the corresponding vertex labels
	not touching the fundamental domain (for example $\langle mp, p(q+m) \rangle$). 
  Note that it already follows from the 
	cyclic symmetry that exactly $n$ of these pairs of vertical edges exist.

	\begin{figure}[h!]
		\begin{center}
			\includegraphics[width=0.7\textwidth]{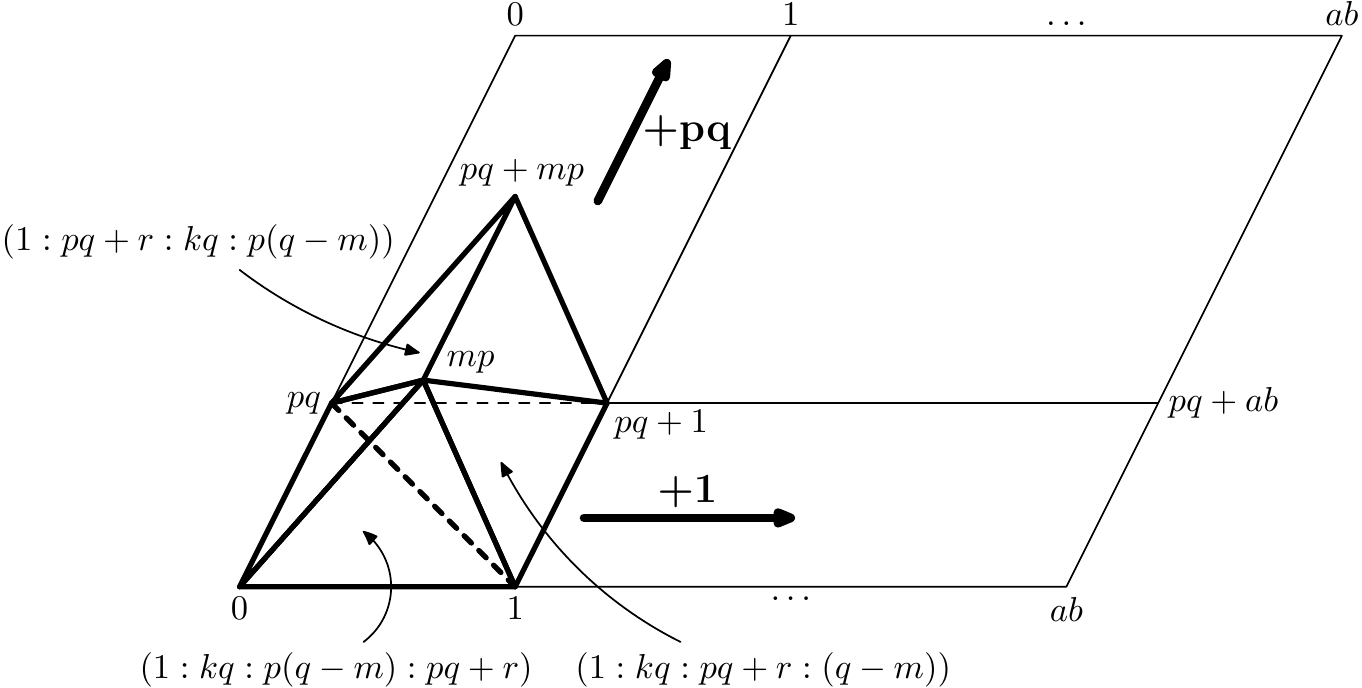} 
			\caption{System of representatives of the three difference cycles of 
        $\operatorname{B}(p,q,r)$. \label{fig:Bpqr_rep}}
		\end{center}
	\end{figure}
	This construction together with Lemma \ref{lem:fibresMerDiscs} gives rise 
  to a complex with $b + a + 1$ boundary tori where all
	the boundary tori run vertically relative to the fundamental domain given in 
  Figure \ref{fig:Bpqr}. A more schematic drawing
	of $\operatorname{B}(p,q,r)$ together with its boundary tori is given in 
  Figure \ref{fig:Bpqr_main}.

	\begin{figure}[ht]
		\begin{center}
			\includegraphics[width=0.9\textwidth]{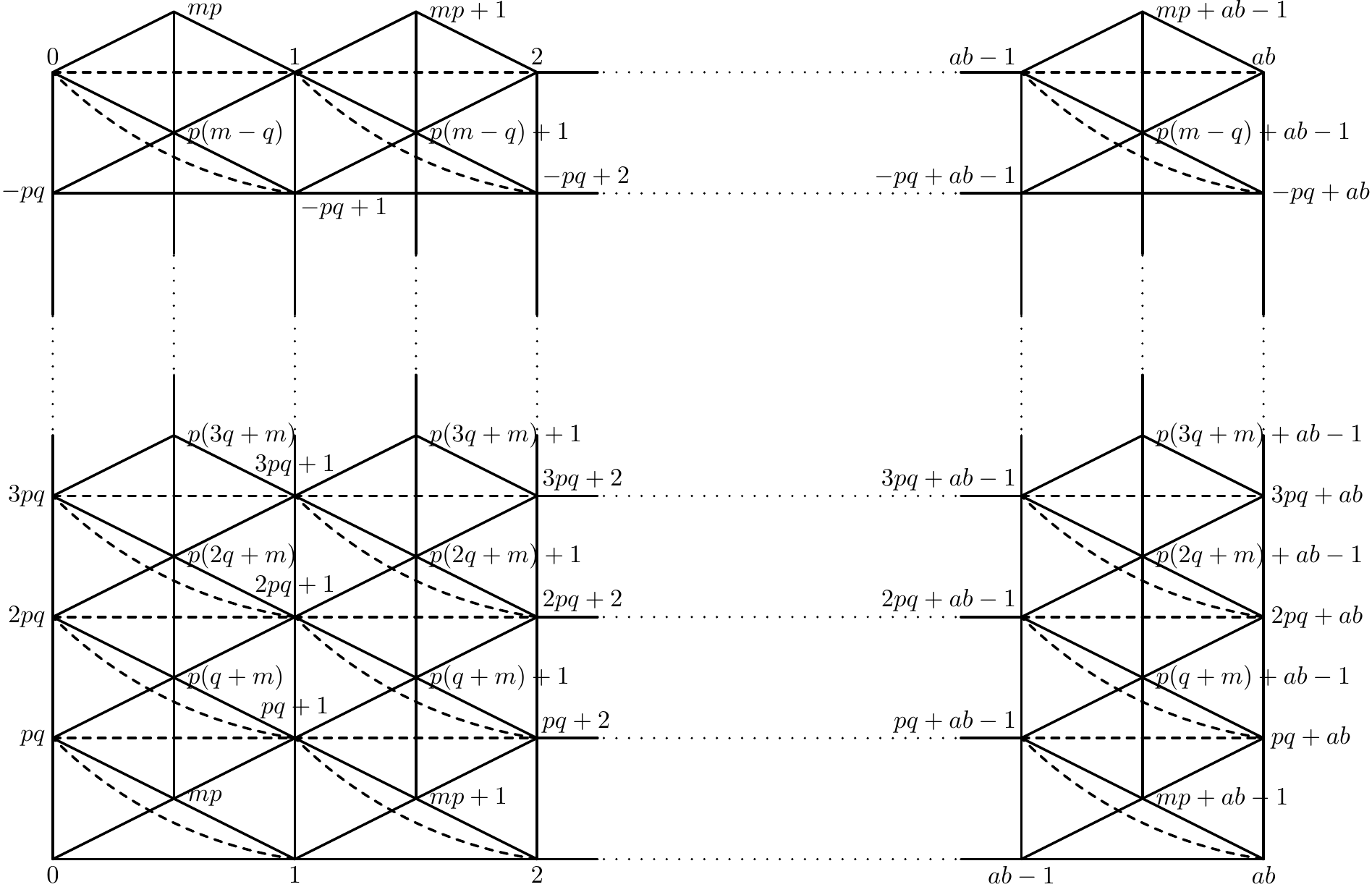} 
			\caption{$\operatorname{B}(p,q,r)$ after gluing  
      tetrahedra along common boundary faces. 
			Triangles at the top (e.g., 
      $\langle 0,1,mp \rangle$) are glued to the bottom ones. 
			Edges with equally labelled endpoints are identified. \label{fig:Bpqr}}
		\end{center}
    \vspace{-.05cm}
	\end{figure}

	This already tells us that $\operatorname{B}(p,q,r)$ is the Cartesian 
  product of a circle with $S$, where $S$ is a closed surface minus $b+a+1$ 
  discs. $S$ runs horizontally relative to the fundamental domain 
  (i.e., $S$ meets $\partial \operatorname{F}_3(p,q,r)$ 
	in a curve of the same homotopy class as the horizontal line in the 
  fundamental domain, plus a necessary vertical shift 
	at the right hand side to close it); and the circle runs vertically. 
	In order to see what $S$ looks like, we must pay attention to how exactly 
  the vertical edges of the complex
	are glued together. A basic observation exploiting the cyclic symmetry of the
	complex tells us that every vertical line in Figure \ref{fig:Bpqr} of $\operatorname{B}(p,q,r)$ contains a single
	vertex $\beta$ such that $0 \leq \beta < ab$. For the vertical lines touching the fundamental domain of $\partial \operatorname{F}_3(p,q,r)$
	these are at the very bottom except for at the rightmost line where vertex $0$ is shifted by $\alpha$, where $\alpha$ describes how the
	vertical boundary parts of the fundamental domain of $\operatorname{F}_3(p,q,r)$ are shifted in order to be glued together
	to build a torus. For the other vertical lines the unique vertex label $\beta$, $0 \leq \beta < ab$, is shifted by $\frac{n}{ab} - \gamma$ or $\frac{n}{ab} - \gamma + \alpha$,
	where $\gamma$ describes the vertical distance (modulo $\frac{n}{ab}$) of $\beta$ and vertex $mp$. Figure \ref{fig:Bpqr_section} shows 
	the cut through $\operatorname{B}(p,q,r)$ containing all of these vertices and thus resulting in a simple representation of the base surface $S$.

	Note that $S$ after identifying vertices with equal labels contains exactly 
  $b+a+1$ edge disjoint boundary circles such that each belongs to a 
	unique connected component of $\operatorname{F}_i(p,q,r)$, $1 \leq i \leq 3$. 
  Each of these boundary circles can be given an orientation such
	that each of their edges is oriented clockwise in the drawing of $S$ given 
  in Figure \ref{fig:Bpqr_section}. It follows that $S$, and 
	hence $\operatorname{B}(p,q,r)$, can be thickened to give a bounded 
  $3$-manifold $\mathscr{B}$ homeomorphic to the 
	Cartesian product of the circle with an oriented surface with $b+a+1$ 
  punctures. Note that $S$ has $ab$ vertices, $3ab$ edges and $ab$ triangles, 
  hence Euler characteristic $\chi (S) = -ab$, and we have
  $S \cong \mathbb{S}_{\frac12 (a-1)(b-1)}^{b+a+1}$.
\end{proof}

\begin{lemma}
	\label{lem:Bpq0}
	Let $\mathscr{B}$ be a thickened version of 
  $\operatorname{B}(p,q,0) \cup \operatorname{F}_3(p,q,0)$, with
  a slightly thickened version of $\operatorname{F}_3(p,q,0)$ drilled out,
	such that $\mathscr{B}$ is orientable and all boundary components of
  $\operatorname{B}(p,q,0)$ are disjoint. Then
	$$\mathscr{B} \cong \mathbf{S}^1 \times \mathbb{S}_{\frac12 (p-1)(q-1)}^{p+q+1},$$
	where $\mathbb{S}_g^m$ is the $m$-punctured orientable surface of genus $g$.
\end{lemma}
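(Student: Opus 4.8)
The strategy is to mirror the proof of Lemma~\ref{lem:Bpqr} almost verbatim, with the crucial modification that in the limit case $r=0$ the complex $\operatorname{F}_3(p,q,0)$ degenerates from a genuine solid torus into the single short difference cycle $(1 : pq-1 : 1 : pq-1)$ — a pinched solid torus. The key structural observation is that, by drilling out a thickened $\operatorname{F}_3(p,q,0)$, we replace the single $\operatorname{F}_3$-boundary component present in the $r>0$ case by two new boundary components coming from the two ``strands'' of the pinched torus. This is precisely what accounts for the change in both the genus and the number of punctures: the surface $S$ now has $p+q+1$ boundary circles rather than $b+a+1$, and genus $\tfrac12(p-1)(q-1)$ rather than $\tfrac12(a-1)(b-1)$.

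First I would set up the same fundamental-domain picture as in Lemma~\ref{lem:Bpqr}: the complex $\operatorname{B}(p,q,0)$ still consists of three full-length difference cycles, contributing $3n = 6pq$ tetrahedra that split into $n = 2pq$ systems of representatives of three tetrahedra each, stacked onto the fundamental domain exactly as in Figure~\ref{fig:Bpqr}. The vertical-edge identifications are governed by the same cyclic-symmetry bookkeeping. Note that at $r=0$ we have $a = \gcd(p,0) = p$ and $b = \gcd(q,0) = q$, so that the formula $\tfrac12(a-1)(b-1)$ specialises to $\tfrac12(p-1)(q-1)$ and $b+a+1$ specialises to $p+q+1$; thus, in effect, the count ``wants'' to hold automatically once one verifies that $\operatorname{F}_1$ and $\operatorname{F}_2$ still contribute $q$ and $p$ boundary tori respectively (which follows from Lemma~\ref{lem:fibresMerDiscs} with $a=p$, $b=q$), and that $\operatorname{F}_3(p,q,0)$ contributes exactly one further boundary circle after the drilling.

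The main obstacle will be the careful treatment of the pinched locus. Unlike the $r>0$ case, where $\partial\operatorname{F}_3(p,q,r)$ is an embedded torus, here $\operatorname{F}_3(p,q,0)$ is a solid torus pinched along $pq$ edges, so drilling out a \emph{slightly} thickened version requires justifying that a regular neighbourhood of the pinched complex is genuinely a solid torus (so that removing it leaves a single torus boundary component) rather than something with more complicated boundary. I would argue that the short difference cycle, once thickened, retracts to a circle, and that its regular neighbourhood in the orientable thickening $\mathscr{B}$ is an unknotted solid torus meeting the rest of the complex in a single annular boundary; the pinch points disappear upon passing to the thickening. This is the one place where the $r=0$ argument genuinely differs from and is more delicate than the $r>0$ argument, and it is where I expect most of the real work to lie.

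Finally I would complete the Euler-characteristic computation exactly as before. The base surface $S$ obtained by the horizontal cut (the analogue of Figure~\ref{fig:Bpqr_section}) now has $ab = pq$ vertices, $3pq$ edges and $pq$ triangles, giving $\chi(S) = pq - 3pq + pq = -pq$. With $p+q+1$ boundary circles, solving $\chi = 2 - 2g - (\text{number of punctures})$ yields $-pq = 2 - 2g - (p+q+1)$, hence $g = \tfrac12(p-1)(q-1)$, confirming $S \cong \mathbb{S}_{\frac12(p-1)(q-1)}^{\,p+q+1}$ and therefore $\mathscr{B} \cong \mathbf{S}^1 \times \mathbb{S}_{\frac12(p-1)(q-1)}^{\,p+q+1}$ as claimed.
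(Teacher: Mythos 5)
Your overall plan is the paper's own: reduce to the argument of Lemma~\ref{lem:Bpqr} with $a=\gcd(p,0)=p$ and $b=\gcd(q,0)=q$, treat the degenerate $\operatorname{F}_3(p,q,0)$ separately, and recover the genus from $\chi(S)=-pq$ together with the boundary count $p+q+1$. Your closing Euler-characteristic computation and your identification of the pinched locus as the only genuinely new difficulty both match the paper, which likewise notes that the drilling only works ``as long as $\operatorname{B}(p,q,0)$ has been sufficiently thickened near the edges $\langle \ell, pq+\ell\rangle$''.

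There is, however, one genuine error in your first paragraph: the claim that drilling out the thickened $\operatorname{F}_3(p,q,0)$ replaces one boundary component by \emph{two}, coming from ``two strands'' of the pinched torus, and that this ``accounts for the change in genus and number of punctures''. Both halves of this are wrong. First, there is no change to account for: the formulas $\tfrac12(a-1)(b-1)$ and $a+b+1$ specialise verbatim to $\tfrac12(p-1)(q-1)$ and $p+q+1$ once $a=p$, $b=q$; what changes is only the geometric realisation of $\operatorname{F}_3$. Second, the regular neighbourhood of the pinched solid torus (a necklace of $pq$ tetrahedra retracting to a circle) is a single unknotted solid torus, so drilling it out contributes exactly \emph{one} torus boundary component --- which is what the paper shows (the two rows of Figure~\ref{fig:Bpqr} fold onto each other leaving $pq$ tetrahedron-shaped holes filled by $\operatorname{F}_3(p,q,0)$, and removing its thickening leaves a single boundary torus at the bottom of the picture), and which is also what your own later sentence (``$\operatorname{F}_3(p,q,0)$ contributes exactly one further boundary circle'') and your final count $q+p+1$ require. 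If $\operatorname{F}_3$ contributed two components you would get $p+q+2$ punctures and the genus computation would fail. You should delete the ``two strands'' claim and replace it with the (correct) argument you sketch in your third paragraph; with that repair the proof is essentially the paper's.
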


\begin{proof}
  The proof is largely analogous to the proof of Lemma~\ref{lem:Bpqr} with
  some minor changes. 

  Since $r=0$ we have $a = \gcd (p,0) = p$ and
  $b = \gcd (q,0) = q$. Hence Figure~\ref{fig:Bpqr} has only
  two rows (note that $pq + ab = 2pq = 0$), where the top-row is identified with
  the bottom row by folding them up, leaving $pq$ tetrahedron-shaped holes
  with boundaries of type
  $$ \left \langle \langle \ell, \ell+1 , pq + \ell \rangle, 
     \langle \ell, \ell+1 , pq + 1+ \ell \rangle, 
     \langle 0, pq+\ell, pq+1+ \ell \rangle, 
     \langle 1, pq+\ell, pq+1+ \ell \rangle \right \rangle$$
  for $0 \leq \ell \leq pq-1$ which, in $\operatorname{M}(p,q,0)$, are filled
  with the $pq$ tetrahedra
  of $\operatorname{F}_3(p,q,0) = \{ (1\,:\, pq-1 \,:\, 1 \,:\, pq-1) \}$.
  Drilling out a slightly thickened version of $\operatorname{F}_3(p,q,0)$
  leaves us with a torus boundary component on the bottom of 
  Figure~\ref{fig:Bpqr} (as long as $\operatorname{B}(p,q,0)$ has 
  been sufficiently thickened before near the edges $\langle \ell, pq + \ell \rangle$, 
  $0\leq \ell \leq pq-1$).
	\begin{figure}[h!]
		\begin{center}
			\includegraphics[width=\textwidth]{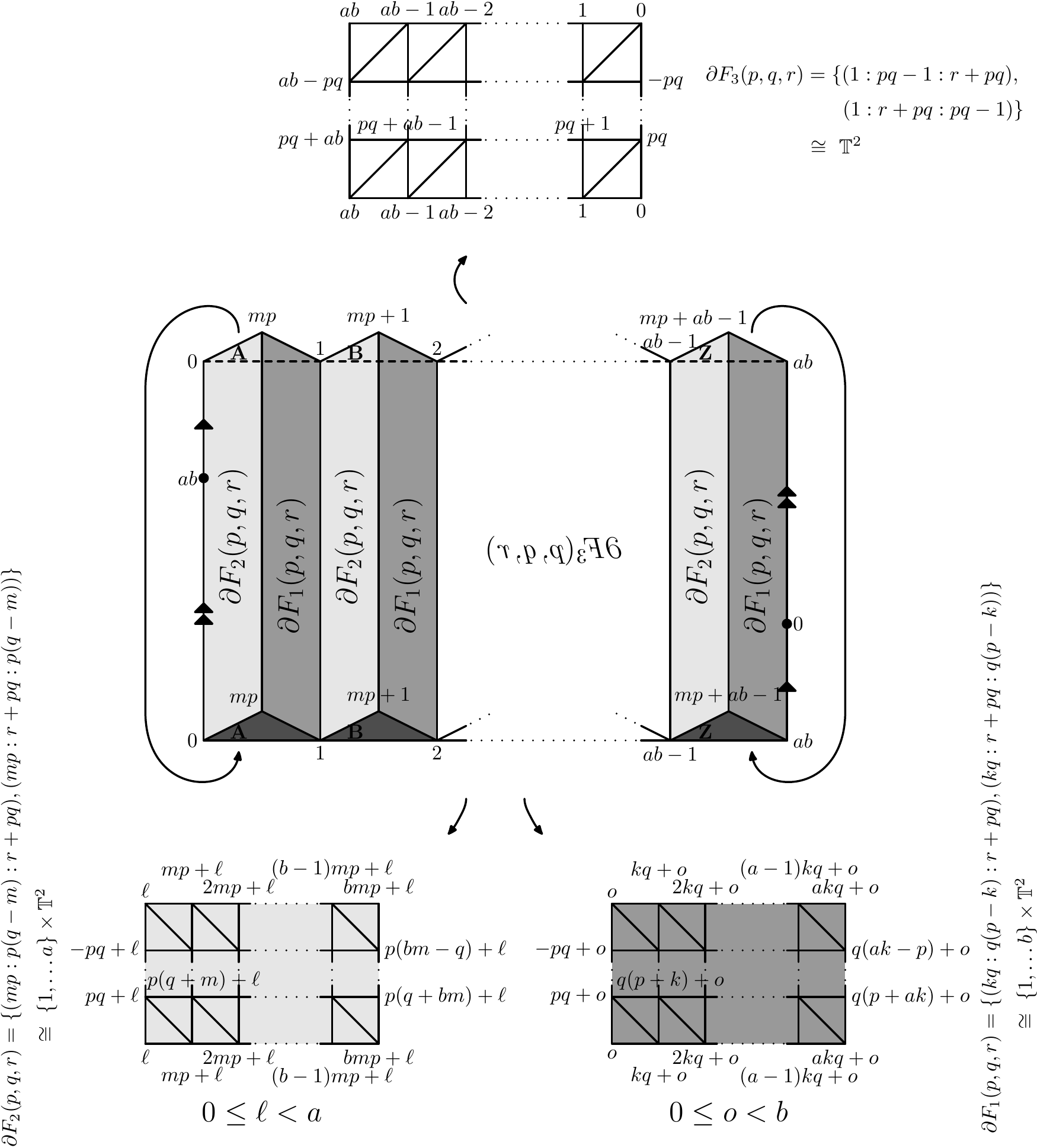} 
			\caption{The $(a+b+1)$ boundary tori of $\operatorname{B}(p,q,r)$. 
        \label{fig:Bpqr_main}}
		\end{center}
	\end{figure}

  Hence we get a space with $a+b+1 = p+q+1$ boundary tori. A two-row version of 
  Figure \ref{fig:Bpqr_main} shows the complex before thickening and drilling.
  As in the proof of Lemma~\ref{lem:Bpqr} the boundary tori run 
  vertically relative to the fundamental domain.

  This tells us that $\mathscr{B}$ is the Cartesian 
  product of a surface minus $p+q+1$ disjoint discs $S$ with a circle,
	where $S$ is running horizontally relative to the fundamental domain.
  The hypothesis now follows analogously with the shifts $\alpha = -1$,
  $\beta = mp$ and $\gamma = 0$.
\end{proof}

\begin{lemma}
	\label{lem:bdryCurves}
	Relative to the fundamental domain and base orbifold chosen in Figure 
  \ref{fig:Bpqr_section}, the types of the
	exceptional fibres for $r>0$ are 
	\begin{itemize}
		\item $(-\frac{p}{a}, \frac{ab}{n}(p \gamma  - 
      \frac{p \alpha(\beta-1)}{ab}- k))$ for the $b$ exceptional fibres of 
      $\operatorname{F}_1(p,q,r)$, 
		\item $(\frac{q}{b}, \frac{ab}{n}(q \gamma  - 
      \frac{q \alpha \beta}{ab}- m))$ for the $a$ exceptional fibres of 
      $\operatorname{F}_2(p,q,r)$, and
		\item $(-\frac{r}{ab}, \frac{ab}{n}(2-\frac{\alpha r}{ab}))$ for the 
      exceptional fibre of $\operatorname{F}_3(p,q,r)$, 
	\end{itemize}
	where 
	\small
	$$\alpha = -\left ( \left ( \frac{pq}{ab} \right )^{-1} \mod \frac{n}{ab} \right ), 
    \qquad \beta = mp \mod ab \quad \textrm{ and } \quad 
    \gamma = \left ( \frac{pq}{mp-\beta} \right )^{-1} \mod \frac{n}{ab}$$
	\normalsize
	are the shifts defining the identifications in $\operatorname{B} (p,q,r)$ as 
  shown in Figure \ref{fig:Bpqr_section}.

  \medskip
  For $r=0$ we get $q$ fibres of type $(-1,0)$, $p$ fibres of type $(1,0)$, and
  one fibre of type $(0,1)$.
\end{lemma}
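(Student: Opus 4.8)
The plan is to obtain each Seifert invariant by writing the boundary of the relevant meridian disc in the basis of the boundary torus induced by the fibration. Recall that the Seifert invariant $(\alpha_i,\beta_i)$ of an exceptional fibre records the slope at which the meridian disc of the glued-in solid torus meets the fibred boundary: if $h$ denotes a regular fibre and $s_i$ a section curve (a boundary circle of the base surface $S$ of Lemma~\ref{lem:Bpqr}), then on $\partial\operatorname{F}_i(p,q,r)$ one has $[\partial m_i]=\alpha_i[s_i]+\beta_i[h]$ up to orientation conventions. Thus the whole computation reduces to expressing the explicit curves $\partial m_1^{(i)}$, $\partial m_2^{(j)}$ and $\partial m_3$ from Lemma~\ref{lem:fibresMerDiscs} in the coordinates supplied by the product structure $\mathscr{B}\cong\mathbf{S}^1\times S$.

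First I would fix this coordinate dictionary. By Lemma~\ref{lem:Bpqr} the $\mathbf{S}^1$-factor is the regular fibre $h$, which in the fundamental domain of Figure~\ref{fig:Bpqr_section} runs vertically and is realised by the label shift $v\mapsto v+\tfrac{n}{ab}$ (note $ab\mid n$, since $\gcd(a,b)=1$ while $a\mid n$ and $b\mid n$); the horizontal direction carries the section $s_i$. Accordingly a vertex $v\in\mathbb{Z}_n$ is placed in the domain by the pair consisting of its column $v\bmod ab$ (one of the $ab$ vertices of $S$) and its height modulo $\tfrac{n}{ab}$, and the shifts $\alpha$, $\beta$, $\gamma$ of Figure~\ref{fig:Bpqr_section} are precisely the vertical offsets that appear when the identified vertical edges of the domain are glued. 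With this dictionary a combinatorial loop is translated into a pair (net horizontal winding, net vertical displacement), and converting the vertical displacement from label-steps into complete fibre loops is exactly division by $\tfrac{n}{ab}$, which accounts for the factor $\tfrac{ab}{n}$ appearing in every second coordinate.

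Next I would run the count on each family. The first invariant is the coefficient $\alpha_i$ of the section, equivalently the intersection number $[\partial m_i]\cdot[h]$, which is read off as the net horizontal progress of the curve through the columns of Figure~\ref{fig:Bpqr_section}; tracking the sequences of Lemma~\ref{lem:fibresMerDiscs} gives $-\tfrac{p}{a}$ for each of the $b$ curves $\partial m_1^{(i)}$, $\tfrac{q}{b}$ for each of the $a$ curves $\partial m_2^{(j)}$, and $-\tfrac{r}{ab}$ for $\partial m_3$. The substantive part is the second invariant $\beta_i$: here one traverses the explicit vertex sequence step by step, accumulating the vertical shift $\alpha$ whenever the curve crosses the glued seam of $\partial\operatorname{F}_3$ and a shift governed by $\beta$ and $\gamma$ whenever it crosses an interior identified edge, together with the residual terms $k$ and $m$ coming from the relation $mp-kq=1$. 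Collecting the contributions and multiplying by $\tfrac{ab}{n}$ yields the stated values $\tfrac{ab}{n}\bigl(p\gamma-\tfrac{p\alpha(\beta-1)}{ab}-k\bigr)$, $\tfrac{ab}{n}\bigl(q\gamma-\tfrac{q\alpha\beta}{ab}-m\bigr)$ and $\tfrac{ab}{n}\bigl(2-\tfrac{\alpha r}{ab}\bigr)$.

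The main obstacle is exactly this vertical bookkeeping for $\beta_i$: one must fix compatible orientations for $s_i$ and $h$, count correctly how many interior seams each meridian crosses, and confirm that each crossing contributes the advertised combination of $\alpha$, $\beta$, $\gamma$. A convenient global check that catches sign and normalisation errors is that the resulting triple must satisfy the well-definedness relation $\bigl(\tfrac{b_1}{p}-\tfrac{b_2}{q}+\tfrac{b_3}{r}\bigr)\tfrac{pqr}{ab}=1$ of Lemma~\ref{lem:isomorphic}. Finally the limit case $r=0$ is handled by specialising the same argument to $a=p$, $b=q$ with the degenerate shifts $\alpha=-1$, $\beta=mp$, $\gamma=0$ recorded in Lemma~\ref{lem:Bpq0}: the curves $\partial m_1^{(i)}$ and $\partial m_2^{(j)}$ then give the $q$ fibres of type $(-1,0)$ and the $p$ fibres of type $(1,0)$, while $\operatorname{F}_3(p,q,0)$ degenerates from a solid torus to the pinched complex, so its fibre becomes the exceptional slope $(0,1)$.
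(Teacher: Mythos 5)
Your overall strategy is the same as the paper's: realise $\mathscr{B}$ as $\mathbf{S}^1\times S$, read off the first Seifert coordinate as the horizontal winding of the explicit curves $\partial m_1^{(i)}$, $\partial m_2^{(j)}$, $\partial m_3$ from Lemma~\ref{lem:fibresMerDiscs}, and obtain the second coordinate as the net vertical displacement divided by the $\tfrac{n}{ab}$ rows of the fundamental domain, with the $r=0$ case handled by specialising $\alpha=-1$, $\beta=mp$, $\gamma=0$. Two problems, though. First, your coordinate dictionary is set up wrongly: in Figure~\ref{fig:Bpqr} a column is the orbit of a vertex under $v\mapsto v+pq$ (the left boundary is $\langle 0,pq,2pq,\ldots\rangle$), so the fibre direction is the label shift $v\mapsto v+pq$, whose orbit has length $n/\gcd(pq,n)=n/(ab)$ --- not the shift $v\mapsto v+\tfrac{n}{ab}$, which in general does not even preserve columns (e.g.\ for $(p,q,r)=(2,3,6)$ one has $n/(ab)=3$ while the column of $0$ is $\{0,6,12\}$). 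The residue $v\bmod ab$ does identify the column, and the number of rows $n/(ab)$ is indeed the source of the factor $\tfrac{ab}{n}$, so this may be a slip, but as written the dictionary is incorrect.

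Second, and more seriously, the derivation of the second coordinates --- which is the entire content of the lemma --- is asserted rather than carried out. You write that one ``accumulates'' $\alpha$ at seam crossings and contributions ``governed by $\beta$ and $\gamma$'' at interior edges, and that collecting these ``yields the stated values''; but the specific combination $p\gamma-\tfrac{p\alpha(\beta-1)}{ab}-k$ has to be produced, not quoted. The paper's proof does this concretely for $\partial m_1^{(i)}$: the final $k$ steps of $-pq$ give a vertical length of $-k$ rows; each of the $p$ crossings from $\partial\operatorname{F}_1$ into $\partial\operatorname{F}_3$ contributes $+\gamma$ rows; and the $p$ horizontal displacements of $\beta-1$ columns each accumulate, through the $\alpha$-shift of the left/right identification, a further $-\tfrac{p\alpha(\beta-1)}{ab}$ rows. (In particular the $-k$ and $-m$ terms come from the shape of the meridian curves themselves, not from the relation $mp-kq=1$ as you suggest.) Your proposed sanity check against Lemma~\ref{lem:isomorphic} is sensible --- the paper performs exactly that verification in the proof of Theorem~\ref{thm:brieskornSpheres} --- but it cannot substitute for the computation, since it only constrains the triple $(b_1,b_2,b_3)$ up to the moves of Lemma~\ref{lem:isomorphic} rather than determining each entry.
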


\begin{proof}
	Again, we proof the statement by looking at Figures~\ref{fig:Bpqr_main} 
  and \ref{fig:Bpqr_section}.

	\medskip
	An exceptional fibre is of type $(a,b)$ if the meridian disc of its solid torus neighbourhood 
	is glued to a closed curve in the corresponding boundary torus of $\operatorname{M}(p,q,r)$ 
	which wraps $a$ times around the torus in the direction of $S$
	(this is referred to as {\em the horizontal direction}) and $b$ times 
  in the direction of the fibres ({\em the vertical direction}).

	In order to determine the exact types of exceptional fibres in $\operatorname{M} (p,q,r)$ 
	we have to specify exactly how the vertical lines in the
	fundamental domain of $\operatorname{B} (p,q,r)$ (as shown for example in Figure 
	\ref{fig:Bpqr_main}) are identified. First of all the top boundary
	$\langle 0, 1, \ldots , ab \rangle$ is identified with the bottom boundary 
	$\langle 0, 1, \ldots , ab \rangle$  without any shift and as 
	indicated by the vertex labels. The left boundary $\langle 0, pq, 2pq, \ldots , 0 \rangle$ 
	is identified with the right boundary 
	$\langle ab, pq+ab, 2pq+ab, \ldots , ab \rangle$ by shifting the right boundary {\em down} 
	by $\alpha$ rows (cf. \ref{fig:Bpqr_section}).
	Now the vertical lines of type 
	$\langle mp, pq+mp, 2pq+mp, \ldots , mp \rangle \subset \partial \operatorname{F}_i(p,q,r)$, $1 \leq i \leq 2$, 
	are glued to 
	their counterparts in $\partial \operatorname{F}_3(p,q,r)$ by shifting them $\beta$ columns 
	to the {\em right} and $\gamma$ rows {\em up}  (cf. \ref{fig:Bpqr_section}).

  Finally, we assign a positive orientation to all fibres that run from the bottom to the 
	top of the fundamental domain and to all horizontal paths which run from the left to
	the right on the front (boundary components of $\operatorname{F}_1(p,q,r)$ and $\operatorname{F}_2(p,q,r)$) 
	and hence from the right to the left in the back 
	($\partial \operatorname{F}_3(p,q,r)$) of $\operatorname{B}(p,q,r)$.

  \medskip
	Following this framework note that the boundary curves of the meridian discs $\partial m_1^{(i)}$, $0 \leq i \leq b-1$, 
	have exactly length $p$ in the horizontal direction, and that the fundamental domains of the corresponding boundary tori 
	(cf. Figure \ref{fig:Bpqr_main}) have exactly $a$ columns. Furthermore, $\partial m_1^{(i)}$ runs from the right 
	to the left and hence $\partial m_1^{(i)}$ wraps around the fundamental domain of $\operatorname{B}(p,q,r)$ exactly $-p/a$ times
	in the horizontal direction. Using the same reasoning we can see that $\partial m_2^{(j)}$, $0 \leq j \leq a-1$, wraps around the 
	fundamental domain of $\operatorname{B}(p,q,r)$ exactly $q/b$ times in the horizontal direction and $\partial m_3$ exactly $-r/(ab)$ times.

	To determine how often the boundary curves of the meridian discs wrap around the fundamental domain in the vertical direction,
	we have to carefully take into account the shifts $\alpha$, $\beta$ and $\gamma$ of the identifications of the vertical lines
	in $\operatorname{B}(p,q,r)$ (see Figure \ref{fig:Bpqr_section} for details).

	\begin{landscape}
	\begin{figure}[p]
		\begin{center}
			\includegraphics[width=1.2\textwidth]{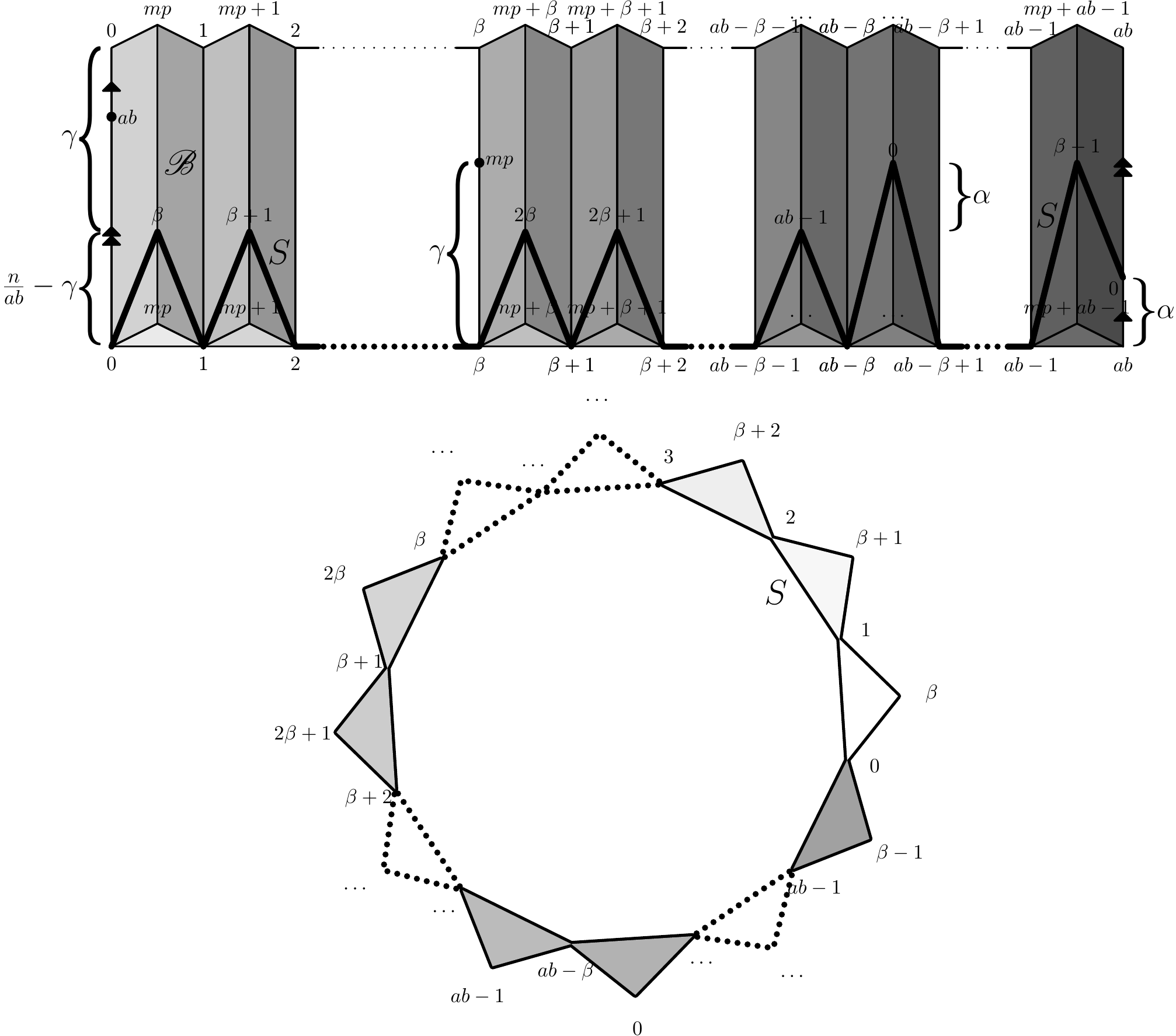} 
			\caption{Position of the base surface $S$ in the trivial $\mathbf{S}^1$-bundle $\mathscr{B}$. \label{fig:Bpqr_section}}
		\end{center}
	\end{figure}
	\end{landscape}

	The boundary curves of the meridian discs $\partial m_1^{(i)}$, $0 \leq i \leq b-1$, 
	have length $-k$ in the vertical direction and are shifted $p$ times 
	in the positive vertical direction by $\gamma$ rows. In addition to this, $\partial m_1^{(i)}$ runs 
	$p$ times half-columns in the negative horizontal direction followed by a shift 
	of $\beta - 1/2$ columns in the positive horizontal direction. This results in $p$ times a horizontal shift of $\beta -1$
	columns and for each horizontal shift of $ab$ columns in positive direction we have to add another vertical
	shift of $\alpha$ rows in the negative direction. In other words, $\partial m_1^{(i)}$ is shifted in the positive 
	vertical direction by exactly
	$$ p \cdot \gamma - \frac{p \alpha (\beta-1)}{ab} - k $$
	rows. The fact that all fundamental domains consist of $\frac{n}{ab}$ rows then proves the result.

	The vertical shifts of $\partial m_2^{(j)}$, $0 \leq j \leq a-1$, and $\partial m_3$ are computed in
	an analogous fashion. All together the exceptional fibres are as stated.

  \medskip
  For the case $r=0$, note that $pq/(ab) = 1$, $n/(ab) = 2$, and $mp < pq$. 
  Hence we get $\alpha = -1$, $\beta = mp$, $\gamma = 0$, $p+q$ fibres of
  type $(\pm 1, 0)$ and one fibre of type $(0,1)$.
\end{proof}

\begin{lemma}
  \label{lem:r0}
  Let $M = \mathbb{S}_g \times \mathbf{S}^1$ be the trivial 
  $\mathbf{S}^1$-bundle over the orientable surface of genus $g$, and let
  $M'$ be obtained from $M$ by performing surgery 
  of type $(0,1)$ along the $\mathbf{S}^1$-fibre. Then 
  $M' \cong (\mathbf{S}^2 \times \mathbf{S}^1)^{\# 2g}$.
\end{lemma}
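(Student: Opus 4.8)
The plan is to rewrite the $(0,1)$-surgery as an explicit solid-torus gluing and then read off the homeomorphism type from the fundamental group. Write $\Sigma = \mathbb{S}_g$, choose an embedded disc $D \subset \Sigma$ through the base point of the fibre, and set $F_0 = \Sigma \setminus \operatorname{int}(D)$, the orientable genus-$g$ surface with one boundary circle. A tubular neighbourhood of the fibre is $N = D \times \mathbf{S}^1$, so $M \setminus \operatorname{int}(N) = F_0 \times \mathbf{S}^1$, whose boundary torus $\partial F_0 \times \mathbf{S}^1$ carries the horizontal section curve $h = \partial F_0 \times \{\ast\}$ and the vertical fibre curve $f = \{\ast\} \times \mathbf{S}^1$. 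In the convention of Lemma~\ref{lem:bdryCurves}, surgery of type $(0,1)$ glues a solid torus $V = \mathbf{B}^2 \times \mathbf{S}^1$ so that its meridian $\mu_V = \partial\mathbf{B}^2 \times \{\ast\}$ is sent to $0\cdot h + 1\cdot f = f$. Hence $M' = (F_0 \times \mathbf{S}^1) \cup_\phi V$ with $\phi(\mu_V) = f$, and the effect of the surgery is exactly that the fibre $f$ comes to bound the meridian disc of $V$.

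First I would note that $M'$ is closed and orientable, being obtained from the orientable $M$ by Dehn surgery. Then I would compute $\pi_1(M')$ by van Kampen's theorem. Since $F_0$ is homotopy equivalent to a wedge of $2g$ circles, $\pi_1(F_0\times\mathbf{S}^1) = \pi_1(F_0)\times\mathbb{Z} = F_{2g}\times\langle z\rangle$, where $F_{2g} = \langle a_1,b_1,\dots,a_g,b_g\rangle$ and $z$ is the class of the fibre; under the boundary inclusion one has $[f]=z$. Gluing in $V$ attaches a meridian $2$-cell and a $3$-cell, so $\pi_1(M') = \pi_1(F_0\times\mathbf{S}^1)/\langle\!\langle \mu_V\rangle\!\rangle = (F_{2g}\times\langle z\rangle)/\langle\!\langle z\rangle\!\rangle$. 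As $z$ is central, killing it merely deletes the $\mathbb{Z}$-factor, giving $\pi_1(M')\cong F_{2g}$, a free group of rank $2g$. This conclusion is insensitive to the usual ambiguity of the surgery longitude, since only the class $\mu_V = f$ enters the quotient.

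Finally I would identify $M'$ from its fundamental group via the classification of closed orientable $3$-manifolds with free $\pi_1$. By the prime decomposition theorem, $M'$ is a connected sum of prime pieces whose groups free-multiply to $F_{2g}$. Each orientable prime summand is either $\mathbf{S}^2\times\mathbf{S}^1$, with group $\mathbb{Z}$, or irreducible; an irreducible orientable prime $3$-manifold with infinite fundamental group is aspherical by the sphere theorem, so its group has cohomological dimension $3$ and cannot be a nontrivial free group, while the only summand with trivial group is $\mathbf{S}^3$ by the Poincaré conjecture. Hence every summand is a copy of $\mathbf{S}^2\times\mathbf{S}^1$, and comparing ranks yields $M' \cong (\mathbf{S}^2\times\mathbf{S}^1)^{\#2g}$.

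The step I expect to be the main obstacle is pinning down the surgery convention, so that the $(0,1)$ slope really is $f$ (and not $h$ or $h+kf$): the whole argument turns on $f$ becoming nullhomotopic, so I would cross-check against the base case $g=0$, where $F_0 = \mathbf{B}^2$, $M = \mathbf{S}^2\times\mathbf{S}^1$, and $F_0\times\mathbf{S}^1$ and $V$ are two solid tori glued with their meridians meeting once, giving $\mathbf{S}^3 = (\mathbf{S}^2\times\mathbf{S}^1)^{\#0}$ as required. A secondary point is that the final identification leans on the classification theorem, hence on the Poincaré conjecture; if a self-contained argument is preferred, one can instead produce an explicit genus-$2g$ Heegaard splitting or argue inductively that each genus handle of $\Sigma$ contributes two $\mathbf{S}^2\times\mathbf{S}^1$ summands, at the cost of a considerably more laborious bookkeeping.
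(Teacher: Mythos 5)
Your argument is correct, but it takes a genuinely different route from the paper. You compute $\pi_1(M') \cong F_{2g}$ by van Kampen (removing a fibred solid torus leaves $F_0 \times \mathbf{S}^1$ with group $F_{2g}\times\mathbb{Z}$, and the $(0,1)$-gluing kills the central fibre class), and then invoke the classification of closed orientable $3$-manifolds with free fundamental group, which rests on prime decomposition, the sphere theorem, Grushko's theorem and the Poincar\'e conjecture. The paper instead argues constructively: it exhibits $2g$ disjoint, simultaneously non-separating $2$-spheres in $M'$ by closing off the annuli $[x_i,y_i]\times\mathbf{S}^1$ (lying over arcs in the $4g$-gon) with meridian discs of the newly glued solid torus, and then cuts along all of them to obtain $2g$ pieces of type $(\mathbf{S}^2\times\mathbf{S}^1)\setminus\mathbf{D}^3$ together with a $2g$-punctured $3$-sphere, from which the connected-sum decomposition is read off directly. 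Your approach buys brevity and a computation that is robust against errors in the surgery framing (only the meridian image matters), at the cost of importing heavy machinery; the paper's approach is elementary and self-contained, and fits the explicitly combinatorial/geometric spirit of the rest of the article, at the cost of having to verify that the spheres really are non-separating (which the paper does by noting all corners of the $4g$-gon are identified to one point that remains reachable from every complementary piece). Your identification of the surgery convention as the delicate point is apt, and your $g=0$ sanity check resolves it correctly.
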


\begin{proof}
We start by representing $M$ as a product of a $4g$-gon with opposite edges
identified and a circle $\mathbf{S}^1$. Let $T_1 \subset M$ be a solid torus
$T_1 = \mathbf{D}^2 \times \mathbf{S}^1$ inside $M$ where the first factor
$\mathbf{D}^2$ is a disc inside the $4g$-gon and the second factor 
$\mathbf{S}^1$ is a copy of the fibre of $M$; see Figure~\ref{fig:4ggon}
for a picture of $M \setminus T_1$.
Now let $T_2 = \mathbf{S}^1 \times \mathbf{D}^2$ be a solid torus, and let
$M' = ( M \setminus T_1 ) \cup T_2$ such that the $\mathbf{S}^1$-factor of $T_2$ is 
glued to the boundary of the $\mathbf{D}^2$-factor of $T_1$, and the boundary of
the $\mathbf{D}^2$-factor of $T_2$ is glued to a copy of the $\mathbf{S}^1$-factor of $T_1$
on the boundary of $M \setminus T_1$. In other words, $M'$ is obtained by
performing surgery in $M$ of type $(0,1)$ along the fibre. Denote by
$i : T_2 \to M'$ the embedding of $T_2$ into $M'$ defined by this surgery. 
\begin{figure}
	\begin{center}
		\includegraphics[width=0.7\textwidth]{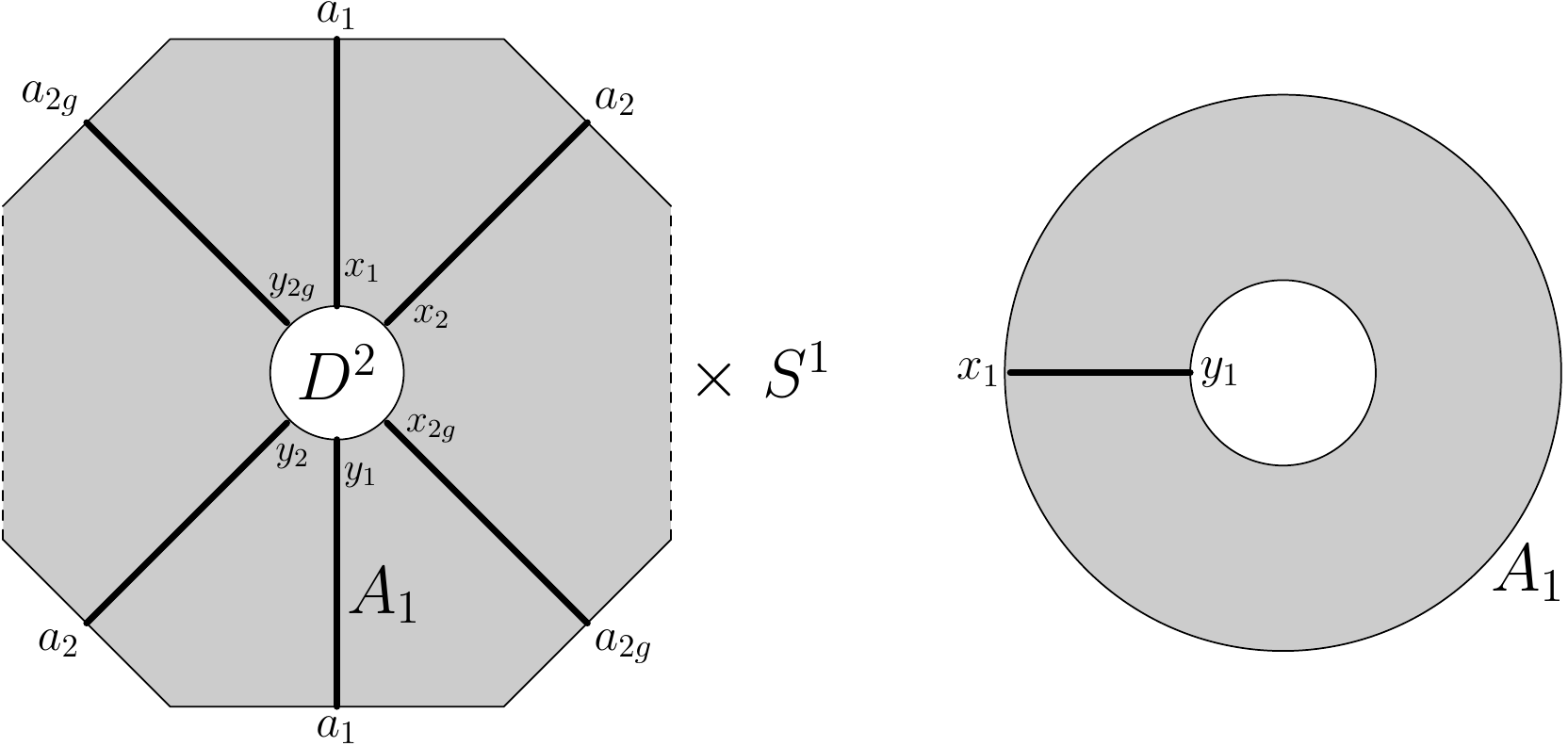} 
		\caption{On the left: the trivial circle bundle over a punctured orientable
      genus $g$ surface $M \setminus T_1$. On the right: a fibre cross interval 
      of $M\setminus T_1$. \label{fig:4ggon}}
	\end{center}
\end{figure}

It follows that there exist $4g$ disjoint disks of type 
$\{ x_i \} \times \mathbf{D}^2$ and $\{ y_i \} \times \mathbf{D}^2$,
$1 \leq i \leq 2g$, with $x_i$, $y_i$ as indicated in Figure~\ref{fig:4ggon},
which close off the $2g$ disjoint annuli $A_i = [ x_i , y_i ] \times \mathbf{S}^1$
inside $M'$, yielding $2g$ (simultaneously) non-separating disjoint 
two-spheres inside $M'$. To see why the spheres are non-separating, note that 
all corners of the $4g$-gon are identified in $M'$ and every piece of $M'$ 
after cutting out the spheres is still connected to one of the corners.

Now cutting along all $2g$ of these $2$-spheres yields $2g$ pieces of type 
$( \mathbf{S}^2 \times \mathbf{S}^1 ) \setminus \mathbf{D}^3$ and
a $3$-sphere with $2g$ punctures. Hence $M'$ is homeomorphic to a connected
sum of type $( \mathbf{S}^2 \times \mathbf{S}^1 )^{\# 2g}$.
\end{proof}

With these building blocks in mind we can now finish the proof of Theorem \ref{thm:brieskornSpheres}.

\begin{proof}[Proof of Theorem \ref{thm:brieskornSpheres}]
  Let $r>0$. First of all, by Lemma \ref{lem:isomorphic} the topological type of 
  $\operatorname{M}(p,q,r)$ as stated in
	Theorem \ref{thm:brieskornSpheres} is unique and thus well-defined.

	\medskip
	Now, by Lemma \ref{lem:Bpqr} we know that a thickened version $\mathscr{B}$ 
  of $\operatorname{B}(p,q,r)$ is homeomorphic
	to $\mathbf{S}^1 \times \mathbb{S}_{\frac12 (a-1)(b-1)}^{b+a+1}$. We construct 
  $\mathscr{B}$ by gluing a small neighbourhood
	of the boundary of each of the $b+a+1$ connected components of 
  $\operatorname{F}_i(p,q,r)$, $1 \leq i \leq 3$, to $\operatorname{B}(p,q,r)$.
	This results in the space $\mathbf{S}^1 \times \mathbb{S}_{\frac12 (a-1)(b-1)}^{b+a+1}$ 
  as, by Lemma \ref{lem:fibresMerDiscs} each of these components 
	is a solid torus.

	By Lemma \ref{lem:bdryCurves}, the boundary curves of the meridian discs 
  of these solid tori are of type 
	\begin{itemize}
		\item $(-\frac{p}{a}, b_1)$ for the $b$ exceptional fibres of 
      $\operatorname{F}_1(p,q,r)$ for $b_1 = \frac{ab}{n}(p \gamma  - 
      \frac{p \alpha(\beta-1)}{ab}- k)$,
		\item $(\frac{q}{b}, b_2)$ for the $a$ exceptional fibres of 
       $\operatorname{F}_2(p,q,r)$ for $b_2 = \frac{ab}{n}(q \gamma  - 
      \frac{q \alpha \beta}{ab}- m)$, and
		\item $(-\frac{r}{ab}, b_3)$ for the exceptional fibre of 
      $\operatorname{F}_3(p,q,r)$ for $b_3 = \frac{ab}{n}(2-
      \frac{\alpha r}{ab})$.
	\end{itemize}
	
	Note that the number of exceptional fibres is correct and 
	changing the signs of the indices in the horizontal direction of all 
	exceptional fibres simultaneously results in the desired values $p/a$, $-q/b$ and $r/(ab)$
	but only reverses the orientation of the Seifert fibration. Thus it remains to show that 
	$$\begin{array}{rcl}
	(\frac{b_1}{p} - \frac{b_2}{q} + \frac{b_3}{r}) \frac{pqr}{ab} &=& \frac{qrb_1 - prb_2 + pqb_3}{ab} \\
	&=& \frac{qr(\frac{ab}{n}(p \gamma  - \frac{p \alpha(\beta-1)}{ab}- k)) - pr(\frac{ab}{n}(q \gamma  - \frac{q \alpha \beta}{ab}- m)) + pq(\frac{ab}{n}(2-\frac{\alpha r}{ab}))}{ab} \\
	&=& \frac{qr(p \gamma  - \frac{p \alpha(\beta-1)}{ab}- k) - pr(q \gamma  - \frac{q \alpha \beta}{ab}- m) + pq(2-\frac{\alpha r}{ab})}{n} \\
	&=& \frac{pqr(\gamma  - \gamma + \frac{q \alpha \beta}{ab} - \frac{q \alpha \beta}{ab} + \frac{\alpha}{ab} - \frac{\alpha}{ab}) + rmp - rkq + 2pq)}{n} \\
	&=& \frac{r(mp - kq) + 2pq)}{n} \\
	&=& 1 , \\
	\end{array}$$
	which proves Theorem \ref{thm:brieskornSpheres} for $r>0$.

  \medskip
	Let $r=0$. By Lemma~\ref{lem:Bpq0},
  $\operatorname{M}(p,q,0)$ can be obtained from the Cartesian product 
  $\mathbf{S}^1 \times \mathbb{S}_g^0$, $g={\frac12 (p-1)(q-1)}$, by 
  performing $p+q+1$ surgeries along the $\mathbf{S}^1$ component. By 
  Lemma~\ref{lem:bdryCurves}, all but one of these surgeries are of trivial type 
  $(\pm 1, 0)$ and do not change the topology. Hence $\operatorname{M}(p,q,0)$
  is obtained from $\mathbf{S}^1 \times \mathbb{S}_g^0$ by a single surgery 
  along $\mathbf{S}^1$ of type
  $(0,1)$, i.e., by drilling out a solid
  torus along the $\mathbf{S}^1$ component and gluing it back in with 
  meridian and longitude interchanged. By Lemma~\ref{lem:r0}
  we thus have
  $$ \operatorname{M}(p,q,0) = (\mathbf{S}^2 \times \mathbf{S}^1)^{\#(p-1)(q-1)}. $$
\end{proof}

\section{Acknowledgements}

This work was supported by the Australian Research Council under the 
Discovery Projects funding scheme, project DP1094516. Furthermore, 
the authors want to thank Wolfgang K\"uhnel and the anonymous referee 
for many valuable comments.

	{\footnotesize
	 \bibliographystyle{abbrv}
	 \bibliography{/home/jonathan/bibliography/bibliography}

\begin{thebibliography}{10}

\bibitem{Banchoff67CritPntCurvEmbPoly}
T.~Banchoff.
\newblock {C}ritical points and curvature for embedded polyhedra.
\newblock {\em J. Differential Geom.}, 1:245--256, 1967.

\bibitem{Banchoff83CritPtsCurvEmbPolyhII}
T.~F. Banchoff.
\newblock {C}ritical points and curvature for embedded polyhedra. {II}.
\newblock In {\em Differential geometry ({C}ollege {P}ark, {M}d., 1981/1982)},
  volume~32 of {\em Progr. Math.}, pages 34--55. Birkh\"auser Boston, Boston,
  MA, 1983.

\bibitem{Bjoerner00SimplMnfBistellarFlips}
A.~Bj{\"o}rner and F.~H. Lutz.
\newblock {S}implicial manifolds, bistellar flips and a 16-vertex triangulation
  of the {P}oincar{\'e} homology 3-sphere.
\newblock {\em Experiment. Math.}, 9(2):275--289, 2000.

\bibitem{Brehm09LatticeTrigE33Torus}
U.~Brehm and W.~K{\"u}hnel.
\newblock Lattice triangulations of {$E^3$} and of the $3$-torus.
\newblock {\em {Israel J. Math.}}, 189:97--133, 2012.

\bibitem{Burton12CompTopWRegina}
B.~Burton.
\newblock Computational topology with regina: Algorithms, heuristics and
  implementations.
\newblock In {\em Geometry \& Topology Down Under}, pages 195--224. American
  Mathematical Society, 2012.

\bibitem{Burton09Regina}
B.~Burton, R.~Budney, W.~Pettersson, et~al.
\newblock Regina: normal surface and 3-manifold topology software, version
  4.95.
\newblock {\tt http://\allowbreak regina.\allowbreak sourceforge.\allowbreak
  net/}, 1999--2013.

\bibitem{Caratheodory07CyclicPolytopes}
C.~Carath{\'e}odory.
\newblock \"{U}ber den {V}ariabilit\"atsbereich der {K}oeffizienten von
  {P}otenzreihen, die gegebene {W}erte nicht annehmen.
\newblock {\em Math. Ann.}, 64(1):95--115, 1907.

\bibitem{simpcomp}
F.~Effenberger and J.~Spreer.
\newblock simpcomp - a {GAP} package, {V}ersion 2.0.0.
\newblock \url{https://code.google.com/p/simpcomp}, 2009--2014.

\bibitem{simpcompISSAC}
F.~Effenberger and J.~Spreer.
\newblock simpcomp - a {GAP} toolbox for simplicial complexes.
\newblock {\em ACM Communications in Computer Algebra}, 44(4):186 -- 189, 2010.

\bibitem{simpcompISSAC11}
F.~Effenberger and J.~Spreer.
\newblock Simplicial blowups and discrete normal surfaces in the {GAP} package
  simpcomp.
\newblock {\em ACM Communications in Computer Algebra}, 45(3):173 -- 176, 2011.

\bibitem{Gale63EvenessCondition}
D.~Gale.
\newblock Neighborly and cyclic polytopes.
\newblock In {\em Proc. {S}ympos. {P}ure {M}ath., {V}ol. {VII}}, pages
  225--232. Amer. Math. Soc., Providence, R.I., 1963.

\bibitem{Jaco78JSJDec}
W.~H. Jaco and P.~B. Shalen.
\newblock Seifert fibered spaces in {$3$}-manifolds.
\newblock {\em Mem. Amer. Math. Soc.}, 21(220):viii+192, 1979.

\bibitem{Johannson79JSJDec}
K.~Johannson.
\newblock {\em Homotopy Equivalences of {$3$}-Manifolds with Boundaries},
  volume 761 of {\em Lecture Notes in Mathematics}.
\newblock Springer, Berlin, 1979.

\bibitem{Koo03FiniteOrderIntegerMatrices}
R.~Koo.
\newblock A {C}lassification of {M}atrices of {F}inite {O}rder over {C}, {R}
  and {Q}.
\newblock {\em Math. Mag.}, 76(2):143--148, 2003.

\bibitem{Kuehnel95TightPolySubm}
W.~K{\"u}hnel.
\newblock {\em {T}ight polyhedral submanifolds and tight triangulations},
  volume 1612 of {\em Lecture Notes in Math.}
\newblock Springer-Verlag, Berlin, 1995.

\bibitem{Kuehnel85NeighbComb3MfldsDihedralAutGroup}
W.~K{\"u}hnel and G.~Lassmann.
\newblock {N}eighborly combinatorial {$3$}-manifolds with dihedral automorphism
  group.
\newblock {\em Israel J. Math.}, 52(1-2):147--166, 1985.

\bibitem{Kuehnel96PermDiffCyc}
W.~K{\"u}hnel and G.~Lassmann.
\newblock {P}ermuted difference cycles and triangulated sphere bundles.
\newblock {\em Discrete Math.}, 162(1-3):215--227, 1996.

\bibitem{Kuiper71MorseRelations}
N.~H. Kuiper.
\newblock {M}orse relations for curvature and tightness.
\newblock In {\em Proceedings of {L}iverpool {S}ingularities {S}ymposium, {II}
  (1969/1970)}, volume 209 of {\em Lecture Notes in Math.}, pages 77--89,
  Berlin, 1971.

\bibitem{Lickorish}
W.~B.~R. Lickorish.
\newblock {\em {A}n introduction to knot theory}, volume 175 of {\em Graduate
  Texts in Mathematics}.
\newblock Springer-Verlag, New York, 1997.

\bibitem{Lutz08ManifoldPage}
F.~H. Lutz.
\newblock {T}he {M}anifold {P}age.
\newblock {\url{http://page.math.tu-berlin.de/~lutz/stellar/}}.

\bibitem{Lutz03TrigMnfFewVertVertTrans}
F.~H. Lutz.
\newblock {\em {T}riangulated manifolds with few vertices and vertex-transitive
  group actions}.
\newblock PhD thesis, TU Berlin, Aachen, 1999.

\bibitem{Lutz09EquivdCovTrigsSurf}
F.~H. Lutz.
\newblock {E}quivelar and $d$-covered triangulations of surfaces. {II}.
  {C}yclic triangulations and tessellations.
\newblock {\tt arXiv:1001.2779v1 [math.CO]}, 2010.
\newblock {To appear in Contrib. Discr. Math.}

\bibitem{Moise51TrigOf3Mflds}
E.~E. Moise.
\newblock {A}ffine structures in {$3$}-manifolds. {V}. {T}he triangulation
  theorem and {H}auptvermutung.
\newblock {\em Ann. of Math. (2)}, 56:96--114, 1952.

\bibitem{Orlik72SeifertMflds}
P.~Orlik.
\newblock {\em Seifert manifolds}.
\newblock Lecture Notes in Mathematics, Vol. 291. Springer-Verlag, Berlin,
  1972.

\bibitem{Spreer10Diss}
J.~Spreer.
\newblock {\em Blowups, slicings and permutation groups in combinatorial
  topology}.
\newblock PhD thesis, University of Stuttgart, 2011.
\newblock Ph.D. thesis.

\bibitem{Spreer14CyclicCombMflds}
J.~Spreer.
\newblock Combinatorial 3-manifolds with a transitive cyclic automorphism
  group, 2014.
\newblock To appear in Disc. Comput. Geom.

\end{thebibliography}
	}

\bigskip
\noindent
Benjamin A.~Burton \\
School of Mathematics and Physics, The University of Queensland \\
Brisbane QLD 4072, Australia \\
(bab@maths.uq.edu.au)

\bigskip
\noindent
Jonathan Spreer \\
School of Mathematics and Physics, The University of Queensland \\
Brisbane QLD 4072, Australia \\
(j.spreer@uq.edu.au)

\end{document}